\newcommand\remove[1]{}
\theoremstyle{plain}
\newtheorem{lemma}{Lemma}[section]
\newtheorem{corollary}[lemma]{Corollary}
\newtheorem{theorem}[lemma]{Theorem}
\newtheorem{conjecture}[lemma]{Conjecture}
\theoremstyle{definition}
\newtheorem{definition}[lemma]{Definition}
\newtheorem{example}[lemma]{Example}
\newcommand\C{\mathbb{C}}
\renewcommand{\sf}[1]{\textup{\textbf{#1}}}
\newcommand{\wt}{\widetilde}
\newcommand{\lc}{\left\lceil}
\newcommand{\rc}{\right\rceil}
\newcommand{\lf}{\left\lfloor}
\newcommand{\rf}{\right\rfloor}
\newif\ifrandom
\author{Meghal Gupta}
\begin{document}
%\pagenumbering{gobble}

\title{A formula for $F$-Polynomials in terms of $C$-Vectors and Stabilization of $F$-Polynomials}

\clearpage\maketitle

\thispagestyle{empty}

\begin{abstract}
Given a quiver associated to a cluster algebra and a sequence of vertices, iterative mutation leads to $F$-Polynomials which appear in numerous places in the cluster algebraic literature. The coefficients of the monomials in these $F$-Polynomials are difficult to understand and have been an area of study for many years. In this paper, we present a general closed-form formula for these coefficients in terms of elementary manipulations with $C$-matrices. We then demonstrate the effectiveness of the formula by using it to derive simple explicit formulas for $F$-Polynomials of specific classes of quivers and mutation sequences. Work has been done to do these cases in ad-hoc combinatorial ways, but our formula recovers and improves known formulas with a general method. 

Secondly, we investigate convergence of $F$-polynomials. In themselves, they do not converge, but by changing bases using the $C$-matrix, they conjecturally do. Since our formula relates $C$-matrix entries to coefficients, we are able to apply it to make considerable progress on the conjecture. Specifically, we show stability for green mutation sequences. Finally, we look at exact formulas for these stable deformed $F$-polynomials in instances where they illustrate properties that the $F$-polynomials themselves do not.
\end{abstract}

\section{Introduction}
Cluster algebras, introduced by Fomin and Zelevinsky in 2002 \cite{FZ1}, are commutative algebras with a distinguished set of generators, known as cluster variables. Their original motivation came from an algebraic framework for total positivity and canonical bases in Lie Theory. In recent years, connections have been made to other areas, such as physics, quiver representations, Calabi-Yau categories, and Poisson geometry.

The cluster variables are grouped into sets of constant cardinality $v$, the \textbf{clusters}, and the integer $v$ is called
the rank of the cluster algebra. Start with an initial cluster $\sf{x}$ (together with a skew symmetrizable integer $v\times v$ matrix $B$ and a coefficient vector $\sf{p} = \left<(p_1^+,p_1^-),(p_2^+,p_2^-),\ldots,(p_v^+,p_v^-)\right>$ whose entries are elements of a torsion-free abelian group. We denote the entry in the $k$th row and $i$th column of $B$ as $B[k,i]$. The set of cluster variables is obtained by repeated application of so called \textbf{mutations}. To perform a mutation at $k$, replace $x_k$ in the cluster with $$x'_k=\frac{1}{x_k}\left(p_k^{+}\prod_{B[k,i]>0}x_i^{B[k,i]}+p_k^{-}\prod_{B[k,i]<0}x_i^{-B[k,i]}\right)$$ This creates a new cluster variable. By a result of Fomin and Zelevinsky \cite{FZ1} known as the Laurent phenomenon, this new cluster variable will always be a Laurent polynomial. There is also a corresponding change to the matrix $B$ and vector $p$, which will be defined properly in the next section. When one sets each initial $x_i$ to 1 and initial $p$ vector as $\left<y_1,\ldots,y_v\right>$, each cluster variable is a polynomial with indeterminates $\left<y_1,\ldots,y_v\right>$. These polynomials are called $F$-polynomials. It turns out that the coefficients of monomials in $F$-polynomials are the same as those we would have gotten by setting each initial $y_i$ to 1 and leaving the initial $x$ vector as indeterminates $\left<x_1,\ldots,x_v\right>$. As such, results about coefficients of $F$-polynomials also apply to cluster variables.

$F$-Polynomials have been an interesting area of study since the inception of the field of cluster algebras. Fomin and Zelevinsky conjectured in \cite{FZ1} that every $F$-polynomial has all positive coefficients. This conjecture was later proven in the skew-symmetric case in \cite{LS2} and then the entire skew-symmetrizable case in \cite{GHKK}. Nonetheless, we are far from completely understanding them. For one, not very many explicit formulas exist for the coefficients of $F$-polynomials. Since they are defined through recurrence relations, it is not clear that we can actually get a closed form formulation instead. The main result of our paper is to derive a closed form based only on the $C$-matrix entries, which are related to the $p$-vectors defined above. Some of the most general previous results regarding $F$-polynomial explicit forms are the following:
\begin{itemize}
\item A formula in terms of the Euler-Poincare characteristic of quiver Grassmannians obtained in \cite{DWZ}.
\item A combinatorial interpretation for cluster algebras from surfaces given in \cite{MSW}.
\item A formula for cluster variables corresponding to string modules as a product of 2 $\times$ 2 matrices obtained in \cite{ADSS}.
\item A formula using broken lines and reflections in \cite{GHKK}.
\end{itemize}
The last result is the only one that is completely general; the others are restrictive in the cluster algebras they account for. The first is mostly general, restricting only in that it only applies to skew-symmetric cluster algebras. However, the first and last have limited computational power, as the coefficients involve objects that are hard to understand and compute.

As said earlier, the main result of this paper is to present a formula for the $F$-polynomial that is completely general and completely explicit. This is in itself interesting, because it shows that the recurrence relation for $F$-polynomials can be solved out. Moreover, we show that our formula is useful. We demonstrate that this formula has computational power, meaning its terms are easy to understand, unlike the formula using the Euler-Poincare characteristic of quiver Grassmannians. We demonstrate this by using it to easily obtain algebraic formulas for $F$-polynomials of specific classes of quivers, that are as simple or simpler than previously known. Though our formula is general, our examples and applications of it restrict to the skew-symmetric case, because that is where patterns are most evident. Furthermore, the formula has an application to geometry in this case, where the cluster algebra can be interpreted with quivers. The formula for the $F$-polynomial in terms of the Euler-Poincare characteristic of quiver Grassmannians mentioned earlier in \cite{DWZ} says the following: 
$$F_n = \sum_e \chi(Gr_e(\phi_n))\prod_{i=1}^v y_i^{e_i}$$

Here, $e$ is a vector in $\mathbb{N}^v$ and $\phi_n$ is a representation of the original quiver defined by certain mutation rules. As such, our expression for the coefficient of a given monomial in an $F$-Polynomial also gives a formula for the corresponding Euler-Poincare characteristic of quiver Grassmannians associated to dimension vectors parameterized by $e$. This is interesting, because these are often very difficult to understand and compute. Finally, it is well known that $F$-polynomial coefficients for specific quivers represent various combinatorial objects; for example pyramid partitions \cite{EF}, T-paths in surfaces with marked points \cite{ST}, and snake graphs \cite{MS}. Therefore, our $F$-polynomial formula also provides a way to count these combinatorial objects.

Secondly, we use our formula to make progress on a conjecture made by Eager and Franco in Section 9.5 of \cite{EF}. They explore convergence of $F$-polynomials in the skew symmetric case. In themselves, $F$-polynomials do not converge, but when one changes basis; i.e. acts with an appropriate linear operator on the exponent vector of each monomial, they appear to converge. This phenomenon and precise linear operator was conjectured in \cite{EF} and formalized by Grace Zhang in \cite{Z}. This bears a striking resemblance to our formula for the coefficients of $F$-Polynomials, and we use our results to prove Eager and Franco's conjecture for a certain class of quivers known as green quivers. This includes the quiver they originally conjectured stabilization for, the $dP1$ quiver, known in combinatorics for giving rise to the Somos-4 sequence. This also extends results of \cite{Z}, which prove stabilization for some specific green quivers.

\section{Definitions}
We start with the general definitions, and later state the simplified definitions for the skew-symmetric case, since that is the case we study most closely. The way we define $F$-polynomials is purely combinatorial; we do not use the cluster algebra notation directly. In both cases, we would start with a matrix B and mutation sequence $v_1,\ldots,v_n$ to obtain the $F$-polynomial $F_n$. In our definitions, the matrix $B$ is essentially replaced by the notion of a generalized quiver defined below in Section \ref{generaldefs}. An interested reader should look at \cite{FZ1} and \cite{FZ4} to understand why the cluster algebra definition of an $F$-polynomial is the same.

Furthermore, the reader interested in only the skew-symmetric case can ignore the concept of ``attached'' edges, and instead look at Section \ref{simpledefs} for the quiver and mutation definitions. From there, the reader can come back to this section, and ignore any occurrence of the word ``attached'' and assume that all edges are attached to both incident vertices. In this case, we also have the matrix $A$ is the same as $E$ and $C$ the same as $D$.

\subsection{General Definitions} \label{generaldefs}
\begin{definition}[Generalized Quiver]
A \textbf{generalized quiver} $Q$ is a directed graph on vertices $1,\ldots,|Q|$, where every edge is ``attached'' to one of its incident vertices, along with a Laurent polynomial label at every vertex. The number of vertices $|Q|$ will be denoted $v$. However the edges must satisfy certain properties:
\begin{itemize}
\item There are no 2-cycles involving edges both attached to the same vertex.
\item When written in a matrix $B$ with entry $B[i,j]$ being the number of edges attached to $j$ going from $j \to i$ (negative if edges go in), $B$ is skew-symmetrizable.
\end{itemize}
\end{definition}

Intuitively, a generalized quiver is a generalized version of a quiver, reflecting the fact that the ``adjacency matrix'' $B$ need not be skew-symmetric.

\begin{definition}[Generalized Quiver Mutation]
An \textbf{generalized quiver mutation} at vertex $i$ is defined by changing both the quiver around vertex $i$ and the Laurent polynomial at vertex $i$. The quiver changes are defined as follows.
\begin{itemize}
\item For every path $j \to i \to k$ with the first edge attached to $i$ and the second to $k$, draw a new edge from $j$ to $k$ attached to $k$.
\item For every path $j \to i \to k$ the first edge attached to $j$ and the second to $i$, draw a new edge from $j$ to $k$ attached to $k$. Reverse the direction of every edge pointing with $i$ as a vertex.
\item Delete all 2-cycles involving edges both attached to the same vertex.
\end{itemize}
The Laurent polynomial is replaced as follows. Let $V_k$ denote the Laurent polynomial currently on vertex $k$. Replace $V_i$ with the following function $P$. $$P=\frac{\prod_{j \in S_1}V_j + \prod_{j \in S_2}V_j}{V_i}$$ where $S_1$ is the set of vertices with multiplicity pointing into $i$ attached to $i$ and $S_2$ is the set of vertices with multiplicity of edges pointing out of $i$ attached to $i$.
\end{definition}

Notice that the vertex $i$ may no longer have a Laurent polynomial label. However, the following theorem by Fomin and Zelevinsky rectifies that.

\begin{theorem}[Laurent Phenomenon \cite{FZ1}]
Take a generalized quiver $Q$ with starting polynomials $x_1,\ldots,x_v$ at each corresponding vertex. For any mutation sequence, the function $P$ as defined above is actually a Laurent polynomial.
\end{theorem}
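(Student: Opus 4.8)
The plan is to prove the statement by induction on the length $n$ of the mutation sequence, following the original strategy of Fomin and Zelevinsky. Write $\mathbb{Z}[x^{\pm}] = \mathbb{Z}[x_1^{\pm 1}, \ldots, x_v^{\pm 1}]$ for the ring of Laurent polynomials in the initial labels. The claim to establish is that the label $V_k$ sitting at every vertex $k$, after any mutation sequence, lies in $\mathbb{Z}[x^{\pm}]$. The base case $n = 0$ is immediate, since each $V_k = x_k$. For the inductive step, suppose the labels $V_1, \ldots, V_v$ obtained after $n-1$ mutations are all Laurent, and we mutate at a vertex $i$. The new label is $P = (M_1 + M_2)/V_i$, where $M_1 = \prod_{j \in S_1} V_j$ and $M_2 = \prod_{j \in S_2} V_j$ are products of Laurent polynomials and hence themselves Laurent. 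Because $V_i$ is a Laurent polynomial rather than a monomial, the one thing that must be checked is the divisibility $V_i \mid (M_1 + M_2)$ inside $\mathbb{Z}[x^{\pm}]$.

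First I would reduce this divisibility to a two-step statement. The label $V_i$ was itself created by an earlier mutation at $i$ (or equals $x_i$); let $V_i W = N_1 + N_2$ be that earlier exchange, where $W$ is the label that $V_i$ replaced. The combinatorial mutation rules, together with the no-2-cycle condition and the skew-symmetrizability of $B$, guarantee that mutating twice at the same vertex is an involution and that consecutive clusters overlap in all but one label. I would package this into a \emph{caterpillar}-type lemma: along the path in the mutation tree from the initial seed to the current seed, I track a pair of adjacent clusters and rewrite the exchange relation at $i$ over their common Laurent ring, which permits intervening mutations at other vertices. By the inductive hypothesis every label in that local picture is already Laurent, so it suffices to verify the divisibility there.

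The main obstacle, and the genuine content of the theorem, is the coprimality step. I expect to show that, after the appropriate substitutions, the two monomials $M_1$ and $M_2$ are coprime as elements of $\mathbb{Z}[x^{\pm}]$ and that $V_i$ shares no common factor with $M_1$. Granting coprimality, one argues that $M_1 + M_2$ vanishes modulo each irreducible factor of $V_i$: imposing the earlier relation $V_i = 0$, i.e. $N_1 = -N_2$, and using that the labels indexed by $S_1$ and $S_2$ are governed by the quiver configuration around $i$, forces the numerator to vanish whenever $V_i$ does, to the correct multiplicity. This is exactly where the no-2-cycle hypothesis is essential, since a 2-cycle attached to $i$ would inject a spurious common factor and break the cancellation. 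Establishing coprimality in full generality is the delicate part; I would prove it by tracking the extremal monomials of the relevant labels with respect to a suitable term order, a standard but technical computation.

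Finally, I would assemble the pieces: the divisibility $V_i \mid (M_1 + M_2)$ yields $P \in \mathbb{Z}[x^{\pm}]$, completing the inductive step, so $P$ is a Laurent polynomial for every mutation sequence. The skew-symmetrizable case demands no change beyond replacing the quiver bookkeeping with the matrix mutation of $B$, since the coprimality and divisibility arguments are insensitive to symmetrizability.
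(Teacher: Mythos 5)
The paper does not prove this statement at all: it is quoted as an external result with a citation to Fomin--Zelevinsky, so there is no internal proof to compare yours against. Judged on its own terms, your proposal correctly identifies the standard strategy (induction on the mutation sequence, reduction to a divisibility $V_i \mid (M_1+M_2)$ in the Laurent ring, a caterpillar-type lemma to localize the check to a few adjacent seeds, and a coprimality argument), but it does not actually carry out the step that constitutes the entire mathematical content of the theorem. You write that ``establishing coprimality in full generality is the delicate part; I would prove it by tracking the extremal monomials \ldots a standard but technical computation.'' That is precisely the part that cannot be waved through: without it, the claim that $M_1+M_2$ vanishes modulo each irreducible factor of $V_i$ ``to the correct multiplicity'' is unsupported, and the induction does not close. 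In particular, $V_i$ need not be squarefree a priori, so the multiplicity issue is not cosmetic, and the substitution $N_1=-N_2$ must be justified as an identity in the quotient ring by each prime factor of $V_i$, which is exactly what the coprimality of the relevant exchange binomials is needed for.

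There are also two smaller inaccuracies worth flagging. First, $M_1=\prod_{j\in S_1}V_j$ and $M_2=\prod_{j\in S_2}V_j$ are not monomials in $\mathbb{Z}[x_1^{\pm 1},\ldots,x_v^{\pm 1}]$ (they are monomials only in the \emph{current} cluster variables), so ``the two monomials $M_1$ and $M_2$ are coprime'' conflates two different rings; the coprimality statements in the Fomin--Zelevinsky argument are about exchange binomials viewed in the cluster at a fixed seed of the caterpillar, not about $M_1,M_2$ as elements of the initial Laurent ring. Second, your reduction ``to a two-step statement'' via the exchange that created $V_i$ is too coarse: between that earlier mutation at $i$ and the current one, mutations at other vertices may have changed the sets $S_1,S_2$ and the neighboring labels, which is why the actual argument works with a chain of three or four consecutive seeds in the caterpillar tree and needs Laurentness and coprimality hypotheses at each of them. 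None of this is unfixable --- it is the published proof --- but as written your proposal is an outline of that proof with its crux deferred, not a proof.
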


\begin{definition}[Framed Generalized Quiver]
The \textbf{framed generalized quiver} of a generalized quiver $Q$ with labels (polynomials) $1$ at every vertex is the quiver formed by adding a vertex $i'$ for every vertex $i$ with an edge pointing from $i'$ to $i$ attached to $i$. The new vertices $i'$ have the label $y_i$. A \textbf{base vertex} of the framed generalized quiver is a vertex that is also a vertex of the original quiver. 
\end{definition}

\begin{definition}[$F$-Polynomial]
The $n$th \textbf{$\bm{F}$-Polynomial} of a framed generalized quiver $Q$ accompanied with a mutation sequence $v_1,\ldots,v_n$ of vertices is the polynomial at vertex $v_n$ after performing mutations at $v_1,\ldots,v_n$ sequentially.
\end{definition}

\begin{definition}[$V_{n}(i)$ or $V_i$]
Given a quiver $Q$ and a sequence of mutations $v_1,\ldots,v_n$, $\bm{V_n(i)}$ is the $F$-polynomial on vertex $i$ after the $n$ mutations. We will usually denote this $F$-polynomial more simply as $V_i$ if we are otherwise clear about the termination point $n$ for the sequence of mutations.
\end{definition}

\begin{definition}[$C$-matrix]
The \textbf{$\bm{C}$-matrix} of the framed generalized quiver $Q$ and mutation sequence $v_1,\ldots,v_n$ with $v$ base vertices is the $v \times v$ matrix with entry $(i,j)$ as the number of edges from frozen vertex $i'$ to base vertex $j$ attached to $j$. It is denoted $C_n$.
\end{definition}

\begin{definition}[$A_i$]
Given a framed generalized quiver $Q$ and a sequence of mutations $v_1,\ldots,v_i$, let $\bm{A_i^g}$ be the identity matrix with entry $(v_i,v_i)$ replaced with $-1$ and for every arrow pointing from $v_i$ to another base quiver vertex $u$, add 1 to the entry $(v_i,u)$. $\bm{A_i^r}$ is defined analogously with arrows pointing into $v_i$. We will just refer to the relevant $A_i^g$ or $A_i^r$ by $A_i$, where the relevant one is what corresponds to the color of $r_i$ as will be defined soon. The other one will be $A_i^*$.
\end{definition}

\begin{lemma}[$C$-Matrix Facts, Theorem 5.6 \cite{K}]
Given a framed quiver $Q$ and a sequence of mutations $v_1,\ldots,v_n$, $C_n=A_1A_2\cdots A_n$, where for each $i$ you choose $g$ if all the arrows are pointing into $v_i$ before the $i$th mutation, and $r$ otherwise. Also, $C_n$ is invertible.
\end{lemma}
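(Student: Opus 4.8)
The plan is to argue by induction on the number of mutations $n$, reducing the whole statement to a single clean claim about one mutation step, namely that $C_n = C_{n-1}A_n$. For the base case $n=0$ there are no mutations, so the product $A_1\cdots A_n$ is the empty product, i.e. the identity; on the other side, in the framed quiver every frozen vertex $i'$ has exactly one edge to its base vertex $i$ (attached to $i$) and no edges to any other base vertex, so $C_0 = I$ as well. Thus it suffices to prove the inductive step, and invertibility is then immediate: each $A_i$ differs from the identity only in row $v_i$, with $(v_i,v_i)$-entry equal to $-1$, so in the permutation expansion of $\det A_i$ every row other than $v_i$ is a standard basis vector and forces the identity permutation, giving $\det A_i = -1$ and hence $\det C_n = (-1)^n \neq 0$.

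For the inductive step I would track directly how the mutation at $v_n$ changes the edges counted by the $C$-matrix, that is, the edges between frozen vertices $i'$ and base vertices, recorded with sign for orientation in the columns of $C$. Right-multiplication by $A_n$ is exactly a column operation: using that $A_n$ is the identity outside row $v_n$, one computes $(C_{n-1}A_n)[i,v_n] = -C_{n-1}[i,v_n]$ and, for $u \neq v_n$, $(C_{n-1}A_n)[i,u] = C_{n-1}[i,u] + C_{n-1}[i,v_n]\,A_n[v_n,u]$, so right-multiplication negates the $v_n$-th column and adds to each remaining column $u$ the old $v_n$-th column weighted by the number of base-vertex arrows at $v_n$ recorded in row $v_n$ of $A_n$. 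The content of the step is therefore to check that the three combinatorial mutation rules (creating new edges along paths $j\to v_n \to k$, reversing the edges at $v_n$, and cancelling $2$-cycles) produce precisely this transformation on the frozen-to-base edges. Because frozen vertices are never mutated, the only edges into a base vertex $u$ that are newly created or destroyed are those arising from paths through $v_n$, and counting them against the attached-edge conventions yields exactly the column update above.

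The case split between $A_n^g$ and $A_n^r$ is dictated by the orientation of the frozen edges at $v_n$ before the mutation, which is exactly the $v_n$-th $c$-vector (the $v_n$-th column of $C_{n-1}$): if all these arrows point into $v_n$ the step is green and one uses the outgoing base arrows from $v_n$, otherwise it is red and one uses the incoming ones. The main obstacle, and the point that makes the dichotomy exhaustive and the matrix $A_n$ so simple, is sign coherence of $c$-vectors: a priori the paths through $v_n$ could create edges of both orientations and force an entry-by-entry sign analysis, but sign coherence guarantees that the entire $v_n$-th column of $C_{n-1}$ is non-negative or non-positive, so a single uniform sign $\varepsilon = \pm 1$ governs the whole step. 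I would therefore either invoke sign coherence as an input (it holds in the skew-symmetrizable generality considered here) or fold its verification into the induction, after which the careful bookkeeping of attached edges in the two cases is all that remains to complete the argument.
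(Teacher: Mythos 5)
Your proposal is essentially correct, but it cannot be compared line-by-line with the paper, because the paper offers no proof of this lemma at all: it is imported as a known fact (Theorem 5.6 of the cited work of Keller, stated there in the language of mutation of $\tilde{B}$-matrices). Your induction is the standard argument behind that citation, and the details check out. The base case $C_0=I$ is right, the determinant computation $\det A_i=-1$ is valid (each row $k\neq v_i$ of $A_i$ is a standard basis vector, so only the identity permutation survives), and the column-operation identity $(C_{n-1}A_n)[i,u]=C_{n-1}[i,u]+C_{n-1}[i,v_n]A_n[v_n,u]$ for $u\neq v_n$, together with $(C_{n-1}A_n)[i,v_n]=-C_{n-1}[i,v_n]$, is exactly what right multiplication by a matrix differing from the identity only in row $v_n$ does. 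Your combinatorial matching is also correct in both colors: in the green case the new frozen-to-base arrows come from paths $i'\to v_n\to u$, contributing $C_{n-1}[i,v_n]$ times the number of arrows $v_n\to u$ (the row of $A_n^g$); in the red case they come from paths $u\to v_n\to i'$, and the two sign flips (arrows now counted as $u\to i'$, and $C_{n-1}[i,v_n]\le 0$) cancel so that the same update formula holds with $A_n^r$. You are also right that sign coherence is the genuine external input making the green/red dichotomy exhaustive; the paper itself only states sign coherence after this lemma, so logically it is being assumed, exactly as you do. Two small points deserve a remark. First, $A_n$ must be read off from the quiver obtained after the first $n-1$ mutations, not from the initial quiver; you use this implicitly and should say it. Second, the paper states the lemma with the $A_i$ (which count all arrows at $v_i$) while the definitions allow generalized quivers with ``attached'' edges, where the bookkeeping of which edges are counted by $C_n$ versus by $A_i$ needs extra care; your argument is complete for the honest skew-symmetric quiver case, which is what the lemma's wording (``framed quiver'') covers and what the rest of the paper uses.
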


\begin{definition}[$E_i$]
Given a framed generalized quiver $Q$ and a sequence of mutations $v_1,\ldots,v_i$, let $\bm{E_i^g}$ be the identity matrix with entry $(v_i,v_i)$ replaced with $-1$ and for every arrow pointing from $v_i$ to another base quiver vertex $u$ attached to $v_i$, add 1 to the entry $(v_i,u)$. $\bm{E_i^r}$ is defined analogously with arrows pointing into $v_i$. We will just refer to the relevant $E_i^g$ or $E_i^r$ by $E_i$, where the relevant one is what corresponds to the color of $r_i$ as will be defined soon. The other one will be $E_i^*$.
\end{definition}

\begin{definition}[$D$-Matrix]
The \textbf{$\bm{D}$-matrix} is similar to the $C$-matrix, defined slightly differently though: $D_n=E_1\cdots E_n$. Note that $D_n$ is also invertible. The $D$-matrix between two steps is defined as follows. For two indices $m$ and $n$, let $D_{m,n}$ be $D_m^{-1}D_n$. By extension, $D_n=D_{0,n}$.
\end{definition}

\begin{definition}[$r_i$] \label{ri}
Given the quiver formed by a framed quiver $Q$ and a mutation sequence $v_1,\ldots,v_i$, let $\bm{r_i}$ be the monomial that is the product of the frozen nodes pointing into/from vertex $v_i$. More precisely, we have $r_i = \prod_{j = 1}^{v} y_j^{\delta C_i[j,v_i]|}$ where $\delta \in \{\pm 1\}$ chosen so that the exponents are nonnegative. The $r_i$ is called green if the frozen nodes are pointing in before the final mutation (out afterwards) and red if they are all pointing out. 
\end{definition}

\begin{theorem}[Sign Coherence \cite{GHKK}]
Given the quiver formed by a framed quiver $Q$ and a mutation sequence $v_1,\ldots,v_i$, all the frozen vertices pointing into any vertex $u$ at the end of the process are in the same direction (all in or all out), and there will be at least one arrow. Thus, every vertex will be green or red.
\end{theorem}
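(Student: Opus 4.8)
The statement is the sign-coherence of $c$-vectors, conjectured by Fomin--Zelevinsky and established in full skew-symmetrizable generality in \cite{GHKK}. In the matrix language set up above it asserts that each column of $C_n$, i.e.\ each $c$-vector $C_n[\cdot,j]$, is a nonzero integer vector whose entries are either all nonnegative or all nonpositive. My plan is to deduce this from the consistency and positivity of the scattering diagram attached to the initial exchange matrix $B$, the route of \cite{GHKK}, which dovetails with the broken-line picture referenced in the introduction.

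First I would fix the skew-symmetrizable $B$ and build the associated scattering diagram $\mathfrak{D}$ in $\R^v$: a locally finite collection of codimension-one rational cones (walls), each carrying a wall-crossing automorphism attached to a primitive normal vector. The foundational input is the Kontsevich--Soibelman/\cite{GHKK} existence theorem, producing a \emph{consistent} diagram in which every wall function is, up to reindexing, a series in a single monomial $z^{m}$ with all coefficients of one sign; concretely each has the shape $\prod(1+z^{m})^{c}$ with $c>0$. This positivity is the crucial feature and is exactly where genuine depth enters.

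Next I would match the combinatorics of mutation with the chambers of $\mathfrak{D}$. The seed produced by $v_1,\dots,v_n$ corresponds to a chamber $\mathcal{C}_n$ of the cluster (g-vector) fan, and the $c$-vectors of that seed are precisely the normals to the walls bounding $\mathcal{C}_n$; tracking the factorization $C_n=A_1\cdots A_n$ across successive wall-crossings recovers these normals from the $A_i$-mutation rule. Because each wall is attached to a single primitive monomial direction and positivity of $\mathfrak{D}$ rules out any cancellation, the normals assembled along $\partial\mathcal{C}_n$ cannot acquire mixed signs: a $c$-vector with both a positive and a negative entry would correspond to a wall slicing through the open chamber, contradicting consistency. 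This gives sign-coherence; the nonvanishing (``at least one arrow'') is immediate since $C_n=A_1\cdots A_n$ is invertible, so no column can be zero.

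The main obstacle is precisely the consistency of $\mathfrak{D}$ together with the positivity of its wall functions; the remainder is bookkeeping that aligns the $A_i$-mutation rule with wall-crossing. I would obtain consistency by the order-by-order Kontsevich--Soibelman construction, inserting walls degree by degree until the path-ordered product around every loop is trivial, and then propagate positivity by induction: if all lower-order wall functions have one-signed coefficients, so do the commutators introduced at the next order, since the relevant bracket structure constants are nonnegative in the skew-symmetrizable setting. In the skew-symmetric case one may bypass scattering diagrams and argue representation-theoretically instead: attach a quiver with potential to $B$, mutate decorated representations in the sense of \cite{DWZ}, and identify each $c$-vector with the dimension vector of an indecomposable (with a single sign recording whether it is a shifted negative simple), for which one-signedness is automatic. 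Either route reduces sign-coherence to a single positivity statement proved once at the level of the diagram or the category.
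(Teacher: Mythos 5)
The paper does not prove this statement at all: it is imported as a black box, cited to \cite{GHKK}, and nothing downstream depends on the internals of its proof. So the honest comparison is between your sketch and the literature, not between your sketch and an argument in the paper. As a roadmap of how sign coherence is actually established, your outline is broadly faithful: the skew-symmetrizable case is handled in \cite{GHKK} via consistent scattering diagrams with positive wall functions, the cluster chambers are identified with seeds, and the $c$-vectors appear as primitive normals to the walls bounding the chamber of the seed; the skew-symmetric case was done earlier via quivers with potentials in \cite{DWZ}, identifying $c$-vectors with (signed) dimension vectors. The ``at least one arrow'' clause from invertibility of $C_n=A_1\cdots A_n$ is correct and is exactly how the paper's earlier $C$-matrix lemma would justify it.

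As a \emph{proof}, however, the proposal has the obvious gap that every load-bearing step is itself a deep theorem left unproved: existence and consistency of $\mathfrak{D}$, positivity of the wall functions, and --- most seriously --- the identification of the chambers reachable from the positive chamber with the seeds of the cluster pattern and of their bounding-wall normals with the columns of $C_n$. That last identification is the hardest part of \cite{GHKK} and is waved through in one sentence (``tracking the factorization $C_n=A_1\cdots A_n$ across successive wall-crossings''). One mechanism is also misstated: sign coherence does not come from a contradiction about a wall ``slicing through the open chamber''; it comes from the fact that, by the very definition of the scattering diagram, every wall lies in $n^\perp$ for $n$ in the positive cone $N^+$, so every wall normal is automatically one-signed, and the only thing to prove is that the $c$-vectors \emph{are} such normals. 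Since the paper itself treats the statement as a citation, none of this affects the paper; but if you intend your text as a proof rather than a pointer to one, it is a summary of \cite{GHKK}'s strategy, not a self-contained argument.
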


\begin{definition}[$\delta_{i,j}$]
Let $\delta_{i,j}$ be 1 if $r_i$ and $r_j$ are the same color, and $-1$ otherwise. Mutation 0 counts as red for the purposes of assigning $\delta_{0,i}$. The sign $\delta_{0,i}$ agrees with the sign $\delta$ from Definition \ref{ri}.
\end{definition}

We call a mutation sequence green if all the mutations are green. We often just say the quiver is green if it is clear what mutation sequence we are referring to.

The next two definitions bare no obvious significance, but simplify expressions in future sections. Loosely, the particular values $a_{i,j}$ and $b_{i,j}$ are important because they are the building blocks of coefficients of monomials in the $F$-polynomials.

\begin{definition}[$a_{i,j}$ and $b_{i,j}$]
Given a framed quiver $Q$ and a mutation sequence $v_1,v_2,\ldots,$, for two indices $i \leq j$, let $\bm{a_{i,j}} = D_{i,j}^{-1}[v_j,v_i]$. Similarly, let $\bm{b_{i,j}} = E^*_jE_jD_{i,j}^{-1}[v_j,v_i]$, or equal to 0 when $i=j$.
\end{definition}

\begin{definition}[$W$]
Given a framed quiver $Q$ and a mutation sequence $v_1,\ldots,v_n$, and a sequence of indices $w_1,\ldots,w_k$ where the $w_i$'s are all between 1 and $n$ inclusive, let $$\bm{W(n, w_1,\ldots,w_k)}=\prod_{i=1}^{k} \left( a_{w_i,n} + \sum_{j=i+1}^{k} (-a_{w_i,w_j}+b_{w_i,w_j})\right).$$
\end{definition}

\begin{definition}[$\phi$]
We define $\bm{\phi(\sf{w})}$ to be the number of distinct permutations of $\sf{w}=w_1,\ldots,w_k$ divided by $k!$. Note that this is just $\frac{1}{\prod{a_i!}}$ where the $a_i$ are the number of occurrences of each of the distinct values taken on by $w_1,\ldots,w_k$.
\end{definition}

\subsection{Simplifications in the Skew-symmetric Case} \label{simpledefs}
In the case where $B$ is skew-symmetric, for any edge between two base vertices, the number of edges attached to each vertex is the same. As such, we can define the quiver, rather than generalized quiver, so that only one set of edges is included, and attached can be disregarded. The analogous definitions are straightforward, but we redefine for completeness. To understand that all proofs completed in the general case also apply here, one should check that all the variables defined, $C$, $A$, $a_{i,j}$, $b_{i,j}$, $r_i$, etc. are all the same when returning the quiver to its generalized quiver form.

\begin{definition}[Quiver]
A \textbf{quiver} $Q$ is a directed graph, along with a Laurent polynomial label at every vertex. The variable $v$ denotes the number of vertices in $Q$, or $|Q|$. The directed graph must have no 2-cycles.
\end{definition}

Any quiver can be viewed, instead, as a generalized quiver by replacing each edge with two edges, one attached to each incident vertex.

\begin{definition}[Quiver Mutation]
An \textbf{quiver mutation} at vertex $i$ is defined by changing both the quiver around vertex $i$ and the Laurent polynomial at vertex $i$. The quiver changes are defined as follows.
\begin{itemize}
\item For every path $j \to i \to k$, draw a new edge from $j$ to $k$.
\item For every path $j \to i \to k$, draw a new edge from $j$ to $k$. Reverse the direction of every edge pointing with $i$ as a vertex.
\item Delete all 2-cycles.
\end{itemize}
The Laurent polynomial is replaced as follows. Let $V_k$ denote the Laurent polynomial currently on vertex $k$. Replace $V_i$ with the following polynomial $P$. $$P=\frac{\prod_{j \in S_1}V_j + \prod_{j \in S_2}V_j}{V_i}$$ where $S_1$ is the multiset of edges pointing into $i$ attached to $i$ and $S_2$ is the multiset of edges pointing out of $i$.
\end{definition}

\begin{definition}[Framed Quiver]
The \textbf{framed quiver} of a quiver $Q$ with labels $1$ at every vertex is the quiver formed by adding a vertex $i'$ for every vertex $i$ with an edge pointing from $i'$ to $i$. The new vertices $i'$ have the label $y_i$. A \textbf{base vertex} of the framed quiver is a quiver that is also a vertex of the original quiver. 
\end{definition}

\begin{definition}[$C$-matrix]
The \textbf{$\bm{C}$-matrix} of the quiver formed by a framed quiver $Q$ and mutation sequence $v_1,\ldots,v_n$ with $v$ base vertices is the $v \times v$ matrix with entry $(i,j)$ as the number of edges from frozen vertex $i'$ to base vertex $j$. It is denoted $C_n$. The $C$-matrix between two steps is defined as follows. For two indices $m$ and $n$, let $\bm{C_{m,n}}$ be $C_m^{-1}C_n$. It is worth noting that $C_n=C_{0,n}$.
\end{definition}

\begin{definition}[$A_i$]
Given a framed quiver $Q$ and a sequence of mutations $v_1,\ldots,v_i$, let $\bm{A_i^g}$ be the identity matrix with entry $(v_i,v_i)$ replaced with $-1$ and for every arrow pointing from $v_i$ to another base quiver vertex $u$, add 1 to the entry $(v_i,u)$. $\bm{A_i^r}$ is defined analogously with arrows pointing into $v_i$. We will just refer to the relevant $A_i^g$ or $A_i^r$ by $A_i$, where the relevant one is what corresponds to the color of $r_i$ as will be defined soon. The other one will be $A_i^*$.
\end{definition}

The $A$-matrix is the same as the $E$-Matrix. Both will be denoted by $A$. The $C$-matrix is the same as the $D$-matrix. Both will be denoted by $C$. Also note the following:

\begin{definition}[$a_{i,j}$ and $b_{i,j}$]
Given a framed quiver $Q$ and a mutation sequence $v_1,v_2,\ldots$, for two indices $i \leq j$, let $\bm{a_{i,j}} = C_{i,j}^{-1}[v_j,v_i]$. Similarly, let, $\bm{b_{i,j}} = A^*_jA_jC_{i,j}^{-1}[v_j,v_i]$, or equal to 0 when $i=j$.
\end{definition}

\begin{definition}[$W$]
Given a framed quiver $Q$ and a mutation sequence $v_1,\ldots,v_n$, and a sequence of indices $w_1,\ldots,w_k$ where the $w_i$'s are all between 1 and $n$ inclusive, let $$\bm{W(n, w_1,\ldots,w_k)}=\prod_{i=1}^{k} \left( a_{w_i,n} + \sum_{j=i+1}^{k} (-a_{w_i,w_j}+b_{w_i,w_j}) \right).$$
\end{definition}

We will use $a_{i,j}$, $b_{i,j}$ and $W$ regardless of whether we are dealing with a quiver or generalized quiver. 

\subsection{Definitions for Stabilization}

\begin{definition}[Matrix Action on Polynomial]
Given an $v\times v$ matrix $M$ and a polynomial $P$ in $x_1,\ldots,x_v$, we let $\bm{M(P)}$ denote the polynomial formed when $M$ acts on the vector of exponents for each monomial in $P$.
\end{definition}

\begin{definition}[Deformed $F$-Polynomial]
The $n$th \textbf{$\bm{S}$-polynomial} (deformed $F$-polynomial) of a framed quiver $Q$ with a mutation sequence $v_1,\ldots,v_n$ is defined as $-C^{-1}(F_n)$.
\end{definition}

\begin{definition}[Fundamental $r_i$]
An $r_i$ of a framed quiver $Q$ and mutation sequence $v_1,v_2,\ldots$ is called \textbf{fundamental} if it is not expressible as the product of at least 2 other $r_j$'s with $j< i$. Let the set $M_n$ denote the set of fundamental $r_i$ for a framed quiver $Q$ and sequence of mutations $v_1,\ldots,v_n$. $M$ is the set of fundamental $r_i$ for the entire mutation sequence $v_1,v_2,\ldots$.
\end{definition}

\begin{definition}[Basic]
Given a framed quiver $Q$ and a mutation sequence $v_1,\ldots,v_n$, a monomial $m$ is called \textbf{basic} if its coefficient is nonzero in a polynomial that appears on one of the vertices after $n$ mutations. In other words, its coefficient is nonzero in some element of the cluster. Let $P_n$ denote the set of basic monomials after $n$ mutations.
\end{definition}

\section{$\bm{F}$-Polynomial Formula}

\subsection{Derivation}

\begin{theorem}\label{formula}
Given a framed quiver or a framed generalized quiver $Q$ and a mutation sequence $v_1,\ldots,v_n$, $F_n$ can be calculated as follows. Let $W$ be the set of (possibly empty) sequences $\sf{w}$ with $1\leq w_1 \leq \ldots \leq w_k\leq n$.
\begin{equation}
F_n = \sum_{\sf{w}\in W} \phi(\sf{w}) W(n, w_1,\ldots,w_k) \prod_{i=1}^{k}r_{w_i}
\end{equation}
Consequently, the coefficient of a given monomial $p$ is the sum of all sequences $1\leq w_1 \leq \ldots\leq w_k \leq n$ so that $$p = \prod_{i=1}^{k}r_{w_i}$$ of $$ \phi(\sf{w}) W(n, w_1,\ldots,w_k).$$ (Recall that $r_{w_i}$ is a monomial in $y_1,\dots,y_v$ as in Definition \ref{ri}.)
\end{theorem}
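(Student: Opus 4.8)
The plan is to prove the formula by induction on the number of mutations $n$, tracking how the $F$-polynomial $F_n = V_n(v_n)$ evolves under a single mutation. The base case $n = 0$ (or $n=1$) should be checked directly: with no mutations, $F_0 = 1$, corresponding to the empty sequence $\sf{w}$ with $\phi = 1$ and the empty product $W = 1$. The heart of the argument is the inductive step, where I would express $F_{n}$ in terms of the polynomials present after $n-1$ mutations via the mutation rule $P = \tfrac{\prod_{j\in S_1}V_j + \prod_{j\in S_2}V_j}{V_{v_n}}$, and then show that substituting the inductive hypothesis for each $V_j$ reorganizes exactly into the claimed sum over sequences $\sf{w}$ with $1 \le w_1 \le \cdots \le w_k \le n$.

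\smallskip
\noindent First I would set up the right inductive statement. Since the formula for $F_n$ references $a_{i,n}$, $b_{i,n}$, and $r_{w_i}$ that all depend on the terminal index $n$, the single-mutation transition changes every one of these quantities. So rather than induct purely on $F_n$, I would prove the stronger statement that \emph{all} the polynomials $V_n(i)$ on the base vertices (not just the one at $v_n$) admit a closed form of this shape, since the mutation rule combines several vertex-polynomials at once. The key bookkeeping is to understand how the weight factor inside $W(n, w_1,\dots,w_k) = \prod_i\big(a_{w_i,n} + \sum_{j>i}(-a_{w_i,w_j}+b_{w_i,w_j})\big)$ transforms when $n$ advances to $n+1$: the term $a_{w_i,n}$ gets replaced by $a_{w_i,n+1}$, and the difference $a_{w_i,n+1}-a_{w_i,n}$ should be expressible through the matrices $A_{n+1}, A_{n+1}^*$ (equivalently $E_{n+1}, E_{n+1}^*$) acting on the appropriate $D$-matrix entry. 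I expect these matrix identities, following from the factorization $C_n = A_1\cdots A_n$ and its $D$-analogue, to be the computational engine of the proof.

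\smallskip
\noindent Next, the main structural step: when I apply the mutation at $v_{n+1}$, the numerator $\prod_{j\in S_1}V_j + \prod_{j\in S_2}V_j$ becomes a product of already-known $F$-polynomials, each a sum over its own sequence $\sf{w}$. Multiplying these sums out produces sequences for the new vertex that are unions of the constituent sequences, with the new index $n+1$ possibly appended (via the fresh frozen arrows, i.e.\ the monomial $r_{n+1}$). I would show that after dividing by $V_{v_{n+1}}$ and collecting terms, the coefficient of a given monomial $\prod_i r_{w_i}$ matches $\phi(\sf{w})\,W(n+1, w_1,\dots,w_k)$. The factor $\phi(\sf{w}) = 1/\prod_\ell a_\ell!$ arises naturally from the multinomial overcounting: a monomial with repeated $r$-values can be built from many orderings of the mutation contributions, and $\phi$ records exactly the ratio of distinct permutations to $k!$, reconciling the product-of-sums expansion with the ordered-sequence indexing $w_1 \le \cdots \le w_k$.

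\smallskip
\noindent \textbf{The hard part} will be verifying that the weight polynomial $W$ transforms correctly under mutation --- specifically, that the interaction terms $-a_{w_i,w_j}+b_{w_i,w_j}$ between earlier indices are \emph{unchanged} by appending $n+1$, while only the ``diagonal'' pieces $a_{w_i,n}$ update to $a_{w_i,n+1}$. This requires a careful analysis of how $D_{i,n+1}^{-1} = D_{n+1}^{-1} D_i$ relates to $D_{i,n}^{-1}$, and how the reading off of the $(v_{n+1}, v_i)$ and $(v_j, v_i)$ entries behaves under left-multiplication by $E_{n+1}^{-1}$. A subtlety is keeping the two colors (green/red) consistent: the choice between $A_i^g/A_i^r$ (and the sign conventions encoded in $\delta_{i,j}$) must line up so that $r_{n+1}$ carries the correct sign and the entries $b_{i,j}$ pick up the correct $E_j^* E_j$ correction. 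I anticipate the cleanest route is to prove the matrix-entry recursions for $a_{i,j}$ and $b_{i,j}$ as standalone lemmas first, reducing the inductive step to a purely formal rearrangement of the power-series-like expansion.
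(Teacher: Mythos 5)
Your proposal correctly identifies the recursion to induct on (the mutation rule $F_{n+1}V_{v_{n+1}} = \prod_{j\in S_1}V_j + r_{n+1}\prod_{j\in S_2}V_j$), the need to strengthen the inductive hypothesis to all vertex polynomials, and the fact that the matrix recursions for $a_{i,j}$ and $b_{i,j}$ coming from $D^{-1}_{i,j}=E_j\cdots E_{i+1}$ are the computational engine. All of that matches the paper. But there is a genuine gap in the plan to carry the inductive hypothesis in the \emph{expanded} form $\sum_{\sf{w}}\phi(\sf{w})W(\cdot)\prod r_{w_i}$ and then ``reorganize'' after one mutation. Two things go wrong. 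First, the mutation rule involves \emph{division} by $V_{v_{n+1}}$, and the quotient of two sums-over-sequences is not manifestly of the same form; there is no term-by-term way to perform that division, so ``collecting terms'' is not an available move. Second, even for the numerator, the product of several such sums contributes to a fixed monomial $\prod_i r_{w_i}$ via a convolution over all ways of distributing the multiset $\{w_i\}$ among the factors, and showing that this convolution collapses to the single weight $\phi(\sf{w})W(n+1,\sf{w})$ is a family of Vandermonde-type identities for generalized binomial coefficients that your outline does not address --- it is essentially equivalent to the theorem itself.

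The paper's proof avoids both problems with an idea that is absent from your proposal: replace each $r_i$ by a formal variable $z_i$, fix a target monomial, set all irrelevant $z_i$ to $0$, and then prove (by induction on the mutation step, exactly the skeleton you propose) that the resulting series is the finite \emph{product} $H_i=\prod_j L_j^{a_{q_j,i}}$, where each $L_j$ is an explicit generalized-binomial series $1+z_{q_j}(\cdots)$. In that multiplicative form, products and quotients of vertex polynomials just add and subtract exponents, so verifying the mutation recurrence reduces to the linear identity $D^{-1}_{q_s,i}[v_i,v_{q_s}]=\sum_{v_i\succ j}D^{-1}_{q_s,i-1}[j,v_{q_s}]-D^{-1}_{q_s,i-1}[v_i,v_{q_s}]$ (the matrix recursion you anticipated), and the coefficient extraction giving $\phi(\sf{w})W(n,\sf{w})$ follows from the extended binomial theorem applied to $\prod_j L_j^{a_{q_j,n}}$. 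Without some analogue of this closed multiplicative form, your inductive step cannot be completed as described.
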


\begin{proof}
It suffices to prove the statement in the case of framed generalized quivers. 

Consider the steps $v_1,\ldots,v_n$ of the mutation process on the quiver $Q$ that results in $F_n$. At each step $i$, define $\succ_i$ and $\prec_i$ as follows. If $r_i$ is green, any edge in the quiver before the $i$th mutation attached to $a$ denoted $a\to b$ is now denoted $a \succ_i b$. Similarly, $a \leftarrow b$ is denoted $a \prec_i b$. If $r_i$ is red, the reverse occurs: $\succ_i$ replaces $\leftarrow$, and $\prec_i$ replaces $\to$. Thus, $\succ$ replaces $\to$ or $\leftarrow$ corresponding to the direction of the frozen vertices into $v_i$. At each step $i$, the following occurs to calculate $F_i$. We have $$F_iV_{v_i}=\prod_{v_i \succ j}V_j+r_i\prod_{v_i \prec j}V_j.$$ We modify this mutation process: instead of plugging in the monomial $r_i$, replace $r_i$ with the variable $z_i$. Define the analog of $F_i$ as $Z_i$ and the analog of $V_j$ as $ZV_j$, so $ZV_j$ is the $Z$ power series at the $j$th vertex. Thus, $Z_n$ will be some rational function in the $z_i$'s, computed by the following recurrence relation: $$Z_iZV_{v_i}=\prod_{v_i \succ j}ZV_j+z_i\prod_{v_i \prec j}ZV_j$$ and $\ldots,F_{-1},F_0=1$. When we plug in $r_i$ for each $z_i$, we get back $F_n$. As such, to get the coefficient of a given monomial $p$, do the following: express $Z_n$ in its MacLaurin series form, i.e. an element of $\C[[z_1,\ldots,z_n]]$. Then, add the coefficients over all sequences $w_1,\ldots,w_k$ so that $p=r_{w_1}\cdots r_{w_k}$.

Now, we compute the coefficient of a given $z_{w_1}\cdots z_{w_k}$. We show it is equal to $$\phi(w_1,\ldots,w_k) W(n, w_1,\ldots,w_k).$$ Encode the sequence $\sf{w}$ instead as two sequences $\sf{q}$ and $\sf{m}$ where $\sf{q}$ consists of the distinct terms of $\sf{w}$ in increasing order, and $\sf{m}$ is the corresponding multiplicity of each. Let $l=|\sf{q}|$. The coefficient of the term $$z_{w_1}\cdots z_{w_k}=z_{q_1}^{m_1}\cdots z_{q_l}^{m_l}$$ in $Z_n$ is the same as the one in the power series computed as follows: look at the entire modified mutation process, except on a mutation step $i$ not corresponding to a $q_j$, replace $z_i$ with 0. This is because replacing all irrelevant $z_i$'s with 0 does not affect the coefficient of the desired term. Call this new power series $Y_n$. $YV_{v_i}$ is defined analogously as well. Consider the following power series, $L_1,\ldots,L_l$.
$$
\begin{aligned}
L_1&=(1+z_{q_1}) \\
L_2&=\left(1+z_{q_2}(1+z_{q_1})^{-a_{q_1,q_2}+b_{q_1,q_2}}\right) \\
&. \\
&. \\
&. \\
L_l&=\left(1+z_{q_l}(L_{l-1}^{-a_{q_{l-1},q_l}+b_{q_{l-1},q_l}}\cdots L_1^{-a_{q_1,q_l}+b_{q_1,q_l}})\right)
\end{aligned}
$$
Let $$H_i:=\prod_{j=1}^{r}L_j^{a_{q_j,i}}.$$ where $r$ corresponds to the final $q_r$ preceding $i$ (so $H_n$ would be $\prod_{j=1}^{l}L_j^{a_{q_j,i}}$).
We show that at any given step $i$, $Y_i=H_i$. We do so by induction.

\paragraph{Base Case: $\bm{i\leq q_1}$:} For this case, we have $$Y_{q_1}=H_{q_1}=(1+z_{q_1})$$ and for $i<q_1$, we have $$Y_i=H_i=1.$$ $Y_i$ is 1 because all the relevant $z_i$ are 0, and $H_i$ is 1 because it is the product over no $L_j$ and therefore just 1.

We have the following recurrence relation. $$Y_iYV_{v_i}=\prod_{v_i \succ j}YV_j+z_i\prod_{v_i \prec j}YV_j$$ I claim that the relation $$H_iHV_{v_i}=\prod_{v_i \succ j}HV_j+z_i\prod_{v_i \prec j}HV_j$$ holds as well. Because of the inductive hypothesis, proving this recurrence relation would suffice to show the power series are equal.

\paragraph{Case 1: $\bm{i\neq q_r}$:} We have $z_i=0$, so we want to show$$
\begin{aligned} H_iHV_{v_i}&=\prod_{v_i \succ j}HV_j+z_i\prod_{v_i \prec j}HV_j \\
\iff H_i&=\frac{\prod_{v_i \succ j}HV_j}{HV_{v_i}}.
\end{aligned}$$

We look at the exponent for each $L_s$ in $H_i$ and show that it is, in fact, the same on both sides of the equation. The exponent on the right hand side, denoted $e_s\left( \frac{\prod_{v_i \succ j}HV_j}{HV_{v_i}} \right)$ is calculated as follows: $$\sum_{v_i \succ j}e_s(HV_j)-e_s(HV_{v_i}).$$ Let $i_j$ be the last index of a mutation at vertex $j$. By the inductive hypothesis, $$e_s\left( \frac{\prod_{v_i \succ j}HV_j}{HV_{v_i}} \right)=\sum_{v_i \succ j}a_{q_s,i_j}-a_{q_s,i_{v_i}}.$$ Recall that $$D_{i,j}=E_{i+1}\cdots E_j \implies D^{-1}_{i,j}=E_j\cdots E_{i+1}$$ since the $E_i$ are involutions. Multiplication by any $E_i$ only changes the corresponding row, so since $a_{i,j}=D^{-1}_{i,j}[v_j,v_i]$, we can replace terms of the form $a_{q_s,i_j}$ with $D^{-1}_{q_s,i-1}[j,v_{q_s}]$, since no entry in the $j$th row of the corresponding matrix changes between step $i_j$ and step $i-1$. It's useful to note this also stays 0 when $L_s$ corresponds to a term that appears after one of our $HV$ terms is calculated. Thus, we have as our expression: $$e_s\left( \frac{\prod_{v_i \succ j}HV_j}{HV_{v_i}} \right)=\sum_{v_i \succ j}D^{-1}_{q_s,i-1}[j,v_{q_s}] -D^{-1}_{q_s,i-1}[v_i,v_{q_s}].$$ However, the right hand side is exactly the operation corresponding to obtaining the entry $D^{-1}_{q_s,i}[v_i,v_{q_s}]=a_{q_s,i}$ from how $E_i$ is defined: add the contributions from all the vertices pointing opposite of the frozen vertices, and subtract the current vertex's contribution. This exactly proves our desired statement that $$e_s(H_i)=e_s\left( \frac{\prod_{v_i \succ j}HV_j}{HV_{v_i}} \right).$$

\paragraph{Case 2: $\bm{i=q_r}$:} Just as in the last case, we can try to compute $$\frac{\prod_{v_i \succ j}HV_j+z_i\prod_{v_i \prec j}HV_j}{HV_{v_i}}.$$ By the first case, $$\frac{\prod_{v_i \succ j}HV_j}{HV_{v_i}}$$ is $H_i$ without the contribution from $L_r$. We now have to show that adding $\frac{z_i\prod_{v_i \prec j}HV_j}{HV_{v_i}}$ gives the same contribution as multiplying by $L_r$. By the same logic as the proof from the first case, we find the term $$\frac{\prod_{v_i \prec j}HV_j}{HV_{v_i}}.$$ Again we find $e_s$ of the expression for each $k$. We get $$e_s\left(\frac{\prod_{v_i \prec j}HV_j}{HV_{v_i}}\right)=\sum_{j \succ v_i}D^{-1}_{q_s,i-1}[j,v_{q_s}] -D^{-1}_{q_s,i-1}[v_i,v_{q_s}].$$ This is exactly the operation of $E^*_i$, so we get $$e_s\left(\frac{\prod_{j \succ v_i}HV_j}{HV_{v_i}}\right)=b_{q_j,i}.$$ Now that we have these identities, we can show that our recurrence relation in fact gives $H_i$.
$$\begin{aligned}
&\frac{\prod_{v_i \succ j}HV_j+z_i\prod_{v_i \prec j}HV_j}{HV_{v_i}} \\
=&\prod_{j=1}^{r-1}L_j^{a_{q_j,i}}+z_i\prod_{j=1}^{r-1}L_j^{b_{q_j,i}} \\
=&(1+z_i\prod_{j=1}^{r-1}L_j^{b_{q_j,i}-a_{q_j,i}})\prod_{j=1}^{r-1}L_j^{a_{q_j,i}} \\
=&L_r\prod_{j=1}^{r-1}L_j^{a_{q_j,i}} \\
=&H_i
\end{aligned}$$
which is what we desired.

Now we must actually compute the coefficient of $z_{w_1}\cdots z_{w_k}$ in $H_n=\prod_{j=1}^{k}L_j^{a_{q_j,n}}$. We prove that it is $\phi(w_1,\ldots,w_k) W(n, w_1,\ldots,w_k)$ by induction on $l=|\sf{q}|$. 

\paragraph{Base Case: $|\sf{q}|=1$:} Then $H_n=L_1^{a_{q_1,n}}$. By the extended binomial theorem, the coefficient of $z_{q_1}^{m_1}$ is in fact $$\frac{(a_{q_1,i})(a_{q_1,i}-1),\ldots,(a_{q_1,i}-m_1+1)}{m_1!}.$$
\paragraph{Inductive step: $\bm{|}\sf{q}\bm{|=l}$:} If we instead compute the coefficient of the term $z_{q_2}^{m_2}\cdots z_{q_l}^{m_l}$, then we can apply the inductive hypothesis to get the desired result. The corresponding $L_i$, which we call $L_i'$, we have are defined as follows.

$$
\begin{aligned}
L_2'&=(1+z_{q_2}) \\
L_3'&=\left(1+z_{q_3}(1+z_{q_2})^{-a_{q_1,q_2}+b_{q_1,q_2}}\right) \\
&. \\
&. \\
&. \\
L_l'&=\left(1+z_{q_l}(L_{l-1}'^{-a_{q_{l-1},q_l}+b_{q_{l-1},q_l}}\cdots L_1'^{-a_{q_1,q_l}+b_{q_1,q_l}})\right)
\end{aligned}
$$

This gives $H'_n=\prod_{j=1}^{l}L_j'^{a_{q_j,n}}$. By inspecting the way the $L_i$'s are formed, we see that we can form $H_n$ by replacing every instance of $z_{q_i}$ with $z_{q_i}(1+z_{q_1})^{-a_{q_1,q_i}+b_{q_1,q_i}}$, and multiplying the entire expression by $(1+z_{q_1})^{a_{q_1,n}}$. If we expand only $q_2,\ldots,q_l$, leaving the $(1+z_{q_1})$ expressions, which corresponds to just expanding $H'_n$, the term $$z_{q_2}^{m_2}\cdots z_{q_l}^{m_l}$$ corresponds to the term $$(1+z_{q_1})^{a_{q_1,n}+\sum_{i=2}^{l}-m_ia_{q_1,q_i}+m_ib_{q_1,q_i}}z_{q_2}^{m_2}\cdots z_{q_l}^{m_l}.$$ Any other terms cannot have a monomial that has the correct number of factors of all of $z_{q_2},\ldots,z_{q_l}$. Thus, we only need to find the coefficient of $z_{w_1}\cdots z_{w_k}$ in this monomial. The corresponding term in $H'_n$ already has coefficient $\phi(w_{m_1+1},w_k)W(n,w_{m_1+1},\ldots,w_k)$, so the monomial itself is that times $$(1+z_{q_1})^{a_{q_1,n}+\sum_{i=2}^{l}-m_ia_{q_1,q_i}+m_ib_{q_1,q_i}}z_{q_2}^{m_2}\cdots z_{q_l}^{m_l}.$$ To find the total coefficient, we multiply this by the coefficient of $z_{q_1}^{m_1}$ in the expansion of $$(1+z_{q_1})^{a_{q_1,n}+\sum_{i=2}^{l}-m_ia_{q_1,q_i}+m_ib_{q_1,q_i}}$$ which by extended binomial theorem is $$\frac{a_{q_1,n}+\sum_{i=2}^{l}-a_{q_1,q_i}+b_{q_1,q_i} \cdots a_{q_1,n}+\sum_{i=2}^{l}-m_ia_{q_1,q_i}+m_ib_{q_1,q_i}-m_1+1}{m_1!}.$$ Since $a_{i,i}$ is 1 and $b_{i,i}$ is 0, this can be rewritten as $$\frac{\prod_{j=1}^{m_1}\left(a_{w_1,n}+\sum_{i=m_1+1}^{k}-a_{w_j,w_i}+b_{w_j,w_i} \right)}{m_1!}.$$ This, in turn, is exactly $\phi(w_1,\ldots,w_k) W(n, w_1,\ldots,w_k)$, thus proving our inductive step.
\end{proof}

\subsection{Formula Discussion}

In this section, we discuss the formula, its terms, and implications.

The formula has the unfortunate property that it is summing over an infinite set of sequences. However, this is easily remedied; it is fairly simple to find an upper bound on the degree of $F_n$. For example, take the maximum number of edges in the quiver at any stage $m$ and raise this value to the $n$th power. This must be an upper bound, because at each step the maximum degree of a label on the quiver must be less than $m$ times the previous maximum value. Then, simply sum only over sequences where the expression $\prod_{i=1}^{k}r_{w_i}$ has degree less than our upper bound. This is clearly finite, since each $r_{w_i}$ is positive. Consequently, this at least limits us to only checking sequences with length less than the upper bound on the degree. In the next few sections, we apply our formula to get simpler formulas for specific cases where we know what the relevant entries of the $C$-matrix are. In many of the cases, we do not limit the sequences we are summing over, if there isn't a specifically interesting way to do so. It is therefore valuable to keep in mind that this procedure would work.

Secondly, let us look at the full expanded version of the formula and understand what the terms mean. We have for sequences $\sf{w}$ with $1\leq w_1 \leq \ldots \leq w_k\leq n$,
\begin{equation} \label{formulafull}
\begin{aligned}
F_n = \sum_{\sf{w}\in W} & \phi(w_1,\ldots,w_k) \prod_{i=1}^{k} \left( D_{w_i,n}^{-1}[v_n,v_{w_i}] + \sum_{j=i+1}^{k} (-D_{w_i,w_j}^{-1}[v_{w_j},v_{w_i}]+A^*_{w_j}A_{w_j}D_{w_i,w_j}^{-1}[v_{w_j},v_{w_i}])\right) \\
&\prod_{i=1}^{v}y_i^{\sum_{j=1}^{k}\delta_{0,w_j}C_{0,w_j}[i,v_{w_j}]}.
\end{aligned}
\end{equation}

First of all, we should understand which sequences $\sf{w}$ contribute to a given monomial term. These would be the terms that have the same $\prod_{i=1}^{k}r_{w_i}$, or align for each $i$ on $$\sum_{j=1}^{k}\delta_{0,w_j}C_{0,w_j}[i,v_{w_j}].$$ Though this appears complicated to work with, $C$-matrix entries are often easy to understand, so it becomes simple to categorize which sequences contribute to a given monomial term.

Now, let us understand what the $D$-matrix entries mean in the skew-symmetric case, where they are the same as the $C$-matrix entries. Recall that $C_n[i,j]$ is the number of arrows pointing into vertex $j$ from vertex $i'$ after $n$ mutation steps on the mutation sequence used to obtain $C_n$. Similarly, $C_{m,n}[i,j]$ is the number of arrows pointing into vertex $j$ from vertex $i'$ after $n-m$ mutation steps where the initial base quiver is actually the original quiver after $m$ mutation steps. There is a caveat though: the color sequence must be the same as it was on the original quiver. If not, we can force it to be the same, loosely by turning a red mutation into a negative analog of a green mutation and vice versa. $C_{m,n}[i,j]$ also makes sense for $n<m$ (simply do the mutations from step $n$ to $m$ in reverse), so this lets us understand what the entries of inverse matrices mean, since $C_{m,n}[i,j]=C^{-1}_{n,m}[i,j]$.

Another thing to understand is the following. The formula does not do a good job of highlighting which sequences of $r_i$'s are going to contribute negatively and positively to the coefficient of a given monomial. There might be a way to group the sequences, so each group of sequences will be a nonnegative contributor. It would be very cool if we could recover positivity using this formula. It could also help simplify the formula, by reducing the set of sequences we have to sum over, if we know certain groups give 0.

Now we state the analog of the $F$-polynomial formula for deformed $F$-polynomials.

\begin{corollary}
Given a framed quiver $Q$ and a mutation sequence $v_1,\ldots,v_n$, we have that $S_n$ can be calculated as follows. Let $W$ be the set of (possibly empty) sequences $\sf{w}$ with $0\leq w_1 \leq \ldots \leq w_k\leq n-1$.
\begin{equation}
S_n = \sum_{\sf{w}\in W} \phi(\sf{w})W(n,n-w_k,\ldots,n-w_1) \prod_{i=1}^{k}C^{-1}_n(r_{w_i})
\end{equation}
Consequently, for a given monomial $p$, over all sequences $0\leq w_1 \leq \ldots\leq w_k \leq n-1$ so that $$p = \prod_{i=1}^{v}y_i^{\sum_{j=1}^{k}\delta_{n-w_j,n}C^{-1}_{n-w_j,n}[i,v_{n-w_j}]}$$ the value of the coefficient is the sum over all the sequences of $$ \phi(\sf{w}) W(n,n-w_k,\ldots,n-w_1).$$
\end{corollary}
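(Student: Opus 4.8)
The plan is to deduce the statement directly from Theorem \ref{formula} together with the defining relation $S_n = -C_n^{-1}(F_n)$ of the deformed $F$-polynomial, so that essentially no new combinatorics is required beyond a reindexing. First I would record that the matrix action on polynomials is linear on exponent vectors: if a monomial $m$ has exponent vector $e$ then $M(m)$ has exponent vector $Me$, and since the exponent vector of a product of monomials is the sum of the individual exponent vectors, $M(m_1\cdots m_k) = M(m_1)\cdots M(m_k)$. Hence $-C_n^{-1}$ commutes with the sum in Theorem \ref{formula} (which is genuinely finite once truncated by the degree bound discussed after the theorem) and distributes across each product $\prod_i r_{w_i}$, leaving the scalar factors $\phi(\sf{w})$ and $W(n,w_1,\ldots,w_k)$ untouched. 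This immediately gives
$$S_n = \sum_{1\le w_1\le\cdots\le w_k\le n} \phi(\sf{w})\,W(n,w_1,\ldots,w_k)\prod_{i=1}^{k} \bigl(-C_n^{-1}\bigr)(r_{w_i}).$$

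Next I would perform the order-reversing reindexing that converts this into the claimed form. The substitution $w_i \mapsto n - w_{k+1-i}$ is a bijection between the sequences $1\le w_1\le\cdots\le w_k\le n$ and the sequences $0\le w_1\le\cdots\le w_k\le n-1$ over which the corollary sums. Under it, $\phi(\sf{w})$ is invariant, since it depends only on the multiset of multiplicities and reversal preserves that, and $W(n,w_1,\ldots,w_k)$ becomes $W(n, n-w_k,\ldots,n-w_1)$, matching the statement exactly. The only thing left is to identify each transformed factor $\bigl(-C_n^{-1}\bigr)(r_{n-w_j})$ with the monomial $C_n^{-1}(r_{w_j})$ of the main display, equivalently with the explicit exponent $\delta_{n-w_j,n}\,C^{-1}_{n-w_j,n}[\,\cdot\,,v_{n-w_j}]$ appearing in the ``consequently'' clause.

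I expect this last identification to be the main obstacle, since it is precisely where the minus sign in $S_n = -C_n^{-1}(F_n)$ and the colour conventions interact. Here I would use two facts. First, because $C_{n-w_j,n} = C_{n-w_j}^{-1}C_n$, we have $C^{-1}_{n-w_j,n} = C_n^{-1}C_{n-w_j}$, whose $v_{n-w_j}$-th column is exactly $C_n^{-1}$ applied to the $c$-vector (the $v_{n-w_j}$-th column of $C_{n-w_j}$) underlying $r_{n-w_j}$; this pins down the direction of the exponent vector and explains why the inverse $C$-matrices of the reverse process, as interpreted in the Formula Discussion, are the natural objects to appear. Second, to reconcile the scalar signs I would invoke sign coherence together with the cocycle identity $\delta_{i,j}\delta_{j,k}=\delta_{i,k}$ (immediate from $\delta_{i,j}$ being the product of the two colour signs), which reduces the required equality $-\delta_{0,n-w_j} = \delta_{n-w_j,n}$ to a short check governed by the colour of the final mutation $r_n$. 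Handling the boundary index $w_j=0$ (the original index $n$) and confirming that these signs align is the delicate bookkeeping; once it is in place, both displayed forms of $S_n$ follow, and reading off the coefficient of a fixed monomial $p$ yields the ``consequently'' statement verbatim.
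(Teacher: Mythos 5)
Your proposal is correct and follows essentially the same route as the paper, whose entire proof is the one-line observation that the corollary is Theorem \ref{formula} with the $y_i$ exponents transformed by the stabilization rule $-C_n^{-1}$. The linearity of the matrix action on exponent vectors, the order-reversing reindexing $w_i\mapsto n-w_{k+1-i}$, and the sign bookkeeping you flag are exactly the details the paper leaves implicit, so you have if anything supplied more than the published argument.
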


\begin{proof}
This is Theorem \ref{formula}, with the $y_i$ exponents transformed according to the stabilization rule.
\end{proof}

This is interesting when we consider the full form of the formula in the skew symmetric case, namely $$
\begin{aligned}S_n = &\sum_{0\leq w_1\ldots w_k\leq n-1} \phi(w_1,\ldots,w_k)\prod_{i=1}^{k} \bigg((a_{n-w_i,n}) + \sum_{j=1}^{i-1} (-C^{-1}_{n-w_j,n-w_i}[v_{n-w_i},v_{n-w_j}]+ \\
&A^*_{n-w_i}A_{n-w_i}C^{-1}_{n-w_j,n-w_i}[v_{n-w_i},v_{n-w_j}])\bigg) \prod_{i=1}^{v}y_i^{\sum_{j=1}^{k}\delta_{n-w_j,n}C^{-1}_{n-w_j,n}[i,v_{n-w_j}]}.
\end{aligned}$$ The terms $C^{-1}_{n-w_j,n}[i,v_{n-w_j}]$ appear both in the exponents of the $y_i$ terms and as a part of the coefficient calculation. This alludes to why $C_n^{-1}$ is a natural operation, and also why it might cause convergence.

Finally, if we look at the proof of the theorem, recall that we had a definition of $L_i$ and $$H_n:=\prod_{j=1}^{r}L_j^{a_{q_j,n}}$$

where $q_1...q_l$ represent the sequence of $z_i$ (the $r_i$) that are not replaced by 0. This gives a polynomial that aligns with the $F$-polynomial on all terms where only the $r_{q_j}$ are considered. In the case where $q_1=1\ldots q_n=n$ is our sequence, when there are $n$ mutation steps, this process gives the $F$-polynomial exactly. More specifically, we obtain

$$
\begin{aligned}
L_1&=(1+z_1) \\
L_2&=\left(1+z_2(1+z_1)^{-a_{1,2}+b_{1,2}}\right) \\
&. \\
&. \\
&. \\
L_n&=\left(1+z_n(L_{n-1}^{-a_{n-1,n}+b_{n-1,n}}\cdots L_1^{-a_{1,n}+b_{1,n}})\right)
\end{aligned}
$$
and 
$$F_n=\prod_{j=1}^{n}L_j^{a_{j,n}}$$

The author was informed by Dr. Bernhard Keller that for the skew-symmetric case, this is analogous to a formula found by Nagao in \cite{N2}.

Finally, we demonstrate an example of our main formula in theorem \ref{formula}.

\begin{example}\label{k2}
Let $K_2$ be the quiver with vertices 1 and 2, and two arrows from vertex 1 to vertex 2. Consider the mutation sequence $1,2,1$. We aspire to find $F_3$ given this sequence. Our formula states $$F_3 = \sum_{\sf{w}\in W} \phi(\sf{w}) W(3, w_1,\ldots,w_k) \prod_{i=1}^{k}r_{w_i}$$ where $W$ consists of nondecreasing sequences of 1,2,3's. As alluded earlier, we can simplify this set by only considering sequences such that $\prod_{i=1}^{k}r_{w_i}$ has degree at most the degree of $F_3$ which is $3$ in $y_1$ and $2$ in $y_2$. Noting that $r_1=y_1, r_2=y_1^2y_2, r_3=y_1^3y_2^2$, the sequences in $W$ are $[1], [1,1], [1,1,1], [1,2], [2], [3]$. Thus, we obtain (shortening the input to $W$ fixing $n=3$), $$F_3 = \phi(1)W(1)r_1 + \phi(1,1)W(1,1)r_1^2 + \phi(1,1,1)W(1,1,1)r_1^3 + \phi(1,2)W(1,2)r_1r_2 + \phi(2)W(2)r_2 + \phi(3)W(3)r_3.$$ To compute the $W$ values, we need to know the relevant $a_{i,j}$ and $b_{i,j}$ values. We know $a_{i,i}$ and $b_{i,i}$ to be 1 and 0 respectively, so we compute the others. These are computed in terms of $C$-matrix entires. Importantly, we have $$
\begin{aligned}
A^g_i&=\begin{pmatrix} -1 & 2 \\ 0 & 1 \end{pmatrix} \text{for $i$ odd} \\
A^g_i&=\begin{pmatrix} 1 & 0 \\ 2 & -1 \end{pmatrix} \text{for $i$ even} \\
A^r_i&=\begin{pmatrix} -1 & 0 \\ 2 & 1 \end{pmatrix} \text{for $i$ odd} \\
A^r_i&=\begin{pmatrix} 1 & 2 \\ 0 & -1 \end{pmatrix} \text{for $i$ even}.
\end{aligned}$$ Recall $a_{i,j}=C^{-1}_{i,j}[v_j,v_i]$. This yields $a_{1,2}=2, a_{1,3}=3, a_{2,3}=2$. Also recall $b_{i,j} = A^*_jA_jC_{i,j}^{-1}[v_j,v_i]$, which yields $b_{1,2}=0, b_{1,3}=1, b_{2,3}=0$. Recall that $$W(n, w_1,\ldots,w_k)=\prod_{i=1}^{k} \left( (a_{w_i,n}) + \sum_{j=i+1}^{k} (-a_{w_i,w_j}+b_{w_i,w_j})\right).$$ Plugging in, we get $W(1)=3$, $W(1,1)=6$, $W(1,1,1)=6$, $W(1,2)=2$, $W(2)=2$, and $W(3)=1$. Noting $\phi$ values, this yields $$F_3 = 3y_1 + \frac{1}{2}6y_1^2 + \frac{1}{6}\cdot 6 y_1^3 + 2y_1^3y_2 + 2y_1^2y_2 + y_1^3y_2^2.$$ This simplifies to the correct expression of $$F_3 = 3y_1 + 3y_1^2 + y_1^3 + 2y_1^3y_2 + 2y_1^2y_2 + y_1^3y_2^2.$$
\end{example}

\section{Exact $F$-Polynomials}

\subsection{Symmetric Quivers}
We aspire to compute $F_n$ for graphs with useful symmetry properties. We then use this to directly do the next two cases.

\begin{definition}
Let a Symmetric Quiver $Q$ be one with the following properties. Take a quiver $Q$ with base quiver $B$ and the periodic mutation sequence $1,\ldots,v,1,\ldots$, with the following properties.
\begin{itemize}
\item The base quiver $B$ is reversible; that is reversing all the arrows and relabeling vertex $i$ with $v+1-i$ returns the same quiver.
\item The sequence of mutations is cyclic; that is mutating $B$ at 1 returns the original quiver upon relabeling vertex $i$ with $i-1$, where 1 is relabeled with $v$.
\item The base quiver has a symmetrical structure; that is swapping every vertex $2 \leq i \leq v$ with $v+2-i$ and fixing vertex 1 returns a quiver with the same arrows pointing in and out of vertex $1$.
\item The quiver is entirely green.
\end{itemize}
For the quiver $Q$ define $$f(x_v,\ldots,x_1)=-x_1+\sum_{\text{ edge in } B \text{ } 1 \to i}x_i$$ and $$g(x_v,\ldots,x_1)=-x_1+\sum_{\text{ edge in } B \text{ } i \to 1}x_i.$$ Define the recurrence relation $s_i=f(s_{i-1},\ldots,s_{i-v})$ and $\ldots,s_{-2}=s_{-1}=0, s_0=1$. Let $s'_{i}=g(s_{i-1},\ldots,s_{i-v})$ 
\end{definition}

\begin{lemma}\label{symr}
Take a symmetric quiver $Q$ and mutation sequence $1,\ldots,v,1,\ldots$. with the definitions above. Then, $$r_k=\prod_{i=1}^{v}y_i^{s_{k+1-i}}.$$
\end{lemma}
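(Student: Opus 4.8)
The plan is to reduce the statement to a claim about one column of the $C$-matrix and then to match a linear recurrence. Since the sequence is entirely green, every factor in the $C$-Matrix Facts lemma is $A_i=A_i^g$, so $C_n=C_{n-1}A_n^g$. Reading this off by columns, mutating at $v_n$ replaces $C_{n-1}[\,\cdot\,,v_n]$ by $-C_{n-1}[\,\cdot\,,v_n]$ and, for $j\neq v_n$, sets $C_n[\,\cdot\,,j]=C_{n-1}[\,\cdot\,,j]+(\#\{v_n\to j\})\,C_{n-1}[\,\cdot\,,v_n]$, where $\#\{v_n\to j\}$ counts base arrows in the current quiver. By sign coherence the green vector $C_{k-1}[\,\cdot\,,v_k]$ is nonnegative, so with $t^{(k)}:=C_{k-1}[\,\cdot\,,v_k]$ we have $r_k=\prod_{i=1}^v y_i^{t^{(k)}_i}$, and the lemma becomes the purely numerical claim that the exponent of $y_i$ in $r_k$ is $s_{k-i}$ (equivalently, the exponent vectors $t^{(k)}$ run along consecutive anti-diagonals $k-i=\mathrm{const}$ of the array $(s_j)$).

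First I make the arrow counts explicit. Let $\sigma$ denote the relabeling $i\mapsto i-1$ (with $1\mapsto v$); the cyclic bullet says $\mu_1(B)=\sigma^{-1}(B)$, and induction gives that the quiver after $m$ mutations is $\sigma^{-m}(B)$. Hence an arrow $v_m\to v_k$ in the step-$(m-1)$ quiver pulls back to an arrow $\sigma^{m-1}(v_m)\to\sigma^{m-1}(v_k)$ in $B$; since $v_m\equiv m$ and $v_k\equiv k\ (\mathrm{mod}\ v)$, its source is vertex $1$ and its target is vertex $k-m+1\ (\mathrm{mod}\ v)$, so $\#\{v_m\to v_k\}=\#\{1\to (k-m+1)\}$, a count of arrows out of vertex $1$ in $B$ --- exactly the data entering $f$. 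Now I follow the single vector $C_{\bullet}[\,\cdot\,,v_k]$ from the previous mutation of $v_k$ (step $k-v$, where it flips sign, using $v_{k-v}=v_k$) up to step $k-1$: each intermediate step $m=k-v+1,\dots,k-1$ contributes $(\#\{v_m\to v_k\})\,C_{m-1}[\,\cdot\,,v_m]=\#\{1\to(k-m+1)\}\,t^{(m)}$, yielding $t^{(k)}=-t^{(k-v)}+\sum_{m=k-v+1}^{k-1}\#\{1\to(k-m+1)\}\,t^{(m)}$. For $k\le v$ the flip term is replaced by the seed $e_{v_k}$ coming from $C_0=I$.

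Finally I match this against $s_i=f(s_{i-1},\dots,s_{i-v})$. Writing out $f$ gives $s_i=-s_{i-v}+\sum_p \#\{1\to p\}\,s_{i-v-1+p}$, while substituting $t^{(k)}_i=s_{k-i}$ and $N=k-i$ into the recurrence above gives $s_N=-s_{N-v}+\sum_p \#\{1\to p\}\,s_{N+1-p}$. Both sums run over the index window $\{N-v+1,\dots,N-1\}$, but in opposite order; they agree term-by-term exactly because the symmetric-structure bullet gives $\#\{1\to p\}=\#\{1\to v+2-p\}$ (the reindexing $p\leftrightarrow v+2-p$). The seed $C_0=I$ is precisely the initial data $s_0=1,\ s_{j}=0\ (j<0)$: the self-term $e_{v_k}$ is the $s_0=1$ contribution, and one checks the first-cycle base cases $k=2,\dots,v$ directly, again using the symmetry to identify $\#\{1\to k\}$ with the value forced by the $s$-recurrence. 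An induction on $k$ then gives the exponent formula for all $k$, which is the assertion. I expect the only real difficulty to be organizational: handling the $\bmod\ v$ arithmetic in $\sigma^{m-1}(v_k)$ and the sign flips without error, and confirming that the symmetric-structure hypothesis is exactly the ingredient that reverses the ordering of the arrow contributions so the two recurrences coincide; notably, the reversibility bullet is not needed for this lemma.
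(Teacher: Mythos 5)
Your proposal is correct and follows essentially the same route as the paper's proof: both arguments show by induction that the exponent vectors of the $r_k$ satisfy the linear recurrence defined by $f$ with initial data matching $s_0=1$, $s_j=0$ for $j<0$, using cyclicity to identify the relevant arrow counts with arrows out of vertex $1$ in $B$ and the symmetric-structure hypothesis to align the two orderings of the contributions; your explicit $C$-matrix column computation simply spells out what the paper compresses into the sentence beginning ``Due to symmetry and cyclicity.'' Note that your exponent $s_{k-i}$ differs from the stated $s_{k+1-i}$ by an index shift, but it is the lemma statement that carries the typo: the paper's own base case computes $\prod_{i}y_i^{s_{1-i}}=y_1$ for $k=1$, and Example \ref{k2} gives $r_2=y_1^{2}y_2=y_1^{s_1}y_2^{s_0}$, both consistent with your convention.
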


\begin{proof}
We prove the statement by induction.

\paragraph{Base Case: step $\bm{k=1}$: }We have $r_i=y_1$ and $\prod_{i=1}^{v}y_i^{s_{1-i}}=y_1$. 

\paragraph{Inductive Step: step $\bm{k \neq 1}$: } We are mutating at mutation step $k$. We need to find the contributors to the frozen vertices pointing into $k$. Consider the previous cycle of mutation steps, and possible arrows pointing into or out of vertex $k$ when the vertex was mutated (thus contributing to vertex $k$) Due to symmetry and cyclicity, the things that were pointing in must be precisely what is pointing out of vertex $k$ now, and vice versa. This easily gives the relation $e_i=f(e_{i-1},\ldots,e_{i-v})$ where $e_i$ is the exponent vector of a given $r_i$. It is worth verifying that this logic also holds when $k<v$ (vertices that have not yet been mutated will have contribution 0). In turn, the relation proves the desired result for $k$ by induction, and so we have proven the lemma.
\end{proof}

\begin{theorem}\label{sym}
Take a symmetric quiver $Q$ and mutation sequence $1,\ldots,v,1,\ldots$. Let $W$ be the set of sequences $\sf{w}$ with $1\leq w_1 \leq \ldots\leq w_k \leq n$. Then $$F_n=\sum_{\sf{w}\in W} \phi(\sf{w}) \prod_{i=1}^{k} \left( s_{n-w_i}+\sum_{j=i}^{k}-s_{w_j-w_i}+s'_{w_j-w_i} \right)\prod_{i=1}^{v}y_i^{\sum_{j=1}^{k}s_{w_j+1-i}}.$$
\end{theorem}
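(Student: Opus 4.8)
The plan is to obtain Theorem~\ref{sym} by specializing the general formula of Theorem~\ref{formula} to a symmetric quiver, so that every ingredient collapses onto the two scalar sequences $s_\bullet$ and $s'_\bullet$. Theorem~\ref{formula} reads $F_n=\sum_{\sf{w}\in W}\phi(\sf{w})\,W(n,w_1,\dots,w_k)\prod_{i=1}^k r_{w_i}$, so the work separates into rewriting the monomial $\prod_{i=1}^k r_{w_i}$ and rewriting the coefficient $W(n,w_1,\dots,w_k)$. The monomial is handled immediately by Lemma~\ref{symr}: since $r_{w_j}=\prod_{i=1}^v y_i^{s_{w_j+1-i}}$, multiplying over $j$ and collecting exponents gives $\prod_{i=1}^k r_{w_i}=\prod_{i=1}^v y_i^{\sum_{j=1}^k s_{w_j+1-i}}$, which is exactly the monomial in the statement.

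The remaining content is to show that $W(n,w_1,\dots,w_k)$ equals the displayed product of $s$- and $s'$-expressions, and this reduces to the two identities
$$a_{i,j}=s_{j-i}\qquad\text{and}\qquad b_{i,j}=s'_{j-i}\qquad(i\le j).$$
Granting these, I substitute $a_{w_i,n}=s_{n-w_i}$, $a_{w_i,w_j}=s_{w_j-w_i}$, and $b_{w_i,w_j}=s'_{w_j-w_i}$ into the definition $W(n,w_1,\dots,w_k)=\prod_{i=1}^k\big(a_{w_i,n}+\sum_{j>i}(-a_{w_i,w_j}+b_{w_i,w_j})\big)$, whereupon each factor becomes $s_{n-w_i}+\sum_{j>i}(-s_{w_j-w_i}+s'_{w_j-w_i})$, which is the coefficient asserted in Theorem~\ref{sym}.

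To prove the two identities I would argue in two steps, mirroring Lemma~\ref{symr}. First, translation invariance: $a_{i,j}$ and $b_{i,j}$ depend only on the difference $j-i$. This is exactly where cyclicity and reversibility enter --- mutating through a full period $1,\dots,v$ returns the base quiver up to the relabeling $\ell\mapsto\ell-1$, so the between-step matrix $C_{i,j}=C_i^{-1}C_j$ (so that $C_{i,j}^{-1}=A_j\cdots A_{i+1}$, a product of involutions) is carried to $C_{i+1,j+1}$ by the relabeling permutation while the indices $v_i,v_j$ shift along with it, leaving the entry $C_{i,j}^{-1}[v_j,v_i]$ fixed; the same holds for $b$. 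Second, the recurrence: writing $\alpha_m:=a_{i,i+m}$, I track the effect of appending one mutation via $C_{i,j+1}^{-1}=A_{j+1}C_{i,j}^{-1}$ and, unwinding over a full period exactly as in the $E_i$-computation inside the proof of Theorem~\ref{formula}, read off that $\alpha_m=f(\alpha_{m-1},\dots,\alpha_{m-v})$ with $\alpha_0=a_{i,i}=1=s_0$ and $\alpha_m=0$ for $m<0$ (unmutated vertices contribute $0$, as in Lemma~\ref{symr}). These are precisely the defining data of $s_m$, so $a_{i,j}=s_{j-i}$. For $b$ the only change is that the factor $A^*_jA_j$ in its definition interchanges the incoming and outgoing edges at the mutated vertex, so the governing recurrence is $g$ (summing edges $\ell\to1$) rather than $f$ (summing edges $1\to\ell$); together with the seed $b_{i,i}=0=s'_0$ this gives $b_{i,j}=s'_{j-i}$.

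The step I expect to be the main obstacle is this recurrence-plus-invariance analysis for the between-step matrices $C_{i,j}$. Unlike Lemma~\ref{symr}, which needs only the symmetry and cyclicity of the base quiver, here one must verify that the relabeling permutation commutes with forming $C_i^{-1}C_j$, with matrix inversion, and with extraction of the $(v_j,v_i)$ entry, uniformly across all pairs $i\le j$. The most error-prone part is the sign and color bookkeeping for $b$: correctly matching the green/red distinction recorded by $A^*_j$ versus $A_j$ to the choice between $g$ and $f$, and confirming the seeds $s_0=1$, $s'_0=0$ so that the diagonal terms behave as required.
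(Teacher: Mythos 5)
Your proposal is correct and follows essentially the same route as the paper: specialize Theorem~\ref{formula}, use Lemma~\ref{symr} for the monomial $\prod_i r_{w_i}$, and identify $a_{i,j}=s_{j-i}$, $b_{i,j}=s'_{j-i}$ via cyclicity and reversibility. The only difference is one of detail --- the paper dispatches the identification of $a$ and $b$ with ``with a little work, one can therefore see,'' leaning on Lemma~\ref{symr} for the seed case $a_{0,i}=s_i$, whereas you spell out the translation-invariance and recurrence argument explicitly.
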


\begin{proof}
We have $$F_n = \sum_{1\leq w_1,\ldots,w_k\leq n} \phi(\sf{w}) \prod_{i=1}^{k} \left( (a_{w_i,n}) + \sum_{j=i+1}^{k} (-a_{w_i,w_j}+b_{w_i,w_j}) \right) \prod_{i=1}^{k}r_{w_i}.$$ Since the quiver is reversible, note that $C_k^{-1}=C_k$ after swapping indices $0$ and $v-1$, $1$ and $v-2$ etc. Furthermore, $C_{i,k}=C_{k-i}$ after cyclic shifting the indices. With a little work, one can therefore see that $a_{i,k}=s_{k-i}$ and similarly $b_{i,k}=s'_{k-i}$. Further, one can see that $a_{0,i}=s_i$ and $b_{0,i}=s'_i$ because of the lemma. Therefore, $$F_n = \sum_{1\leq w_1,\ldots,w_k\leq n} \phi(\sf{w}) \prod_{i=1}^{k} \left( (s_{n-w_i}) + \sum_{j=i+1}^{k} (-s_{w_j-w_i}+s'_{w_j-w_i}) \right) \prod_{i=1}^{k}r_{w_i}$$ By lemma \ref{symr}, we get $$F_n = \sum_{1\leq w_1\ldots w_k\leq n} \phi(\sf{w}) \prod_{i=1}^{k} \left( (s_{n-w_i}) + \sum_{j=i+1}^{k} (-s_{w_j-w_i}+s'_{w_j-w_i}) \right) \prod_{i=1}^{v}y_i^{\sum_{j=1}^{k}s_{w_j+1-i}}$$ which is the desired expression.
\end{proof}

\subsection{$K_r$}

\begin{definition}
The framed quiver $\bm{K_r}$ is the quiver with base quiver having two vertices, 0 and 1, and $r$ arrows from 0 to 1. $K_3$ is illustrated below.

\begin{center}
\begin{tikzpicture}
\node[shape=circle,draw=black] (1) at (0,0) {1};
\node[shape=circle,draw=black] (2) at (4,0) {2};
\draw[-{>[scale length=2]}, thick] (1) to [bend right=20] (2);
\draw[-{>[scale length=2]}, thick] (1) to (2);
\draw[-{>[scale length=2]}, thick] (1) to [bend left=20] (2);

\node[shape=circle,draw=black] (1') at (0,2) {1'};
\node[shape=circle,draw=black] (2') at (4,2) {2'};
\draw[-{>[scale length=2]}, thick] (1') to (1);
\draw[-{>[scale length=2]}, thick] (2') to (2);
\end{tikzpicture}
\end{center}
\end{definition}

\begin{definition}\label{sdef}
Define the sequence $\sf{s}$ as $\ldots,s_{-2}=s_{-1}=0, s_0=1$ and $s_{k+2}=rs_{k+1}-s_k$ for $k\ge -1$.
\end{definition}

The main focus of the papers \cite{L} and \cite{LS} are to find explicit formulas for the $F$-Polynomials of the quivers $K_r$. The former derives an algebraic expression that appears to be more complicated than ours. The latter interprets the coefficients as the count of a combinatorial object, and thus is incomparable to our work. It is interesting that an explicit formula follows as a simple corollary of our general formula.

\begin{theorem}
We can compute $F_n$ for $K_r$ as follows. Let $W$ be the set of (possibly empty) sequences $\sf{w}$ with $1\leq w_1\leq \ldots \leq w_k\leq n$. $$F_n=\sum_{\sf{w}\in W} \phi(\sf{w}) \prod_{i=1}^{k} \left( s_{n-w_i} - \sum_{j=i+1}^{k} s_{w_j-w_i} + s_{w_j-w_i-2} \right)y_1^{\sum_{i=1}^{k}s_{w_i-1}}y_2^{\sum_{i=1}^{k}s_{w_i-2}}$$
\end{theorem}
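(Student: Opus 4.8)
The plan is to recognize this theorem as a direct specialization of Theorem~\ref{sym}, since $K_r$ is itself a symmetric quiver (in the sense of the preceding subsection) under the periodic mutation sequence $1,2,1,2,\ldots$. First I would verify the four defining properties of a symmetric quiver for $K_r$: the base quiver on vertices $\{0,1\}$ with $r$ arrows $0\to 1$ is reversible, the mutation sequence is cyclic, the structural symmetry condition holds trivially since $v=2$, and the quiver is entirely green (which is where I expect to invoke the green/sign-coherence setup). Once $K_r$ is cast as a symmetric quiver, Theorem~\ref{sym} applies verbatim.

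The core computational step is then to identify the sequences $s_i$ and $s'_i$ coming from the functions $f$ and $g$ in the symmetric-quiver definition with the explicit recurrence in Definition~\ref{sdef}. For $K_r$, since vertex $1$ (i.e.\ vertex $0$ in the $K_r$ labeling) has $r$ arrows pointing out to the other vertex and none pointing in, I would compute $f(x_2,x_1) = -x_1 + r x_2$ and $g(x_2,x_1) = -x_1$ (or the analogous expressions after matching the indexing conventions). The recurrence $s_i = f(s_{i-1},s_{i-2}) = r s_{i-1} - s_{i-2}$ with $s_{-2}=s_{-1}=0$, $s_0=1$ then matches Definition~\ref{sdef} exactly. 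The key identity to extract is that $s'_{w_j-w_i} = g(\ldots)$ evaluated along the recurrence unwinds to $s_{w_j-w_i-2}$; this is the substitution that converts the generic $-s_{w_j-w_i}+s'_{w_j-w_i}$ term of Theorem~\ref{sym} into the $-s_{w_j-w_i}+s_{w_j-w_i-2}$ appearing in the statement. I would check this by expressing $s'_k$ in terms of $f$-recurrence values and confirming $s'_k = s_{k-2}$.

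The final step is to read off the monomial exponents. By Lemma~\ref{symr}, $r_k = \prod_{i=1}^{2} y_i^{s_{k+1-i}} = y_1^{s_k} y_2^{s_{k-1}}$, so $\prod_{i=1}^k r_{w_i}$ contributes $y_1^{\sum_i s_{w_i}} y_2^{\sum_i s_{w_i-1}}$. The stated theorem has exponents $\sum s_{w_i-1}$ and $\sum s_{w_i-2}$, reflecting a shift of one in the indexing of $s$ between the two subsections (the $K_r$ labeling uses vertices $0,1$ rather than $1,\ldots,v$); I would reconcile this offset carefully, since it is the kind of place an off-by-one slips in. Substituting the identifications for $s_i$, $s'_i$, and the exponents into the formula of Theorem~\ref{sym} yields the claimed expression.

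**The main obstacle** I anticipate is not any deep difficulty but rather the bookkeeping: carefully matching the two indexing conventions (vertices $1,\ldots,v$ versus $0,1$, and the $s$-index shifts) so that the substitutions $a_{i,k}=s_{k-i}$, $b_{i,k}=s'_{k-i}=s_{k-i-2}$, and the monomial exponents all line up consistently, and confirming that $K_r$ genuinely satisfies every clause of the symmetric-quiver definition (in particular that the mutation sequence stays green throughout, so that the $\delta$ signs are all $+1$ and the $r_i$ exponents are the nonnegative $s$-values without sign corrections).
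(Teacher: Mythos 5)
Your route is the same as the paper's: recognize $K_r$ (for $r\ge 2$) as a symmetric quiver under the periodic sequence $1,2,1,2,\ldots$, apply Theorem~\ref{sym}, and identify the sequences $s$ and $s'$ with the recurrence of Definition~\ref{sdef}; the exponent identification via Lemma~\ref{symr} is also how the paper proceeds. The one substantive slip is the sign of $s'$: from your own (correct) computation $g(x_2,x_1)=-x_1$, the definition $s'_k=g(s_{k-1},s_{k-2})$ gives $s'_k=-s_{k-2}$, not $+s_{k-2}$ as you assert. Consequently the generic term $-s_{w_j-w_i}+s'_{w_j-w_i}$ of Theorem~\ref{sym} specializes to $-\left(s_{w_j-w_i}+s_{w_j-w_i-2}\right)$, i.e.\ both terms are subtracted; the theorem's display is ambiguously parenthesized, but the paper's proof writes the inner sum as $-\sum_{j=i+1}^{k}\left(s_{w_j-w_i}+s_{w_j-w_i-2}\right)$, which is what the correct $s'_k=-s_{k-2}$ produces (and which a direct check of $b_{1,3}$ for $K_2$ confirms). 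The verification you propose of the identity $s'_k=s_{k-2}$ would fail and thereby catch this. Your caution about the off-by-one in the exponents is warranted: the convention consistent with $r_1=y_1$ is $r_k=y_1^{s_{k-1}}y_2^{s_{k-2}}$, matching the statement here rather than the literal $s_{k+1-i}$ of Lemma~\ref{symr}.
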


\begin{proof}
Notice that $K_r$ is a symmetric quiver when $r\geq 2$. Recall Theorem \ref{sym} which states that $$F_n=\sum_{\sf{w}\in W} \phi(\sf{w}) \left( \prod_{i=1}^{k}s_{n-w_i}+\sum_{j=i}^{k}-s_{w_j-w_i}+s'_{w_j-w_i} \right)\prod_{i=1}^{v}y_i^{\sum_{j=1}^{k}s_{w_j+1-i}}.$$ Here, $\sf{s}$ is defined exactly as in \ref{sdef}, and $s'_i=-s_{i-2}$. Plugging in therefore gives $$F_n = \sum_{1\leq w_1\ldots w_k\leq n} \phi(\sf{w}) \prod_{i=1}^{k} \left( (s_{n-w_i}) - \sum_{j=i+1}^{k} (s_{w_j-w_i}+s_{w_j-w_i-2}) \right) y_1^{\sum_{j=1}^{k}s_{w_j-1}}y_2^{\sum_{j=1}^{k}s_{w_j-2}}$$
This is exactly our desired expression.
\end{proof}

Note that the degree in $y_1$ of $F_n$ is $s_{n-1}$, so we only have to sum over sequences with $$\sum_{i=1}^{k}s_{w_i-1} \leq s_{n-1}.$$

Another thing to note is that this language greatly simplifies Example \ref{k2}.

\subsection{Gale Robinson Quivers}

We now aspire to compute $F_n$ for Gale Robinson Quivers specifically.
\begin{definition}
Define the Gale Robinson Quiver $G_{v,r,t}$ as follows.
\begin{itemize}
\item There are $v$ vertices labeled $1,\ldots,v$.
\item For all $1 \leq i \leq v - r$, draw an arrow $i \to i + r$, and for all $1 \leq j \leq r$, draw an arrow
$j \to v-r+j$.
\item  For all $1\leq i \leq v-t$, draw an arrow $t+i\to i$, and for all $1\leq j\leq t$, draw an arrow
$v-s+j\to j$.
\item For all $1\leq i\leq v-r-t$, draw an arrow from $r+i\to t+i$ and for all $1\leq j\leq t-r$, draw an
arrow $r+j\to v-t+j$.
\item Delete any 2-cycles created in the above process.
\end{itemize}
We illustrate $G_{7,2,3}$ below.

\begin{center}
\begin{tikzpicture}
\node[shape=circle,draw=black] (1) at (0,3) {1};
\node[shape=circle,draw=black] (2) at (2.34,1.86) {2};
\node[shape=circle,draw=black] (3) at (2.91,-0.66) {3};
\node[shape=circle,draw=black] (4) at (1.29,-2.70) {4};
\node[shape=circle,draw=black] (5) at (-1.29,-2.70) {5};
\node[shape=circle,draw=black] (6) at (-2.91,-0.66) {6};
\node[shape=circle,draw=black] (7) at (-2.34,1.86) {7};
\draw[-{>[scale length=2]}, thick] (1) to (3);
\draw[-{>[scale length=2]}, thick] (1) to (6);
\draw[-{>[scale length=2]}, thick] (2) to (4);
\draw[-{>[scale length=2]}, thick] (2) to (7);
\draw[-{>[scale length=2]}, blue, thick] (3) to (4);
\draw[-{>[scale length=2]}, thick] (3) to [bend right=10] (5);
\draw[-{>[scale length=2]}, blue, thick] (3) to [bend left=10] (5);
\draw[-{>[scale length=2]}, red, thick] (4) to (1);
\draw[-{>[scale length=2]}, blue, thick] (4) to (5);
\draw[-{>[scale length=2]}, thick] (4) to (6);
\draw[-{>[scale length=2]}, red, thick] (5) to (1);
\draw[-{>[scale length=2]}, red, thick] (5) to (2);
\draw[-{>[scale length=2]}, thick] (5) to (7);
\draw[-{>[scale length=2]}, red, thick] (6) to (2);
\draw[-{>[scale length=2]}, red, thick] (6) to (3);
\draw[-{>[scale length=2]}, red, thick] (7) to (3);
\draw[-{>[scale length=2]}, red, thick] (7) to (4);

\node[shape=circle,draw=black] (1') at (0,4) {1'};
\node[shape=circle,draw=black] (2') at (3.12,2.48) {2'};
\node[shape=circle,draw=black] (3') at (3.88,-0.88) {3'};
\node[shape=circle,draw=black] (4') at (1.72,-3.60) {4'};
\node[shape=circle,draw=black] (5') at (-1.72,-3.60) {5'};
\node[shape=circle,draw=black] (6') at (-3.88,-0.88) {6'};
\node[shape=circle,draw=black] (7') at (-3.12,2.48) {7'};
\draw[-{>[scale length=2]}, thick] (1') to (1);
\draw[-{>[scale length=2]}, thick] (2') to (2);
\draw[-{>[scale length=2]}, thick] (3') to (3);
\draw[-{>[scale length=2]}, thick] (4') to (4);
\draw[-{>[scale length=2]}, thick] (5') to (5);
\draw[-{>[scale length=2]}, thick] (6') to (6);
\draw[-{>[scale length=2]}, thick] (7') to (7);
\end{tikzpicture}
\end{center}
\end{definition}

\begin{lemma}
$G_{v,r,t}$ has the following properties.
\begin{itemize}
\item The base quiver $B$ is reversible.
\item The sequence of mutations is cyclic.
\item The quiver is entirely green.
\item Only $1+r$ and $v+1-r$ point out of vertex 1 and $1+t$ and $v+1-t$ point into 0 (this implies it is symmetric).
\end{itemize}
\end{lemma}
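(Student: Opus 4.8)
The plan is to establish the four bullets by reformulating $G_{v,r,t}$ on the vertex set $\Z/v\Z$ and then treating reversibility, the vertex-$1$ description, and (through it) the symmetric-structure property as direct checks, while isolating cyclicity and greenness as the substantive steps. Reading the indices modulo $v$, I would group the arrows into three translation-type families: the ``$r$-arrows'' $i\to i+r$, the ``$t$-arrows'' $t+i\to i$, and the ``$(t-r)$-arrows'' $r+i\to t+i$, each wrapping around according to the boundary rules in the definition. The value of this reformulation is that, away from the wrap-around indices, every family is genuinely invariant under the rotation $i\mapsto i+1$, so each claim reduces to a uniform statement plus a bounded amount of boundary bookkeeping rather than a case analysis over all $v$ vertices.

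For reversibility I would compose the reflection $\sigma\colon i\mapsto v+1-i$ with arrow reversal and check that each family is sent to itself. Reversing an $r$-arrow $i\to i+r$ and relabeling by $\sigma$ yields $\sigma(i+r)\to\sigma(i)$, which computes to an arrow of the form $i'\to i'+r$; the same computation keeps $t$-arrows and $(t-r)$-arrows within their families, since reversal-plus-reflection preserves the signed distance of an arrow. As this operation is an involution on all directed edges carrying each family into itself, it fixes the quiver. The vertex-$1$ description is then immediate: the only $r$-arrows leaving $1$ go to $1+r$ and $v+1-r$, the only $t$-arrows entering $1$ come from $t+1$ and $v+1-t$, and no $(t-r)$-arrow is incident to $1$ (its endpoints $r+i,\ t+i,\ v-t+j,\ r+j$ never equal $1$), with no $2$-cycle occurring at $1$. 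Finally, the involution fixing $1$ and sending $i\mapsto v+2-i$ swaps $1+r\leftrightarrow v+1-r$ and $t+1\leftrightarrow v+1-t$, so the in- and out-neighborhoods of $1$ are preserved; this is exactly the symmetric-structure hypothesis, so the four bullets together certify that $G_{v,r,t}$ is a symmetric quiver.

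The main obstacle is cyclicity, i.e. showing that mutating at vertex $1$ and relabeling $i\mapsto i-1$ (with $1\mapsto v$) reproduces $G_{v,r,t}$. Here I would apply the mutation rule directly at $1$, whose only neighbors are the in-vertices $\{t+1,v+1-t\}$ and the out-vertices $\{1+r,v+1-r\}$: this creates the four two-path arrows from $\{t+1,v+1-t\}$ to $\{1+r,v+1-r\}$, reverses the arrows incident to $1$, and then deletes $2$-cycles. The delicate part is the $2$-cycle cancellation, since several of the new two-path arrows coincide with or oppose existing $(t-r)$-, $r$-, or $t$-arrows, and I must verify that after cancellation and the cyclic relabeling the three families are recovered exactly, boundary arrows included. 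I expect this to be the bulk of the argument and the place where the relations among $r$, $t$, and $t-r$ are actually used; the translation-invariance set up above should let me reduce it to a constant number of orbit representatives together with the wrap-around cases.

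For the final bullet, that the quiver is entirely green, I would induct along the periodic sequence $1,2,\dots,v,1,\dots$ using sign coherence. Initially every base vertex $i$ carries its frozen arrow $i'\to i$ pointing in, so the mutation at $1$ is green. By the cyclicity just proved, the base quiver returns to itself up to the shift after each single mutation, so the local configuration seen by the next vertex to be mutated reproduces the initial one up to relabeling; maintaining the invariant that the vertex currently being mutated has all incident frozen arrows pointing inward then forces every mutation to be green. The subtle point I would flag is that the frozen ($C$-matrix) data is not literally periodic even when the base quiver is, so I would phrase the invariant in terms of the sign of the relevant $c$-vector and verify only that this sign is preserved; by sign coherence each vertex is unambiguously green or red, so only the sign and not the magnitude must be tracked, which makes this step parallel to, but less delicate than, the cyclicity computation.
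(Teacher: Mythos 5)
Your proposal takes on far more than the paper does: the paper's proof of this lemma is essentially a citation, checking the first and fourth bullets directly from the explicit arrow list and attributing cyclicity and greenness to the literature (the Jeong--Musiker--Zhang reference \cite{JMZ}). Your treatment of reversibility and of the in/out-neighborhood of vertex $1$ matches the paper's ``easy to check'' computations and is correct, including the observation that no $(t-r)$-arrow is incident to vertex $1$ and that the reflection fixing $1$ swaps $1+r\leftrightarrow v+1-r$ and $t+1\leftrightarrow v+1-t$. For cyclicity your plan (mutate at $1$, form the four two-path arrows from $\{t+1,v+1-t\}$ to $\{1+r,v+1-r\}$, cancel $2$-cycles against the existing $(t-r)$-arrows, relabel) is the right finite computation and would succeed, but you have only described it; the $2$-cycle bookkeeping, which you yourself identify as the bulk of the argument, is not carried out.

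The genuine gap is in the greenness bullet. Your inductive invariant --- that the vertex about to be mutated has its frozen arrows pointing inward --- is precisely the statement to be proved, and cyclicity of the \emph{base} quiver does not supply it: as you note, the frozen data is not carried along by the rotation $i\mapsto i-1$, so ``the local configuration seen by the next vertex reproduces the initial one up to relabeling'' is false for the framed quiver, and sign coherence only guarantees that each $c$-vector has a definite sign, not that the sign is positive. Periodic mutation sequences on cyclically symmetric quivers are not green in general, so some property specific to the Gale--Robinson arrow pattern must enter; ``verify that the sign is preserved'' is the entire content of the claim, and no mechanism for that verification is given. To close this you would need either an explicit computation of how the relevant row of the $C$-matrix evolves over one period (showing the $c$-vector of the next mutated vertex stays nonnegative), or, as the paper does, an appeal to \cite{JMZ} where this is established.
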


\begin{proof}
The first statement is easy to check, since the arrows are explicitly stated. The second and third are well-known \cite{JMZ}. The fourth, again, is easy to check since the arrows are explicitly stated.
\end{proof}

\begin{theorem}
Take the quiver $G_{v,r,t}$. Define a sequence $\sf{s}$ with $s_i$ as the number of partitions of $i$ into parts $r,v-r$ (0 for negative values of $i$, 1 for 0). Let $W$ be the set of sequences with $1\leq w_1 \leq \ldots \leq w_k \leq n$. Then
$$F_n=\sum_{\sf{w}\in W} \phi(\sf{w}) \left( \prod_{i=1}^{k}s_{n-w_i}+\sum_{j=i}^{k}-s_{w_j-w_i}-s_{w_j-w_i-v}+s_{w_j-w_i-t}+s_{w_j-w_i-v+t} \right)\prod_{i=1}^{v}y_i^{\sum_{j=1}^{k}s_{w_j-i}}$$
\end{theorem}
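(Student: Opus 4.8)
The plan is to apply Theorem \ref{sym} directly, since the preceding lemma establishes that $G_{v,r,t}$ is a symmetric quiver (reversible base, cyclic green mutation sequence, and the symmetry condition on vertex $1$). The general symmetric formula produces $F_n$ in terms of the sequence $s_i$ coming from the recurrence $s_i = f(s_{i-1},\ldots,s_{i-v})$ and the auxiliary $s'_i = g(s_{i-1},\ldots,s_{i-v})$, where $f$ and $g$ are read off from the arrows into and out of vertex $1$. So the core of the proof is a bookkeeping task: identify $f$, $g$, and solve the recurrence for $s_i$ in closed form, then substitute.

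First I would compute $f$ and $g$ explicitly from the arrow structure at vertex $1$. By the fourth bullet of the lemma, the only arrows out of vertex $1$ go to $1+r$ and $v+1-r$, so $f(x_v,\ldots,x_1) = -x_1 + x_{\text{(}1+r\text{)}} + x_{\text{(}v+1-r\text{)}}$; translating the vertex indices into recurrence offsets gives $s_i = f(s_{i-1},\ldots,s_{i-v}) = s_{i-r} + s_{i-(v-r)}$. This is exactly the recurrence counting partitions of $i$ into parts of size $r$ and $v-r$ (with $s_0 = 1$ and $s_i = 0$ for $i<0$), which matches the definition of $\sf{s}$ in the statement. Similarly, the arrows into vertex $1$ come from $1+t$ and $v+1-t$, so $g$ yields $s'_i = s_{i-t} + s_{i-(v-t)}$. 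I would verify the sign and offset conventions carefully here, since Theorem \ref{sym} treats $s$ and $s'$ as separate sequences and I am expressing $s'$ back in terms of $s$.

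Next I would substitute these into the conclusion of Theorem \ref{sym}, namely
$$F_n=\sum_{\sf{w}\in W} \phi(\sf{w}) \prod_{i=1}^{k} \left( s_{n-w_i}+\sum_{j=i}^{k}-s_{w_j-w_i}+s'_{w_j-w_i} \right)\prod_{i=1}^{v}y_i^{\sum_{j=1}^{k}s_{w_j+1-i}}.$$
Replacing $s'_{w_j-w_i}$ with $s_{w_j-w_i-t}+s_{w_j-w_i-v+t}$ inside the coefficient factor accounts for the two positive terms in the target expression, while the single term $-s_{w_j-w_i}$ from Theorem \ref{sym} must split as $-s_{w_j-w_i}-s_{w_j-w_i-v}$; this extra $-s_{w_j-w_i-v}$ is where I expect the one genuine subtlety to lie. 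I would reconcile it by checking how the two distinct arrows out of vertex $1$ (to $1+r$ and to $v+1-r$) each contribute to the $a_{i,j}$ term versus the $b_{i,j}$ term, i.e. re-examining the precise form of $a_{w_i,w_j} = s_{w_j-w_i}$ when the recurrence has two terms; the $-s_{w_j-w_i-v}$ should emerge from the wrap-around contribution of the second arrow under the cyclic shift identification $C_{i,k}=C_{k-i}$. The $y$-exponent conversion is the cleanest part: the exponent $\sum_{j} s_{w_j+1-i}$ from Theorem \ref{sym} becomes $\sum_j s_{w_j-i}$ after reindexing, matching the claimed exponent directly.

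The main obstacle will be verifying the index offsets and the appearance of the $-s_{w_j-w_i-v}$ term rigorously, since this requires tracking exactly how each of the two out-arrows and two in-arrows at vertex $1$ propagates through the definitions of $a_{i,j}$ and $b_{i,j}$ under the reversibility and cyclicity identifications used in the proof of Theorem \ref{sym}. Everything else is a direct specialization of the already-proven symmetric-quiver formula, so once the recurrence $s_i = s_{i-r}+s_{i-(v-r)}$ and its partition interpretation are nailed down, the substitution is mechanical.
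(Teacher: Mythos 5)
Your overall strategy is the paper's: verify that $G_{v,r,t}$ is a symmetric quiver and specialize Theorem \ref{sym}. But you have made a concrete error in the one place where care was needed: you dropped the $-x_1$ term from the definitions of $f$ and $g$. With the paper's definition $f(x_v,\ldots,x_1)=-x_1+\sum_{1\to i}x_i$ and arrows out of vertex $1$ going to $1+r$ and $v+1-r$, the recurrence is $s_i = s_{i-r}+s_{i-(v-r)}-s_{i-v}$, not $s_i = s_{i-r}+s_{i-(v-r)}$. This is not cosmetic: the generating function identity $\bigl(\sum_i s_i x^i\bigr)(1-x^r)(1-x^{v-r})=1$ shows that it is precisely the three-term recurrence (with the $-s_{i-v}$ correction) that is satisfied by the number of partitions of $i$ into parts $r$ and $v-r$; your two-term recurrence counts ordered compositions instead, so your claimed identification of $\sf{s}$ with the partition-counting sequence in the theorem statement is false as written.

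The same omission is the source of the term you flag as "the one genuine subtlety." Keeping the $-x_1$ term in $g$ gives $s'_m = s_{m-t}+s_{m-v+t}-s_{m-v}$, and substituting $-s_{w_j-w_i}+s'_{w_j-w_i}$ into Theorem \ref{sym} immediately produces all four terms $-s_{w_j-w_i}-s_{w_j-w_i-v}+s_{w_j-w_i-t}+s_{w_j-w_i-v+t}$ of the target coefficient. There is no wrap-around contribution hiding in $a_{i,j}$ to be reconciled, and the hunt you propose through the cyclic-shift identification $C_{i,k}=C_{k-i}$ would not locate the $-s_{w_j-w_i-v}$ term there. Once $f$ and $g$ are written down correctly, the proof is exactly the mechanical substitution you describe, which is how the paper carries it out.
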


\begin{proof}
A Gale Robinson Quiver is in fact a symmetric quiver. Therefore, we can just plug into the formula in the previous section. Define $s_i$ as in the previous section. It is not difficult to see that $s_i$ is the number of partitions of $i$ into parts $r,v-r$ (0 for negative values of $i$, 1 for 0). We also have $s_i'=s_{i-t}+s_{i-v+t}-s_{i-v}$. Thus, literally plugging in to Theorem \ref{sym} gives the desired equation of $$F_n=\sum_{\sf{w}\in W} \phi(\sf{w}) \left( \prod_{i=1}^{k}s_{n-w_i}+\sum_{j=i}^{k}-s_{w_j-w_i}-s_{w_j-w_i-v}+s_{w_j-w_i-t}+s_{w_j-w_i-n+t} \right)\prod_{i=1}^{v}y_i^{\sum_{j=1}^{k}s_{w_j-i}}$$
\end{proof}

Gale Robinson quivers have been specifically studied in depth already. A paper by Jeong, Musiker and Zhang \cite{JMZ} has a combinatorial interpretation for the coefficients in terms of brane tilings. As graphs admissible on surfaces, they also have combinatorial interpretations in terms of snake graphs and paths on marked surfaces. Furthermore a paper by Glick and Weyman \cite{GW} has an explicit formula for the coefficients formed by summing a function over order ideals of a poset. An advantage of our formula over the ones listed is that it is more explicit; it does not abstractly summing over a combinatorial object we do not know how to count, which is a pitfall of all the above formulas. Ours only involves understanding the partition function, which is simple when there are only two possible parts the number can be divided into.

\section{Positivity of Exponents for Deformed $F$-Polynomials}
In this section, we begin our study of deformed $F$-polynomials, aspiring to prove that the deformed $F$-polynomial is, in fact, a polynomial.

\begin{lemma}
Take a quiver $Q$ and a sequence of mutations $v_1,\ldots,v_n$. Every monomial in any $F_n$ is expressible as the product of not necessarily distinct $r_i$ with $i\leq n$.
\end{lemma}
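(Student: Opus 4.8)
The plan is to leverage Theorem~\ref{formula}, which already expresses $F_n$ as a sum over nondecreasing sequences $\sf{w}=w_1,\ldots,w_k$ with $1 \leq w_1 \leq \cdots \leq w_k \leq n$ of the term $\phi(\sf{w}) W(n,w_1,\ldots,w_k) \prod_{i=1}^k r_{w_i}$. The key observation is that every monomial appearing in $F_n$ with nonzero coefficient must, by the structure of that formula, coincide with $\prod_{i=1}^k r_{w_i}$ for at least one sequence $\sf{w}$ whose coefficient $\phi(\sf{w}) W(n,w_1,\ldots,w_k)$ is nonzero and does not cancel. Since each $w_i \leq n$, this immediately exhibits the monomial as a product of the $r_{w_i}$ with indices at most $n$, which is precisely what the lemma asserts.

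First I would recall the statement of Theorem~\ref{formula} and note that it writes $F_n$ entirely as a $\C$-linear combination of monomials of the form $\prod_{i=1}^k r_{w_i}$. Second, I would argue that any monomial $p$ occurring in $F_n$ with nonzero coefficient must equal $\prod_{i=1}^k r_{w_i}$ for some admissible sequence: this is just the contrapositive of the fact that the only monomials that can appear in the right-hand side of the formula are of that shape. Since the formula is an exact identity for $F_n$ (not merely an upper bound on its support), every monomial of $F_n$ is captured. Third, since every index $w_i$ satisfies $w_i \leq n$ by the constraint $1 \leq w_1 \leq \cdots \leq w_k \leq n$, each factor $r_{w_i}$ has $w_i \leq n$, completing the argument. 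The fact that the $r_{w_i}$ need not be distinct is automatic, since the sequences $\sf{w}$ are only required to be nondecreasing, not strictly increasing.

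The one subtlety worth flagging is that Theorem~\ref{formula} is stated in terms of a sequence of mutations $v_1,\ldots,v_n$ with the index $n$ being the terminal step, whereas the present lemma phrases things for a general $F_n$ with $i \leq n$; I would simply note that the two are the same object under the definitions, so no reindexing is needed. I do not expect a genuine obstacle here: the lemma is essentially a direct reading-off of the support of the formula already proved. The only thing to be careful about is to state clearly that we are invoking the \emph{exactness} of Theorem~\ref{formula} — i.e.\ that $F_n$ equals, and is not merely bounded by, the displayed sum — so that we may conclude every monomial of $F_n$ lies among the $\prod_{i=1}^k r_{w_i}$, rather than only that such products are candidates.
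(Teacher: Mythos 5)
Your argument is correct, but it is genuinely different from the one in the paper. The paper does \emph{not} invoke Theorem~\ref{formula} here: it runs a minimal-counterexample induction directly on the mutation recurrence. Taking the smallest $n$ and the smallest-degree offending monomial $m$, it writes $F_nV_{v_n}=r_n\prod_{i\in S_1}V_i+\prod_{i\in S_2}V_i$, observes that every $F$-polynomial has constant term $1$ so $m$ appears on the left with positive coefficient (this step uses positivity of $F$-polynomial coefficients), and notes that every monomial on the right is a product of $r_i$'s by the inductive hypothesis applied to the $V_i$'s together with the explicit factor $r_n$. Your route instead reads the claim off the support of the exact identity in Theorem~\ref{formula}: since $F_n$ equals a (formal) sum of terms $\phi(\sf{w})W(n,w_1,\ldots,w_k)\prod_i r_{w_i}$ with all $w_i\le n$, any monomial not of the form $\prod_i r_{w_i}$ has coefficient given by an empty sum, hence zero. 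That is sound — only finitely many sequences can produce a given monomial, so the support argument is legitimate — and it has the advantage of avoiding the appeal to positivity of coefficients, which is a deep external input. What the paper's proof buys in exchange is independence from the main formula (the lemma then stands on the elementary recurrence alone) and, because the induction runs over all vertices simultaneously, it directly yields the statement for every label $V_i$ in the cluster, not just the polynomial at the last mutated vertex; with your approach one would obtain the same by applying Theorem~\ref{formula} to the mutation sequence truncated at the last mutation of each vertex, which you should say explicitly if you want the full strength of ``any $F_n$'' as the paper later uses it.
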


\begin{proof}
Assume there is a framed quiver $Q$ and sequence of mutations $v_1,v_2,\ldots$ such that some $F_n$ has a monomial not expressible as the product of the $r_i$'s. Take the smallest $n$ for which this is true, and the let the monomial of smallest degree in $F_n$ for which this is true be $m$. Let $V_{v_n}$ be the old label of vertex $v_n$. Let $S_1$ be the vertices corresponding to the set of base quiver vertices pointing in the same direction as the edges from frozen vertices. Let $S_2$ be the vertices corresponding to the other edges. Then, we have $$F_nV_{v_n}=r_n\prod_{i\in S_1} V_i + \prod_{i \in S_2} V_i$$ It is easy to see that each $F_i$ has a positive constant term of 1; namely $V_{v_n}$ does, so then $m$ is a term on the left hand side. Since there are no negative coefficients, the coefficient of $m$ on the left hand side must be positive. However, every term on the right hand side is expressible as a product of $r_i$'s since every term in every $F_i$ is, which is what the $V_i$'s are, and so is $r_n$.
\end{proof}

Recall our set $M_n$ for quiver $Q$ and mutations $v_1,\ldots,v_n$ which is the set of fundamental $r_i$ ($r_i$ not expressible as the product of other $r_j$).

\begin{lemma}
Every monomial in any $F_n$ is expressible as the product of not necessarily distinct elements of $M_n$. In fact, it's expressible as the product of not necessarily distinct elements of $M_m$ for any $m\ge n$, including $M$ itself.
\end{lemma}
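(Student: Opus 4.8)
The plan is to build on the immediately preceding lemma, which states that every monomial in any $F_n$ is a product of (not necessarily distinct) $r_i$ with $i \leq n$. That lemma reduces the problem to a purely multiplicative statement about the monomials $r_i$: it suffices to show that each $r_i$ (for $i \leq n$) is itself expressible as a product of fundamental $r_j$'s drawn from $M_m$ for any $m \geq n$. Since a monomial in $F_n$ is a product of $r_i$'s with $i \leq n$, and each such $r_i$ factors into fundamentals, the whole monomial factors into fundamentals, giving the claim.

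First I would fix $m \geq n$ and prove the key sub-claim: every $r_i$ with $i \leq n$ is a product of elements of $M_m$. I would argue by strong induction on $i$. For the base case, the smallest-degree $r_i$ is necessarily fundamental (it cannot be written as a product of at least two other $r_j$'s, since any such product would have strictly larger degree, as each $r_j$ is a nonconstant monomial in the $y$'s with positive exponents). For the inductive step, given $r_i$, either it is fundamental, in which case $r_i \in M_i \subseteq M_m$ and we are done, or by Definition of \textbf{fundamental} it is expressible as a product of at least two $r_j$'s with $j < i$. Each of those factors $r_j$ has $j < i \leq n \leq m$, so by the inductive hypothesis each factors as a product of elements of $M_m$; concatenating these factorizations expresses $r_i$ as a product of elements of $M_m$.

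The second sentence of the statement (the strengthening from $M_n$ to $M_m$ for all $m \geq n$, including $M$) requires essentially no extra work once the sub-claim is set up with a general $m$: the crucial observation is simply the containment $M_i \subseteq M_m$ whenever $i \leq m$, which holds because being fundamental is a property determined by the $r_j$ with $j < i$, and adding more mutation steps past $i$ does not change whether $r_i$ is a product of earlier $r_j$'s. I would state this monotonicity of the fundamental sets explicitly, as it is what lets the same induction yield $M_n$, any larger $M_m$, or the full set $M$ uniformly.

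The main obstacle is verifying the termination and well-foundedness of the factorization induction, i.e.\ confirming that the recursive unfolding of ``$r_i$ is a product of earlier $r_j$'s'' genuinely bottoms out at fundamentals. This is where I would lean on the degree argument: each factorization into at least two $r_j$'s strictly decreases the index (every factor has $j < i$), and the indices are nonnegative integers, so the process must terminate at indices that are fundamental. I should also double-check the edge case where an $r_i$ might coincide with the empty product or a constant, but Definition \ref{ri} guarantees each $r_i$ is a genuine nonconstant monomial (Sign Coherence ensures at least one arrow, hence a positive exponent), so no degenerate factor arises. With termination secured, the rest is a routine concatenation of factorizations, and the strengthening to arbitrary $m \geq n$ follows from the containment $M_n \subseteq M_m \subseteq M$.
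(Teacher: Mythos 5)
Your proposal is correct and follows essentially the same route as the paper: reduce to the previous lemma, then recursively factor any non-fundamental $r_i$ into earlier $r_j$'s until the process terminates at fundamentals (the paper argues termination by decreasing degree, you by decreasing index — both work since each factorization uses at least two nonconstant factors with strictly smaller indices). Your explicit remark that $M_n \subseteq M_m \subseteq M$ because fundamentality of $r_i$ depends only on the $r_j$ with $j<i$ is a small but worthwhile addition the paper leaves implicit.
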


\begin{proof}
By the previous lemma, we know every monomial is the product of $r_i$'s. Now, if an $r_i$ is not fundamental, express it as a product of other $r_i$'s. Repeating this process must result in a product of fundamental $r_i$'s, since degrees are decreasing.
\end{proof}

\begin{lemma}
Given a framed quiver $Q$ and a sequence of mutations $v_1,\ldots,v_n$, $m \in M_n$, its coefficient can be computed as follows: \\
Let $g$ be the number of $j\le n$ such that $m=r_j$ and $r_j$ is green, and let $r$ be the number of such $j\le n$ so that $r_j$ is red. Let $cf(m,V_i)$ be the coefficient of $m$ in $V_i$. Then $$-(g-r)C_n^{-1}(m)=\left<cf(m,V_1),cf(m,V_2),\ldots,cf(m,V_{v})\right>$$
\end{lemma}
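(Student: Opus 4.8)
We want to compute the coefficient of a fundamental monomial $m \in M_n$ in each of the cluster polynomials $V_1,\ldots,V_v$, and show that the vector of these coefficients equals $-(g-r)C_n^{-1}(m)$, where $g$ and $r$ count the green and red mutation steps $j \le n$ at which $r_j = m$. The plan is to exploit the fact that $m$ is fundamental, so that any expression of $m$ as a product $\prod r_{w_i}$ of $r$'s forces $k=1$ and $w_1 = j$ for some single index $j$ with $r_j = m$. This drastically collapses the general formula from Theorem \ref{formula}.

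\paragraph{Main steps.} First I would apply the main formula (Theorem \ref{formula}), or rather its analog on each vertex $V_i$ rather than just on $V_n$, to extract the coefficient of $m$. The key simplification is that since $m$ is fundamental, the only sequences $\sf{w}$ with $\prod_i r_{w_i} = m$ are the length-one sequences $\sf{w} = [j]$ where $r_j = m$. For such a singleton, $\phi(\sf{w}) = 1$ and $W(\cdot, j) = a_{j,\cdot}$ collapses to a single $C$-matrix entry (the inner sum over $j' > i$ in the definition of $W$ is empty). So the coefficient of $m$ in the $F$-polynomial on vertex $i$ (after $n$ mutations) is $\sum_{j : r_j = m} (\text{sign}) \cdot a_{j,\cdot}$, where the relevant $a$-value is the appropriate $C^{-1}$ entry read off at vertex $i$. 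Second, I would identify this sum of $C^{-1}$-entries with a single coordinate of the vector $C_n^{-1}(m)$: since $m = \prod_i y_i^{e_i}$ has exponent vector $\bm{e}$, the operator $C_n^{-1}$ sends $m$ to the monomial whose exponent vector is $C_n^{-1}\bm{e}$, and the $i$th such coordinate should match the $a$-entry computed above, up to the global sign and the count $(g-r)$.

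\paragraph{Bookkeeping the sign and multiplicity.} The factor $-(g-r)$ is where the color bookkeeping enters. Each green step $j$ with $r_j = m$ contributes with one sign and each red step with the opposite, via the $\delta$-conventions of Definition \ref{ri}; summing over all $j \le n$ with $r_j = m$ produces the net count $g - r$ (green minus red), and the overall minus sign comes from the definition of the deformed polynomial (the $-C^{-1}$ in the $S$-polynomial definition) together with the sign-coherence orientation. I would make this precise by tracking, for a single such $j$, that the contribution to the coefficient vector is exactly $-C_n^{-1}(m)$ if $r_j$ is green and $+C_n^{-1}(m)$ if red, then summing.

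\paragraph{The main obstacle.} The hard part will be showing that the $a_{j,i}$-entries, read off across all vertices $i = 1,\ldots,v$, assemble precisely into the coordinates of $C_n^{-1}(m)$ rather than some other linear image of $\bm{e}$. The definition $a_{i,j} = C_{i,j}^{-1}[v_j,v_i]$ fixes one matrix coordinate at a time, so I must argue that evaluating the fundamental-monomial coefficient at each vertex $i$ corresponds to varying the first matrix index across all rows while the exponent data of $m$ supplies exactly the linear combination that $C_n^{-1}$ performs on $\bm e$. This identification of ``coefficient at vertex $i$'' with ``$i$th coordinate of $C_n^{-1}(m)$'' is the crux; I expect it to follow from unwinding Definition \ref{ri} (which writes $r_j$'s exponent vector in terms of a column of $C_j$) together with the $C$-matrix composition law $C_{m,n} = C_m^{-1}C_n$, but verifying the indices line up correctly is the delicate bookkeeping step.
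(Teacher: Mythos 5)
Your strategy coincides with the paper's: fundamentality collapses the sum in Theorem \ref{formula} to singleton sequences, each contributing $\phi(j)\,W(\,\cdot\,,j)=a_{j,\cdot}$, and these contributions are then matched with coordinates of $-C_n^{-1}(m)$. However, the step you defer as ``the crux'' is a genuine gap, and it is not merely index bookkeeping. Theorem \ref{formula} computes the polynomial at the vertex mutated at the \emph{final} step, so to get $cf(m,V_i)$ you must apply it at time $t$, the last step $\le n$ at which vertex $i$ is mutated. What you obtain directly is $-(g_t-r_t)\,C_t^{-1}(m)[v_t]$, where $g_t,r_t$ count only the occurrences of $m$ among $r_1,\ldots,r_t$. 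Two further arguments are needed to convert this to the claimed $-(g-r)\,C_n^{-1}(m)[i]$, and the paper supplies both. First, $C_n^{-1}=A_n\cdots A_{t+1}C_t^{-1}$, and left multiplication by $A_s$ changes only row $v_s$; since vertex $i=v_t$ is not mutated again before step $n$, row $v_t$ is unchanged and $C_t^{-1}(m)[v_t]=C_n^{-1}(m)[v_t]$. Second, if $m$ recurs as some $r_s$ with $t<s\le n$ (so that $g_t-r_t\ne g-r$), note that $v_s\ne v_t$ and $C_s^{-1}(m)$ is $\pm$ a standard basis vector supported at $v_s$; by the same row-invariance the $v_t$ coordinate is $0$ at steps $t$, $s$ and $n$ alike, so both sides of the identity vanish and the discrepancy in the count is harmless. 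Without these two observations the lemma as stated, with $C_n^{-1}$ and with $g,r$ counted up to $n$ uniformly over all vertices, does not follow from the formula.

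A smaller correction: the leading minus sign has nothing to do with the deformed $S$-polynomial, which does not appear in this lemma. It comes from the sign convention $\delta_{0,j}$ of Definition \ref{ri}: the exponent vector of $r_j$ is $\delta_{0,j}$ times the column $C_j[\,\cdot\,,v_j]$, so each occurrence $j$ contributes $a_{j,t}=\delta_{0,j}\,C_t^{-1}(m)[v_t]$, and since $\delta_{0,j}=-1$ for green and $+1$ for red, summing over occurrences yields exactly $-(g-r)\,C_t^{-1}(m)[v_t]$.
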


\begin{proof}
First, we compute the coefficient of $m$ in $F_t$ for any $t$. If $m$ is fundamental, the only possible sequences of $r_i$'s producting to $m$ consists of the individual $r_i$'s themselves that equal $m$. For all such $r_i$, plugging into our formula for $F$-polynomials gives that the inner expression $\phi(\sf{w}) W(n, w_1,\ldots,w_k)$ simplifies to $-C^{-1}_{i,t}[v_n,v_i]$. Since $$-C^{-1}_{i,t}[v_n,v_i]=\delta_{0,i}C_t^{-1}(m)[v_t],$$ summing over all the $r_i$ equaling $m$ gives $-(g-r)C^{-1}_t(m)[v_t]$. Now, we compute $V_i$. Take $t$ so that $t$ is the last step less than or equal to $n$ where vertex $v_t$ is mutated. If no occurrences of $m$ as an $r_i$ are between $t+1$ and $n$ inclusive, then the coefficient of $m$ is in fact $-(g-r)C^{-1}_t(m)[v_t]$, where the $g$ and $r$ correspond to number of occurrences of $m$ preceding $n$. Note that $$C_n^{-1}=A_n\cdots A_{t+1} C_t^{-1}$$ where each mutation $A_i$ for $t+1\leq i \leq n$ corresponds to a vertex $v_i$. It is easy to see that left multiplication by this matrix only affects the $v_i$th row of the matrix. As such, no mutation affects row $v_t$, so $C^{-1}_t(m)[v_t]=C^{-1}_n(m)[v_t]$.

Now we have the second case, where some occurrence of $m$ proceeds spot $t$ in the mutation sequence. Take a such occurrence of this to be at index $s$. Then, by the way the $C$ matrix is defined, $C^{-1}_s(m)$ is the identity vector with a 1 at the spot $v_m \neq v_t$. Then, recalling that the $v_t$th row of the $C^{-1}$-matrix does not change between spot $t$ and $s$ and $m$, we have $$C^{-1}_t(m)[v_t]=C^{-1}_s(m)[v_t]=C^{-1}_n(m)[v_t]=0.$$ As such, the coefficient of $m$, which is $-(g-r)C^{-1}_t(m)[v_t]$ is 0, but this is also equal to $-(g-r)C_n^{-1}(m)[v_t]$ since this is also 0. Thus, in both cases, the expression simplifies to $-(g-r)C_n^{-1}(m)[v_t]$, and we are done.
\end{proof}

\begin{theorem} \label{posexps}
Take a framed quiver $Q$ and a mutation sequence $v_1,\ldots,v_n$, with the following property: for any fundamental $m \in M_n$, the number of occurrences of $i \leq n$ such that $r_i=m$ and $r_i$ is green is greater than the number where $r_i$ is red. Then, $S_n$ is a polynomial; that is none of the monomial terms have negative exponents if the $r_n$ satisfy this property.
\end{theorem}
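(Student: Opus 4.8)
The plan is to unwind the definition $S_n = -C_n^{-1}(F_n)$ and reduce the claim to a statement about the fundamental monomials, where the preceding lemmas give exact control. Proving that $S_n$ is a polynomial means showing that for every monomial $p$ occurring with nonzero coefficient in $F_n$, the transformed exponent vector $-C_n^{-1}\,\mathrm{exp}(p)$ has no negative entry, where $\mathrm{exp}(p)$ denotes the vector of exponents of $p$. Since the matrix action is linear on exponent vectors — the exponent of a product of monomials is the sum of their exponents, and $-C_n^{-1}$ commutes with this addition — it suffices to establish nonnegativity of $-C_n^{-1}\,\mathrm{exp}(m)$ for the building blocks out of which every $p$ is assembled.

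Here I would invoke the earlier lemma stating that every monomial in $F_n$ is a product of (not necessarily distinct) fundamental monomials $m \in M_n$. Thus $\mathrm{exp}(p) = \sum_i \mathrm{exp}(m_i)$ with each $m_i \in M_n$, and $-C_n^{-1}\,\mathrm{exp}(p) = \sum_i \bigl(-C_n^{-1}\,\mathrm{exp}(m_i)\bigr)$. A sum of componentwise-nonnegative vectors is nonnegative, so the whole problem collapses to the single-fundamental-monomial case.

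For a fixed fundamental $m \in M_n$, I would apply the coefficient lemma, which asserts $-(g-r)\,C_n^{-1}(m) = \langle cf(m,V_1),\ldots,cf(m,V_v)\rangle$, where $g$ and $r$ count the green and red occurrences of $m$ among $r_1,\ldots,r_n$ and $cf(m,V_i)$ is the coefficient of $m$ in $V_i$. Two facts finish the argument: first, by the (now proven) positivity of $F$-polynomials, each coordinate $cf(m,V_i)$ is nonnegative; second, the theorem's hypothesis is exactly $g > r$, so the scalar $-(g-r)$ is strictly negative. Reading the vector identity coordinatewise and dividing by $-(g-r)$ shows each entry of $C_n^{-1}(m)$ is $\le 0$, i.e. each entry of $-C_n^{-1}(m)$ is $\ge 0$. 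Combining this with the linearity reduction yields the theorem.

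I expect the only real subtlety — rather than a genuine obstacle — to be the bookkeeping: making sure the fundamental factors supplied by the decomposition lemma genuinely lie in $M_n$ (so that the hypothesis applies to each of them), and that the $g,r$ in the coefficient lemma are counted over the full prefix $1,\ldots,n$ consistently with the way the hypothesis is stated. All the heavy lifting — the closed-form coefficient computation and the positivity of each $V_i$ — is already available from the preceding results, so once the linearity-plus-reduction skeleton is in place the proof should be short.
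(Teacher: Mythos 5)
Your proposal is correct and follows essentially the same route as the paper's own proof: decompose each monomial of $F_n$ into fundamentals from $M_n$, use linearity of the $-C_n^{-1}$ action on exponent vectors, and then combine the coefficient lemma $-(g-r)C_n^{-1}(m)=\left<cf(m,V_1),\ldots,cf(m,V_v)\right>$ with positivity of $F$-polynomial coefficients and the hypothesis $g>r$ to conclude that each $-C_n^{-1}(m)$ has nonnegative exponents. Your write-up is in fact slightly more careful than the paper's about the sign of the scalar $-(g-r)$, but the argument is the same.
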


\begin{proof}
Express any monomial in $S_n$ as a product of fundamental terms. Choose a sequence of fundamentals $m_1,\ldots,m_k$ and express as $\prod m_i$. Since matrix operations are linear, we have that $$-C^{-1}_n\left(\prod m_i\right)=\prod -C^{-1}_n(m_i).$$ We know that each monomial $-C^{-1}_n(m_i)$ has all positive exponents because of the following. Due to positivity of coefficients of $F_n$, every $m \in M_n$ is a valid monomial when acted on by $-C_n^{-1}$, since the coefficient vector multiplied by $(g-r)$ is the same as the exponent vector of $-C^{-1}_n(m_i)$, and the multiplication by $(g-r)$ preserves sign.
\end{proof}

\begin{corollary}
Given a framed quiver $Q$ and a mutation sequence $v_1,v_2,\ldots$ with every $r_i$ green, $S_n$ is a polynomial.
\end{corollary}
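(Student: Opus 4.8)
The plan is to derive this as an immediate consequence of Theorem \ref{posexps}. The corollary asserts that if every $r_i$ in the mutation sequence is green, then $S_n$ is a polynomial (no negative exponents). Theorem \ref{posexps} already gives a sufficient condition: for every fundamental $m \in M_n$, the number of green occurrences of $i \le n$ with $r_i = m$ must strictly exceed the number of red such occurrences. So the entire content of the proof is to verify that the ``every $r_i$ green'' hypothesis forces this counting condition to hold for every fundamental monomial.

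First I would observe that if the mutation sequence is entirely green, then for any index $i$ the associated $r_i$ is green by definition, so the number of red occurrences of any monomial $m$ as an $r_i$ is simply zero. Thus for any fundamental $m \in M_n$, the red count is $0$. It then remains only to check that the green count is strictly positive, i.e.\ that at least one $i \le n$ satisfies $r_i = m$. This is exactly what it means for $m$ to belong to $M_n$: by the definition of $M_n$ (the fundamental $r_i$ for the sequence $v_1,\ldots,v_n$), every element of $M_n$ arises as $r_i$ for some $i \le n$. Hence the green count is at least $1 > 0$, which is the red count, and the strict inequality required by Theorem \ref{posexps} holds for every fundamental $m$.

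With the hypothesis of Theorem \ref{posexps} verified, I would simply invoke that theorem to conclude that $S_n$ has no monomial with a negative exponent, i.e.\ $S_n$ is a genuine polynomial. The only mild subtlety worth stating explicitly is that an all-green sequence indeed guarantees every $r_i$ is green (this follows from the definition of a green mutation sequence given just after the Sign Coherence discussion), so there are no red contributions to cancel against; the counting condition degenerates to ``green count positive,'' which membership in $M_n$ supplies for free.

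I do not expect any genuine obstacle here, since this is a specialization of an already-proven theorem to a cleaner hypothesis. The only thing to be careful about is the bookkeeping of definitions: one must make sure that ``green mutation sequence'' really does imply each individual $r_i$ is green (rather than only the frozen directions being consistent in some weaker sense), and that every fundamental monomial in $M_n$ is realized as some $r_i$ with $i \le n$ rather than merely as a product. Both are matters of unwinding the definitions rather than any real argument, so the proof is essentially a one-line reduction to Theorem \ref{posexps}.
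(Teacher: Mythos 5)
Your proposal is correct and follows essentially the same route as the paper: both reduce the corollary to Theorem \ref{posexps} by noting that an all-green sequence makes the red count zero for every fundamental monomial. In fact you are slightly more careful than the paper's own one-line proof, since you explicitly verify the \emph{strict} inequality required by Theorem \ref{posexps} by observing that each $m \in M_n$ is realized as some green $r_i$ with $i \le n$, giving green count at least $1 > 0$.
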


\begin{proof}
This is clear; for any $r_i$ the number of green occurrences must be at least the number of red occurrences since there are only green occurrences. Thus, this follows from Theorem \ref{posexps}.
\end{proof}

Almost conversely to Theorem \ref{posexps}, we have the following.

\begin{corollary} \label{rgfails}
Take a framed quiver $Q$ and a mutation sequence $v_1,\ldots,v_n$, with the following property: there exists a fundamental $m$ so that the number of occurrences of $i \le n$ such that $r_i=m$ and $r_i$ is red is greater than the number where $r_i$ is green. Then, $S_n$ is \textbf{not} a polynomial.
\end{corollary}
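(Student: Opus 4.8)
The plan is to prove the contrapositive of Theorem~\ref{posexps} at the level of a single fundamental monomial, and then show that this one bad monomial survives into $S_n$ without being cancelled. The key object is the preceding lemma, which computes, for a fundamental $m\in M_n$, the coefficient vector $\left<cf(m,V_1),\ldots,cf(m,V_v)\right> = -(g-r)C_n^{-1}(m)$, where $g$ and $r$ count green and red occurrences of $m$ among the $r_i$ with $i\le n$. First I would take the hypothesized fundamental $m$ with $r>g$, so that $g-r<0$. Since $C_n^{-1}$ is an invertible integer matrix (it is a product of the involutions $A_i$, each with determinant $-1$), the exponent vector $C_n^{-1}(m)$ is a nonzero integer vector; multiplying by the negative scalar $-(g-r)=r-g>0$ preserves its sign pattern. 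The goal is to exhibit a \emph{negative} entry in the exponent vector of $-C_n^{-1}(m)$, which is exactly a monomial of $S_n=-C^{-1}_n(F_n)$ having a negative exponent, contradicting polynomiality.

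The crucial step is to relate the sign of $g-r$ to positivity of coefficients. By the Laurent/positivity phenomenon (the Fomin--Zelevinsky positivity conjecture, proven in \cite{LS2,GHKK}), every coefficient $cf(m,V_i)$ of the genuine $F$-polynomial data is nonnegative. Therefore the vector $-(g-r)C_n^{-1}(m)$ has all nonnegative entries. When $g>r$ (the situation of Theorem~\ref{posexps}) this forces $C_n^{-1}(m)$ itself to have all nonpositive entries, so $-C_n^{-1}(m)=S_n$-exponents are all nonnegative. In the present corollary, by contrast, $g-r<0$, so the \emph{same} nonnegativity of the coefficient vector now forces $C_n^{-1}(m)$ to have all nonnegative entries, whence $-C_n^{-1}(m)$ has all nonpositive entries. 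The final point is that $C_n^{-1}(m)$ is not the zero vector: since $C_n^{-1}$ is invertible and $m$ is a nontrivial monomial (its exponent vector is nonzero, being the $r$-value of a genuine mutation, which by Sign Coherence has at least one arrow), $C_n^{-1}(m)$ has at least one strictly positive entry. Hence $-C_n^{-1}(m)$ has a strictly negative exponent, and this monomial appears in $S_n$.

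I would close by verifying that this negative exponent is not cancelled by contributions from other sequences $\sf{w}$. Because $m$ is fundamental and we are isolating the coefficient of the basic monomial $m$ itself, the preceding lemma already packages the \emph{total} coefficient of $m$ across the cluster into the single vector $-(g-r)C_n^{-1}(m)$; there is no separate summation to reconcile, since fundamentality means the only sequences contributing to $m$ are the singleton occurrences $r_i=m$. Thus the computed exponent vector is the honest exponent vector of the corresponding monomial in $S_n$, and its strictly negative coordinate witnesses that $S_n$ is not a polynomial.

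The main obstacle I anticipate is the non-vanishing claim $C_n^{-1}(m)\neq 0$ and, relatedly, ruling out that the strictly negative coordinate coincides with a coordinate where an identically-zero cancellation occurs. This is where invertibility of $C_n$ (guaranteed by the $C$-Matrix Facts lemma) does the real work: invertibility immediately gives $C_n^{-1}(m)\neq 0$ for nonzero exponent vector $m$, and combined with the global nonnegativity forced by positivity, a nonzero vector that is entrywise nonnegative must have a strictly positive entry, producing the required strictly negative exponent after negation. The one subtlety to handle carefully is confirming that $m$'s exponent vector is genuinely nonzero, which follows from Sign Coherence guaranteeing at least one frozen arrow into $v_i$, so that $r_i=m$ is a nontrivial monomial in the $y_j$.
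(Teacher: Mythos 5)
Your proposal is correct and follows essentially the same route as the paper: invoke the preceding lemma to write the coefficient vector of the fundamental $m$ as $-(g-r)C_n^{-1}(m)$, use positivity of $F$-polynomial coefficients together with $r>g$ to force $-C_n^{-1}(m)$ to have nonpositive entries, and conclude that $S_n$ has a monomial with a negative exponent. Your extra care in using invertibility of $C_n$ to guarantee a \emph{strictly} negative entry is a welcome refinement of the paper's looser claim that all entries are negative, but it does not change the argument's structure.
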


\begin{proof}
Consider the monomial corresponding to $m$: $-C_n^{-1}(m)$. We claim it will have all negative exponents. Its coefficient vector in the original cluster of $F$-polynomials is $-(g-r)C_n^{-1}(m)$. This has to be always positive due to positivity of coefficients. Since $r>g$, this gives us that $-C_n^{-1}(m)$ has all negative entries, as desired.
\end{proof}

Since we expect that the deformed $F$-polynomial is always a polynomial, we expect that the implied statement is always true.

\begin{conjecture}
Given a framed quiver $Q$ and a mutation sequence $v_1,v_2,\ldots$, it is true that for any $m \in M$, for any $n$ the number of occurrences of $i \le n$ such that $r_i=m$ and $r_i$ is green is at least the number where $r_i$ is red.
\end{conjecture}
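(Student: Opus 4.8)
The plan is to first reduce the counting statement to a sign statement about a single lattice vector, and then to analyze how that vector evolves along the mutation sequence. By Theorem~\ref{posexps} and Corollary~\ref{rgfails}, the conjecture is essentially equivalent to the assertion that $S_n$ is a genuine polynomial for every $n$, which in turn means $-C_n^{-1}(m)$ has nonnegative entries for every fundamental $m\in M$. Write $c_n(m):=C_n^{-1}(m)$ for the exponent vector obtained by acting with $C_n^{-1}$ on $m$, and let $g,r$ be the numbers of green and red $i\le n$ with $r_i=m$. The coefficient lemma established just before Theorem~\ref{posexps} gives, entrywise, that the coefficient of $m$ in $V_j$ equals $-(g-r)\,[c_n(m)]_j$; since all $F$-polynomial coefficients are nonnegative \cite{GHKK}, every entry of $-(g-r)\,c_n(m)$ is $\ge 0$. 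Hence all nonzero entries of $c_n(m)$ share one sign, opposite to the sign of $g-r$. As $m$ is a nontrivial monomial, $c_n(m)\ne 0$, so \emph{sign coherence of $c_n(m)$ is free from positivity}; what remains is to pin down that the common sign is never the wrong one, i.e. that $g-r\ge 0$.

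Next I would track the integer $g-r$ as a walk on $\Z$ indexed by $n$: it is constant except at the occurrences $i$ with $r_i=m$, where it steps $+1$ (green) or $-1$ (red). The local input is that at such an occurrence $c_i(m)=\delta_{0,i}\,e_{v_i}$, equal to $-e_{v_i}$ when $r_i$ is green and $+e_{v_i}$ when $r_i$ is red, because $m=r_i$ forces $\mathrm{exp}(m)$ to be $\pm$ the $v_i$-th column of $C_i$. Feeding this into the coefficient identity, the coefficient of $m$ in $V_{v_i}$ right after a green step is $g-r$ and right after a red step is $r-g$ (with $g,r$ counted through $i$); nonnegativity therefore forces $g-r\ge 0$ after a green occurrence and $g-r\le 0$ after a red one, so in particular the net count just before any red occurrence is at most $1$. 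Running an induction on $n$, the walk can violate $g-r\ge 0$ in exactly one way: a red occurrence of $m$ taken from the state $g-r=0$, which would send the walk to $-1$.

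The heart of the problem is thus to rule out a red occurrence of a fundamental $m$ while its net count is exactly $0$, and this is precisely what positivity cannot supply, since such an event produces the perfectly legal coefficient $+e_{v_i}$ on vertex $v_i$. I expect this to be the main obstacle. The green-sequence corollary is easy exactly because there are no red occurrences and the walk is monotone; all the difficulty lives in the interaction of green and red occurrences of the same fundamental direction. To break it I would pass to the skew-symmetric scattering-diagram picture of \cite{GHKK}: interpret a fundamental $m$ as a real Schur root cutting out a wall $\mathfrak{d}_m$, interpret green versus red occurrences of $m$ as crossings of $\mathfrak{d}_m$ in opposite directions by the piecewise-linear path that the mutation sequence traces through the mutation fan, and read $g-r$ as the \emph{signed} crossing number of that path started in the all-arrows-inward chamber. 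Consistency and convexity of the scattering diagram should force this signed count to be nonnegative, since the path begins on the positive side of $\mathfrak{d}_m$, whereas a red-from-zero crossing would require the path to reach the negative side without ever having left the positive side. Making this homotopy/convexity argument precise, and transporting it back to the purely combinatorial $C$-matrix statement (and then to the general skew-symmetrizable case), is the step I expect to be genuinely hard.
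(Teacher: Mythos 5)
This statement is left as an open conjecture in the paper --- there is no proof of it in the text, only the surrounding observation (via Theorem~\ref{posexps} and Corollary~\ref{rgfails}) that it is forced by the expectation that $S_n$ is always a polynomial. So the honest comparison is between your proposal and nothing, and your proposal does not close the conjecture either. Your reduction is sound and essentially reproduces the paper's own reasoning: the coefficient lemma gives that the coefficient of $m$ in $V_{v_i}$ at an occurrence $i$ with $r_i=m$ equals $g-r$ (green) or $r-g$ (red), so positivity of $F$-polynomial coefficients pins the walk to $g-r\ge 0$ immediately after green steps and $g-r\le 0$ immediately after red steps, leaving exactly one bad event: a red occurrence of a fundamental $m$ taken from the state $g-r=0$. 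Identifying that this single event is what positivity cannot exclude is a genuine clarification beyond what the paper writes down.

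But that event is the entire content of the conjecture, and your proposal does not rule it out. The appeal to scattering diagrams is a heuristic, not an argument: you do not define the wall associated to a fundamental $m$ (fundamentality is defined multiplicatively in terms of the monomials $r_j$, and it is not established that such an $m$ corresponds to a real Schur root or to a single wall of the cluster scattering diagram), you do not prove that green/red occurrences of $m$ are transverse crossings of that wall with opposite signs, and the claimed ``convexity'' statement --- that a path starting in the all-inward chamber cannot cross a given wall negatively before crossing it positively --- is precisely a reformulation of the conjecture rather than a known consequence of consistency of the diagram. There is also a smaller unaddressed point in the reduction itself: the paper's coefficient lemma is proved for $m\in M_n$, i.e.\ $m$ fundamental with respect to the first $n$ mutations, and you apply it at each occurrence index $i$; you would need to check that an element of $M$ (fundamental for the whole sequence) is fundamental at each intermediate truncation where you invoke the lemma, or rerun the lemma's argument at those stages. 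As it stands the proposal is a correct and useful reduction plus an unproven geometric claim equivalent to the target, so the conjecture remains open.
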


The reason we cannot get an if and only if statement with this method directly is that the case where $g=r$ for a fundamental monomial $m$ is tricky to deal with. Replacing basic with fundamental provides an approach that circumvents this, though we do not know how to carry it through.

\begin{conjecture}\label{basiccon}
Given a framed quiver $Q$ and a sequence of mutations $v_1,\ldots,v_n$, $p \in P_n$, the coefficient of $p$ is the following: $$-C_n^{-1}(p)=\left< cf(p,V_1),cf(p,V_2),\ldots,cf(p,V_{v})\right>$$
\end{conjecture}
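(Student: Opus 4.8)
The plan is to prove the identity by induction on the length $n$ of the mutation sequence, exploiting the fact that a single mutation alters only one cluster variable. I would assume the statement for the cluster obtained after $n-1$ mutations (relative to $C_{n-1}^{-1}$) and then analyze the $n$th mutation using $C_n^{-1}=A_n C_{n-1}^{-1}$, valid since each $A_i$ is an involution, together with the mutation recurrence $F_nV_{v_n}=r_n\prod_{i\in S_1}V_i+\prod_{i\in S_2}V_i$ from the positivity lemmas. Because mutating at $v_n$ fixes every $V_j$ with $j\neq v_n$, and because left multiplication by $A_n$ alters only the $v_n$th row of $C_{n-1}^{-1}$ (the same observation already used in Theorem \ref{posexps}), the coordinates $j\neq v_n$ of the claimed identity should transfer verbatim from the inductive hypothesis for any $p$ that already appears in some unchanged $V_j$. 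The entire substantive content is thus concentrated in the single coordinate $cf(p,V_{v_n})$ and, in particular, in those basic monomials that are created for the first time by the $n$th mutation and therefore lie outside the scope of the inductive hypothesis.

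For that coordinate I would first treat the fundamental monomials, where the preceding lemma already gives $\langle cf(m,V_1),\ldots,cf(m,V_v)\rangle=-(g-r)\,C_n^{-1}(m)$, so the target is exactly the normalization $g-r=1$; I would try to extract this from sign-coherence and the constraint that a fundamental $r_i$ can recur only in a controlled pattern. The new case is a basic but non-fundamental $p$, which by the earlier lemma factors as a product $p=\prod_i m_i$ of fundamentals. Linearity of the matrix action gives $-C_n^{-1}(p)=\sum_i -C_n^{-1}(m_i)$, and I would match this against $cf(p,V_{v_n})$ computed from Theorem \ref{formula}, i.e.\ the sum of $\phi(\sf{w})\,W(n,w_1,\ldots,w_k)$ over all nondecreasing $\sf{w}$ with $\prod r_{w_i}=p$, hoping the recurrence organizes these contributions into a coordinatewise linear function of the exponent vector of $p$.

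The hard part is precisely this final reconciliation. The coefficient of a composite monomial is a genuine sum over many contributing $r$-sequences, and there is no a priori reason it collapses to the single linear image $\sum_i -C_n^{-1}(m_i)$; the coefficient of $p$ in a cluster variable is simply not determined by the coefficients of its fundamental factors, so the naive factorization argument cannot close on its own. I expect the crux to be producing a telescoping or cancellation among the terms $\phi(\sf{w})\,W(n,w_1,\ldots,w_k)$ that rewrites the formula of Theorem \ref{formula} as such a linear function. A second, equally real obstacle is the borderline $g=r$ regime flagged above: when a fundamental occurs equally often green and red, the $(g-r)$ factor vanishes and the fundamental lemma supplies no normalization, so the basic-monomial formulation is exactly what must fill the gap, and it is the interaction of these degenerate fundamentals inside a composite $p$ that resists the inductive bookkeeping and is where I expect the argument to be most delicate.
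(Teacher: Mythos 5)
You were asked to prove a statement that the paper itself leaves open: Conjecture~\ref{basiccon} is stated as a conjecture, the paper supplies no proof, and it explicitly remarks that replacing ``fundamental'' with ``basic'' is an approach it does not know how to carry through. So the only question is whether your argument actually closes the conjecture, and it does not --- the two places you yourself flag as ``the hard part'' are genuine, load-bearing gaps rather than deferred technicalities. First, reducing the $v_n$-coordinate to the paper's fundamental-monomial lemma only yields the vector $-(g-r)\,C_n^{-1}(m)$, and nothing in your outline forces the normalization $g-r=1$; the degenerate regime $g=r$ is exactly the obstruction the paper cites as the reason Theorem~\ref{posexps} cannot be upgraded to an if-and-only-if, and invoking sign-coherence does not control how often a given fundamental recurs green versus red.

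Second, and more fundamentally, for a composite basic monomial $p=\prod_i m_i$ the coefficient $cf(p,V_{v_n})$ is, by Theorem~\ref{formula}, a sum of $\phi(\mathbf{w})\,W(n,w_1,\ldots,w_k)$ over \emph{all} nondecreasing index sequences with $\prod_i r_{w_i}=p$ --- including sequences that do not refine your chosen factorization into fundamentals, since distinct $r_j$'s can multiply to the same monomial in many ways. You correctly observe that there is no a priori reason this sum collapses to the additive function $\sum_i\bigl(-C_n^{-1}(m_i)\bigr)$ of the exponent vector, but you do not exhibit the telescoping or cancellation that would make it do so; that identity \emph{is} the conjecture, so the proposal as written assumes what it must prove at the decisive step. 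The inductive scaffolding (only $V_{v_n}$ changes under the $n$th mutation; left multiplication by $A_n$ alters only row $v_n$ of $C_{n-1}^{-1}$) is consistent with the paper's lemmas and is a sensible frame, but it carries none of the weight where the statement actually lives.
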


\begin{theorem}
If Conjecture \ref{basiccon} is true, then for any framed quiver $Q$ and mutation sequence $v_1,\ldots,v_n$, $S_n$ is a polynomial.
\end{theorem}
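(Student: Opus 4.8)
The plan is to unwind the definition $S_n = -C_n^{-1}(F_n)$ and argue monomial by monomial. By the definition of the matrix action on a polynomial, $-C_n^{-1}$ preserves the coefficient of each monomial of $F_n$ and transforms only its exponent vector; thus $S_n$ is a polynomial precisely when the image monomial $-C_n^{-1}(p)$ has all nonnegative exponents for every monomial $p$ occurring in $F_n$. So the entire statement reduces to controlling the sign of the exponents of these image monomials.

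First I would observe that every monomial of $F_n$ is basic. Indeed, $F_n = V_{v_n}$ is the polynomial sitting on a base vertex after the $n$ mutations, so each of its monomials has nonzero coefficient in one of $V_1,\ldots,V_v$ and therefore lies in $P_n$. This is exactly the membership hypothesis needed to invoke Conjecture \ref{basiccon}.

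Next, for each such $p \in P_n$, Conjecture \ref{basiccon} identifies the exponent vector of the image monomial: $-C_n^{-1}(p) = \left< cf(p,V_1), cf(p,V_2), \ldots, cf(p,V_{v})\right>$. In words, the $i$th exponent of $-C_n^{-1}(p)$ is the coefficient of $p$ in the cluster element $V_i$. By the positivity of cluster-variable (equivalently $F$-polynomial) coefficients in the skew-symmetrizable case, established in \cite{GHKK}, every coefficient $cf(p,V_i)$ is nonnegative. Hence each image monomial $-C_n^{-1}(p)$ has all nonnegative exponents, and summing over the monomials of $F_n$ (with their unchanged, positive coefficients) shows that $S_n$ is a genuine polynomial.

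The argument carries essentially no obstacle once Conjecture \ref{basiccon} is granted: the conjecture does all the heavy lifting by turning the exponents of $S_n$ into coefficients of cluster elements, and positivity then finishes the job immediately. The only residual points to verify are the two routine ones above—that the monomials of $F_n$ genuinely lie in $P_n$ so the conjecture applies, and that the matrix action is taken monomial by monomial so that positivity can be applied termwise. The true difficulty is hidden entirely in proving Conjecture \ref{basiccon} itself, which is precisely why the theorem is stated conditionally.
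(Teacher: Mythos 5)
Your proof is correct and follows essentially the same route as the paper's: invoke Conjecture \ref{basiccon} to identify the exponent vector of $-C_n^{-1}(p)$ with the coefficient vector of $p$ in the cluster, then conclude nonnegativity of the exponents from positivity of $F$-polynomial coefficients. If anything, your version is slightly more direct, since you observe that every monomial of $F_n = V_{v_n}$ is already basic and so bypass the paper's (unnecessary here) step of decomposing each monomial into a product of basic ones.
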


\begin{proof}
For any basic monomial, the coefficient vector is the same as the exponent vector, by the conjecture. Any monomial can be rewritten as the product of basics. Each of these primes has positive coefficient vector, and therefore positive exponent vector, so the product will have positive exponent vector.
\end{proof}

\begin{conjecture} \label{primecon}
Take a quiver $Q$ and a sequence of mutations $v_1,\ldots,v_n$. Let $\phi_{n,i}$ be the representation on the base quiver corresponding to the last step of the mutation process up to and including the $n$th step that occurred at vertex $i$. Let a monomial $p$ be prime if any subrepresentation of $\phi_{n,i}$ with dimensions corresponding to the degree vector of $p$ is indecomposable. Then the space of subrepresentations of $\phi_{n,i}$ with dimensions corresponding to the degree vector of $p$ has Euler-Poincare characteristic of the quiver Grassmannian equal to $-C^{-1}_n(p)_{i}$
\end{conjecture}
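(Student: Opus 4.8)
The plan is to realize this statement as the confluence of the Derksen--Weyman--Zelevinsky formula with the coefficient--exponent identity that already drives the stabilization results of this paper. Applying the formula $F_n = \sum_e \chi(Gr_e(\phi_n))\prod_{i=1}^v y_i^{e_i}$ from \cite{DWZ} not only at the final vertex $v_n$ but at each base vertex, the coefficient $cf(p,V_i)$ of a monomial $p$ with degree vector $e$ in the $F$-polynomial $V_i$ equals exactly $\chi(Gr_e(\phi_{n,i}))$, since $\phi_{n,i}$ is by definition the representation governing the last mutation at vertex $i$. Hence the conjecture is equivalent to the single algebraic statement $cf(p,V_i) = -C_n^{-1}(p)_i$ for prime $p$; that is, it is precisely Conjecture \ref{basiccon} restricted to prime monomials, with the Euler characteristic supplying the geometric interpretation of the left-hand side.

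First I would pin down the dictionary between the representation-theoretic notion of primality and the combinatorial notions of fundamental and basic monomials. The obstruction to Conjecture \ref{basiccon} in general is that $-C_n^{-1}$ acts linearly on exponent vectors, so $-C_n^{-1}(p_1 p_2) = -C_n^{-1}(p_1) - C_n^{-1}(p_2)$, whereas $cf(p_1 p_2, V_i)$ does not split as a sum of the coefficients of the factors. The prime hypothesis is designed to eliminate exactly this mismatch: if every subrepresentation of $\phi_{n,i}$ of dimension $e$ is indecomposable, then $p$ admits no factorization $p = p_1 p_2$ coming from a direct-sum splitting of a subrepresentation, so $p$ is atomic and ought to coincide with a fundamental $m \in M_n$. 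I would make this precise by matching decompositions of subrepresentations, splitting by splitting, with factorizations of $p$ into products of smaller $r_j$'s.

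With $p$ identified as fundamental, I would invoke the lemma that for $m \in M_n$ one has $-(g-r)C_n^{-1}(m) = \langle cf(m,V_1),\ldots,cf(m,V_v)\rangle$. To conclude $cf(p,V_i) = -C_n^{-1}(p)_i$ it then suffices to show that the green-minus-red multiplicity satisfies $g - r = 1$ for a prime monomial; taking the $i$-th coordinate yields the coefficient identity, and combining with the \cite{DWZ} identification of $\chi(Gr_e(\phi_{n,i}))$ with $cf(p,V_i)$ completes the argument.

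The hard part will be twofold. First, the degenerate case $g = r$ --- which the paper already singles out as the obstruction to an if-and-only-if positivity criterion --- must be excluded for prime monomials; a priori a prime monomial could occur with balanced green and red multiplicities, in which case the fundamental lemma returns $0$ while the Euler characteristic need not vanish. Second, and more essentially, one must prove rigorously that primality in the geometric sense forces fundamentality in the combinatorial sense with multiplicity $g - r = 1$: the correspondence ``every subrepresentation of dimension $e$ is indecomposable'' $\Longleftrightarrow$ ``$p$ admits no factorization into a product of smaller $r_j$'' has to be established, not merely motivated. Pinning down this dictionary, rather than the Euler-characteristic bookkeeping already supplied by \cite{DWZ}, is where I expect the real difficulty to lie.
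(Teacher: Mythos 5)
The statement you are attempting is stated in the paper as Conjecture \ref{primecon} and is left open there; the paper offers no proof, only a conditional theorem deriving polynomiality of $S_n$ from it. So there is no argument of the paper's to compare yours against, and your proposal must stand on its own. As written, it does not: it is a reduction, not a proof. You correctly observe that the \cite{DWZ} formula identifies $\chi(Gr_e(\phi_{n,i}))$ with $cf(p,V_i)$ (this part is essentially definitional bookkeeping, valid in the skew-symmetric setting where \cite{DWZ} applies), and you correctly locate the entire content of the conjecture in the single claim $cf(p,V_i) = -C_n^{-1}(p)_i$ for prime $p$. But that claim is exactly Conjecture \ref{basiccon} restricted to prime monomials, and your route to it --- ``prime in the geometric sense implies fundamental in the combinatorial sense with $g-r=1$'' --- is asserted as a plan and never established. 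You say so yourself in the final paragraph. A proof proposal whose two ``hard parts'' are (i) excluding $g=r$ and (ii) proving the primality-fundamentality dictionary, with neither carried out, has not proved anything.

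Two specific obstacles deserve emphasis. First, the implication ``every subrepresentation of dimension $e$ of $\phi_{n,i}$ is indecomposable $\Rightarrow$ $p$ is fundamental, i.e.\ not a product of at least two $r_j$'s'' conflates a decomposition of a \emph{module} with a factorization of a \emph{monomial} into degree vectors of specific arrows from frozen vertices; there is no a priori reason a factorization $p = r_{j_1} r_{j_2}$ must be realized by a direct-sum splitting of some subrepresentation of dimension $e$, so the contrapositive you need does not follow from the hypothesis as stated. Second, even granting fundamentality, the paper's lemma gives $-(g-r)C_n^{-1}(m) = \langle cf(m,V_1),\ldots,cf(m,V_v)\rangle$, and nothing in the primality hypothesis visibly controls the multiplicity $g-r$; if $g-r \neq 1$ the conclusion of the conjecture is off by that factor, and if $g-r=0$ your argument yields $0 = 0$ with no information. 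Until you can prove both the dictionary and the multiplicity statement, the proposal is a plausible strategy for attacking an open conjecture, not a proof of it.
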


\begin{theorem}
If Conjecture \ref{primecon} is true, then for any framed quiver $Q$ and mutation sequence $v_1,\ldots,v_n$, $S_n$ is a polynomial.
\end{theorem}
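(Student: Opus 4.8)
The plan is to follow verbatim the template of the theorem proved above under Conjecture \ref{basiccon}, replacing the word ``basic'' by ``prime'' and inserting the representation-theoretic dictionary that Conjecture \ref{primecon} supplies. The entire argument hinges on one observation: for a prime monomial the conjecture identifies the exponent vector of $-C_n^{-1}(p)$ with a vector of $F$-polynomial coefficients, and those are nonnegative by positivity. The only genuinely new ingredient compared to the \ref{basiccon} case is that the conjecture now controls only prime monomials, so a factorization step is forced upon us.

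First I would record the translation between Conjecture \ref{primecon} and coefficients. By the Derksen--Weyman--Zelevinsky formula $V_i=\sum_e \chi(Gr_e(\phi_{n,i}))\prod_j y_j^{e_j}$ quoted in the introduction, the coefficient $cf(p,V_i)$ of a monomial $p$ whose degree (dimension) vector is $e$ equals $\chi(Gr_e(\phi_{n,i}))$. Hence Conjecture \ref{primecon}, which asserts $\chi(Gr_e(\phi_{n,i}))=-C^{-1}_n(p)_i$ for prime $p$, is exactly the prime-monomial analog of Conjecture \ref{basiccon}, namely that for every prime $p$,
\[
-C_n^{-1}(p)=\left\langle cf(p,V_1),\ldots,cf(p,V_v)\right\rangle.
\]
Second I would invoke positivity: since each $V_i$ is an $F$-polynomial, its coefficients are nonnegative (by \cite{LS2} in the skew-symmetric case, or \cite{GHKK} in general), so for every prime $p$ the vector $-C_n^{-1}(p)$ has all nonnegative entries and thus $-C_n^{-1}(p)$ is a genuine monomial, not a Laurent monomial. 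This is the crucial point at which Euler characteristics, a priori possibly negative, are forced nonnegative because they coincide with $F$-polynomial coefficients.

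Finally I would pass from primes to arbitrary monomials. Every monomial occurring in $F_n=V_{v_n}$ is basic, and I would argue each basic monomial factors as a product of primes: the dimension vector of a subrepresentation of $\phi_{n,i}$ splits according to its indecomposable summands (Krull--Schmidt), and iterating this splitting writes $p=\prod_j p_j$ with each $p_j$ prime. Because $-C_n^{-1}$ acts linearly on exponent vectors, the exponent vector of $-C_n^{-1}(\prod_j p_j)$ is $\sum_j -C_n^{-1}(p_j)$, a sum of nonnegative vectors and hence nonnegative. Therefore $-C^{-1}$ carries every monomial of $F_n$ to a monomial with nonnegative exponents, so $S_n=-C^{-1}(F_n)$ is a polynomial.

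I expect the main obstacle to be precisely this last step: justifying that every basic monomial factors into the primes of Conjecture \ref{primecon}. The definition of prime is tied to the representation $\phi_{n,i}$, so one must verify the factorization is simultaneously compatible across all vertices $i$, and handle the fact that decomposing a \emph{single} subrepresentation into indecomposables yields summands whose dimension vectors need not themselves be prime (only that particular subrepresentation was decomposable), so one needs an inductive argument ordered by degree together with care about distinct subrepresentations of the same dimension decomposing differently. The first two steps, by contrast, are essentially bookkeeping once the conjecture and the known positivity of $F$-polynomials are granted.
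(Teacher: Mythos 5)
Your proposal follows essentially the same route as the paper's own proof: use the conjecture to identify the exponent vector of $-C_n^{-1}(p)$ with the (nonnegative) coefficient vector for prime monomials, then decompose an arbitrary monomial into primes via iterated decomposition of the corresponding representation and conclude by linearity of the matrix action. The paper's version is far terser and does not spell out the Derksen--Weyman--Zelevinsky translation or flag the subtlety in the prime factorization step that you rightly identify as the weakest link, but the argument is the same.
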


\begin{proof}
For any prime monomial, the coefficient vector is the same as the exponent vector, by the conjecture. Any monomial can be rewritten as the product of primes; rewrite it as the direct sum of two representations, and inductively decompose it into primes. Each of these primes has positive coefficient vector, and therefore positive exponent vector, so the product will have positive exponent vector.
\end{proof}

\section{Convergence in Periodic Case for Green Quivers}

\begin{lemma}
Take a framed quiver $Q$ and a green mutation sequence $v_1,\ldots,v_n$. For any $r_i$, we have that $-C_{n}^{-1}(r_i)$ is a monomial with positive exponents.
\end{lemma}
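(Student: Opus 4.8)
The plan is to leverage the two structural lemmas already established: that any $r$-monomial decomposes into fundamental monomials of $M_n$, and that for a fundamental $m\in M_n$ the vector $-(g-r)C_n^{-1}(m)$ is exactly the coefficient vector $\langle cf(m,V_1),\ldots,cf(m,V_v)\rangle$. First I would record the trivial half of the statement: since $r_i$ is a single monomial, $-C_n^{-1}(r_i)$ is again a single monomial, whose exponent vector is $-C_n^{-1}$ applied to the exponent vector of $r_i$. Hence ``is a monomial'' is automatic, and the whole content is that these exponents are nonnegative (which is what ``positive exponents,'' i.e. a genuine rather than Laurent monomial, means here).

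Next I would reduce to the fundamental case. Since $r_i$ is itself one of the $r$-monomials, the recursive decomposition underlying the preceding lemmas (replace any non-fundamental $r_j$ by a product of lower-index $r$'s, which terminates because degrees strictly decrease) expresses $r_i=\prod_j m_j$ with each $m_j\in M_n$ fundamental. Because the mutation sequence is green, every $r_j$, and in particular every fundamental factor $m_j$, is green; thus for each $m_j$ the number $g$ of green occurrences among $r_1,\ldots,r_n$ is at least $1$ while the number $r$ of red occurrences is $0$, so $g-r>0$. Applying the coefficient lemma to each $m_j$ and invoking positivity of $F$-polynomial coefficients \cite{GHKK}, the vector $-C_n^{-1}(m_j)=\tfrac{1}{g}\langle cf(m_j,V_1),\ldots,cf(m_j,V_v)\rangle$ has nonnegative entries; since $C_n$ is an integer matrix with $\det C_n=\pm 1$ its inverse is integral, so these entries are nonnegative integers and $-C_n^{-1}(m_j)$ is a genuine monomial with nonnegative exponents.

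Finally I would assemble the pieces using linearity of the matrix action on exponent vectors. Because the exponent of a product of monomials is the sum of the exponents, and a linear map of a sum is the sum of the images, we get $-C_n^{-1}(r_i)=\prod_j\bigl(-C_n^{-1}(m_j)\bigr)$, a product of monomials each with nonnegative exponents, which therefore itself has nonnegative exponents. This gives the claim.

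The main obstacle is precisely that the coefficient lemma is only available for fundamental monomials, whereas the given $r_i$ need not be fundamental; the crux is the reduction step, namely justifying that $r_i$ decomposes into fundamentals drawn from $M_n$ (not merely $M_i$) and that each factor inherits the green property so that $g-r>0$. A purely computational alternative is also available as a cross-check: telescoping $-C_n^{-1}C_{i-1}=-A_n\cdots A_i$ via the involutivity of the $A_j$, together with $A_i e_{v_i}=-e_{v_i}$, identifies the exponent vector of $-C_n^{-1}(r_i)$ with the $v_i$-th column of $C_{i,n}^{-1}$. However, converting that identity into a positivity statement appears to require exactly the sign-coherence and positivity input used above, so I would not pursue it as the primary route.
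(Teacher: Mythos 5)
Your proposal is correct and follows essentially the same route as the paper: decompose $r_i$ recursively into fundamental monomials of $M_n$, apply the coefficient lemma together with positivity of $F$-polynomial coefficients (noting $g-r>0$ since all mutations are green) to get nonnegative exponents for each fundamental factor, and multiply using linearity of the matrix action. The paper's proof is just a terser version of this, citing ``a similar argument as before'' where you spell out the details.
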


\begin{proof}
We show each $r_i$ is the product of fundamentals. Assume $r_i$ is not a fundamental. Then, by definition, it is the product of other $r_j$'s. Repeating recursively, it is the product of fundamentals. Then, by a similar argument as before, $-C_{n}^{-1}(r_i)$ has to have all positive components, since $-C_{n}^{-1}$ of all its components in the decomposition into fundamentals is positive.
\end{proof}

\begin{lemma}
Given a green quiver $Q$ and a mutation sequence $v_1,v_2,\ldots$, all the $r_i$'s are distinct.
\end{lemma}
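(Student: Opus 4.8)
The plan is to argue by contradiction and reduce everything to the immediately preceding lemma. Suppose $r_i=r_j$ with $i<j$. Since the sequence is green, Definition \ref{ri} gives $\delta=-1$ at every step, so the exponent vector of $r_k$ is $-C_ke_{v_k}$ (the negated $v_k$-th column of $C_k$) for each $k$. Using $C_k=A_1\cdots A_k$, the fact that each $A_k$ is an involution, and the identity $A_ke_{v_k}=-e_{v_k}$ (the $v_k$-th column of $A_k^g$ is $-e_{v_k}$), one computes that applying the stabilization operator $-C_k^{-1}$ to $r_k$ returns the single variable $y_{v_k}$; that is, $-C_k^{-1}(r_k)$ has exponent vector $e_{v_k}$. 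More generally, for every $k\ge i$ one gets $C_k^{-1}C_i=A_k\cdots A_{i+1}$, so $-C_k^{-1}(r_i)$ has exponent vector $u_k:=A_kA_{k-1}\cdots A_{i+1}e_{v_i}$, where $u_i=e_{v_i}$ and $u_k=A_ku_{k-1}$.

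Next I would invoke the preceding lemma, applied to each green prefix $v_1,\dots,v_k$ (a prefix of a green sequence is green, since the color of step $k$ is determined by $Q_{k-1}$), to conclude that every $u_k$ is a nonnegative vector. Under the contradiction hypothesis $r_i=r_j$ the exponent vectors of $r_i$ and $r_j$ coincide, so applying the linear operator $-C_j^{-1}$ to both gives $u_j=-C_j^{-1}(r_i)=-C_j^{-1}(r_j)=e_{v_j}$.

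The contradiction then comes from comparing the two consecutive steps $j-1$ and $j$. Since step $j$ is green, $A_j=A_j^g$ acts as the identity on every coordinate except the $v_j$-th. From $u_j=A_ju_{j-1}=e_{v_j}$ I read off that every coordinate of $u_{j-1}$ other than the $v_j$-th must vanish, so $u_{j-1}=c\,e_{v_j}$ with $c=(u_{j-1})_{v_j}$. Reading the $v_j$-th coordinate, the mutation formula expresses $(u_j)_{v_j}$ as $-(u_{j-1})_{v_j}$ plus a nonnegative integer combination of the coordinates $(u_{j-1})_u$ with $u\ne v_j$; since all of those vanish, $1=(u_j)_{v_j}=-(u_{j-1})_{v_j}=-c$, hence $c=-1$. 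This contradicts $u_{j-1}\ge 0$ from the lemma, completing the proof.

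The step I expect to be most delicate — and the tempting wrong turn to avoid — is the bookkeeping of signs. One is tempted to apply sign-coherence directly to the ``middle'' sub-sequence $v_{i+1},\dots,v_j$ in order to control the relevant column of $C_{i,j}$, but a contiguous sub-sequence of a green sequence need not be green (re-framing at step $i$ can flip colors), so sign-coherence is unavailable there; indeed the purely linear-algebraic identities turn out to be perfectly consistent with $r_i=r_j$. The actual leverage comes only from combining positivity of $-C_k^{-1}(r_i)$ at the two consecutive steps $j-1$ and $j$ with the explicit single-coordinate action of the green mutation matrix $A_j^g$. I would also record the edge case $j=i+1$, where $u_{j-1}=e_{v_i}$: greenness forbids mutating the same vertex twice in a row, so $v_{i+1}\ne v_i$, and the same coordinate computation again forces $c=-1$, so no separate treatment is needed.
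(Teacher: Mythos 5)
Your argument is correct, and at its core it performs the same one-step-backward computation as the paper: both proofs derive from $r_i=r_j$ that a vector attached to step $j-1$, known to be nonnegative, would be forced to have a $-1$ in coordinate $v_j$, using that $A_j^g$ alters only the $v_j$-th row. The difference lies in which positivity statement carries the load. The paper first shows the repeated monomial $m$ is fundamental (because $-C_j^{-1}(m)=e_{v_j}$ has degree one), then applies its coefficient-vector lemma for fundamentals to get the cluster-coefficient vector $-(g-r)C_j^{-1}(m)=2e_{v_j}$, pulls back one mutation step, and contradicts positivity of $F$-polynomial coefficients directly. You instead work with the deformed exponent vectors $u_k=-C_k^{-1}(r_i)$, identified via the involution property as $A_k\cdots A_{i+1}e_{v_i}$, and invoke only the immediately preceding lemma (nonnegativity of $-C_k^{-1}(r_i)$ for the green prefix $v_1,\ldots,v_{j-1}$). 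This lets you bypass the fundamentality argument and the coefficient-vector lemma entirely; since the preceding lemma is itself ultimately proved from positivity of coefficients, the logical dependencies are the same, but your organization is tighter and avoids the paper's slightly delicate bookkeeping of the exact value $2$. Two trivial remarks: the distinction between strictly positive and nonnegative exponents in the cited lemma is immaterial here, since you only need to exclude a $-1$ entry; and in your edge case $j=i+1$ the contradiction actually arrives one line earlier -- the condition $u_{j-1}=c\,e_{v_j}$ already fails because $(u_{j-1})_{v_i}=1$ with $v_i\ne v_j$, so $c$ comes out as $0$ there rather than $-1$ -- but, as you say, no separate treatment is needed.
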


\begin{proof}
Assume monomial $m$ is both $r_i$ and $r_j$ for $j>i$. Consider the quiver after $j$ mutation steps. First we show $m$ is fundamental. If not, we will have $$-C^{-1}_j(m)= \prod-C^{-1}_j(r_a).$$ However, the left side is just the 0 vector with a 1 at $v_j$, and this has degree 1 so it cannot be the product of other terms. Now we know that the coefficient vector of a fundamental $r_i$ is simply $-(g-r)C^{-1}_j(r_i)$ which is 0 everywhere and 2 at $v_i$. In order for this to be true, at the previous step, the coefficient vector had to be 0 everywhere and $-1$ at $v_i$. This is because nothing other than $v_i$  changed; the new $r_i$ contributes 1, so the old term must contribute 1, and it was negated so must have been $-1$ before. This contradicts positivity of coefficients.
\end{proof}

\begin{lemma}
Take a green quiver $Q$, and a periodic mutation sequence $v_1,\ldots,v_p,\ldots,v_n$. For any monomial $m$ in the stabilized polynomial, there exists a constant $c$ so that for any sequence $w_1,\ldots,w_k$ with $(-C_n^{-1}(r_{w_1}))\cdots (-C_n^{-1}(r_{w_k})) = m$, then $n-c\leq w_1,\ldots,w_k$, independent of $n$.
\end{lemma}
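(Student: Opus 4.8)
The plan is to show that contributions to any fixed monomial $m$ in the stabilized polynomial come only from sequences whose indices are bounded below by $n-c$ for a constant $c$ depending on $m$ but \emph{not} on $n$. The key structural fact I would exploit is that the quiver is green and periodic, so the behavior of $-C_n^{-1}$ near the end of the mutation sequence is governed only by how far back from step $n$ we look, not by $n$ itself. Concretely, for a periodic green sequence the quantity $-C_n^{-1}(r_{w})$ depends on $w$ essentially through the offset $n-w$ (by the cyclic/periodicity arguments already used in Theorem~\ref{sym}, where $C_{i,k}=C_{k-i}$ after cyclic shifting and $C_n^{-1}=C_n$ after reversing). So the first step is to pin down that each $-C_n^{-1}(r_w)$ is a monomial whose exponent vector is a function of the offset $n-w$ modulo the period $p$ together with the number of completed cycles, and by the previous lemma these exponents are all nonnegative.

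Next I would argue that these monomials $-C_n^{-1}(r_w)$ have exponent vectors whose total degree \emph{grows} as the offset $n-w$ grows. The intuition is that $r_w$ is itself a monomial in the $y_i$ whose degree increases with $w$ (as one sees in the symmetric-quiver setting, where $r_k=\prod_i y_i^{s_{k+1-i}}$ and the $s_j$ grow), and applying $-C_n^{-1}$ transforms this into a monomial whose degree is controlled from below by the offset. The crucial point for a green periodic quiver is that once $n-w$ exceeds one full period, the deformed monomial $-C_n^{-1}(r_w)$ acquires strictly positive degree in at least one coordinate that only increases with the offset; in particular there is a threshold offset beyond which $\deg\bigl(-C_n^{-1}(r_w)\bigr)$ exceeds $\deg(m)$. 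Since all the factors $-C_n^{-1}(r_{w_i})$ have nonnegative exponents, their product can equal $m$ only if each individual factor has degree at most $\deg(m)$.

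Combining these, I would set $c$ to be the threshold offset at which the minimal positive degree of $-C_n^{-1}(r_w)$ first exceeds $\deg(m)$; this is finite because the degrees are strictly increasing in the offset (after accounting for periodicity), and it is independent of $n$ precisely because the offset dependence is all that matters. Then any $w_i$ with $w_i < n-c$ would force $-C_n^{-1}(r_{w_i})$ to have degree larger than $\deg(m)$, making the product strictly exceed $m$ in degree and hence unable to equal $m$. Therefore every index satisfies $n-c \le w_i$, as desired.

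The main obstacle I anticipate is rigorously establishing the monotonicity claim: that the degree of $-C_n^{-1}(r_w)$ is (eventually) strictly increasing in the offset $n-w$, uniformly in $n$. The periodicity gives that the exponent vectors satisfy a linear recurrence, but I must rule out the degenerate possibility that the degree stays bounded or is eventually periodic without growing — which would break the existence of a finite $c$. I expect to handle this by using greenness: greenness forces all the relevant $C$-matrix sign data to be coherent, so the recurrence for the exponent sums has nonnegative, non-cancelling terms, and one can show the degree is bounded below by a strictly increasing function of the number of completed periods. Verifying that this lower bound genuinely diverges (rather than stabilizing) is the delicate step, and it is where the green hypothesis is essential.
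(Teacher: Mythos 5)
Your proposal rests on a monotonicity claim — that the degree of $-C_n^{-1}(r_w)$ is eventually strictly increasing in the offset $n-w$ — which you correctly identify as the delicate step but do not actually close, and this is a genuine gap: nothing you write rules out the degenerate possibility that infinitely many offsets produce deformed monomials of bounded degree. The paper's proof avoids this entirely by an injectivity argument rather than a growth argument. It observes (as you do) that by periodicity $-C_n^{-1}(r_w)$ depends only on the offset $n-w$ and the residue of $n$ modulo $p$, and then shows that \emph{two distinct offsets can never produce the same deformed monomial}: if $-C^{-1}_{k-b,k}(e_{v_a})=-C^{-1}_{k-c,k}(e_{v_a})$ for $b\neq c$, applying the $C$-matrix to undo the deformation forces $r_{k-b}=r_{k-c}$, contradicting the preceding lemma that all $r_i$ are distinct for a green quiver. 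Combined with nonnegativity of the deformed exponents (the other preceding lemma), only the finitely many monomials dividing $m$ can appear as factors, each realized by at most one offset per residue class, and $c$ is the maximum of these finitely many offsets.

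Note that the distinctness lemma is also the natural way to repair your argument: the set of monomials with nonnegative integer exponents and total degree at most $\deg(m)$ is finite, so if infinitely many offsets gave factors of degree at most $\deg(m)$, the pigeonhole principle would force two offsets to give the \emph{same} monomial — exactly what distinctness forbids. In other words, the weaker finiteness statement you need follows from injectivity without ever proving that degrees grow monotonically; as written, your proposal neither proves the monotonicity nor invokes the distinctness of the $r_i$, so the constant $c$ is not shown to exist.
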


\begin{proof}
Consider instead the monomials after acted on by $-C_n^{-1}$. Let the sequence of mutations be $1,\ldots,p,1,\ldots$, ending in $a$. We have $$-C_n^{-1}=A_n\cdots A_1 = A_a\cdots A_1A_p\cdots A_1 \cdots A_1.$$ For fixed $a$, consider each of the monomials $A_a\cdots A_1\cdots (e_{v_a})$, at each possible termination point of the matrix product, and assume two of them are equal. Say the length of the sequences at these two termination points are $b$ and $c$. Then take $k$ larger than both $b$ and $c$ and equal to $a \mod p$;. Then $$-C^{-1}_{k-b,k}(e_{v_a})=-C^{-1}_{k-c,k}(e_{v_a})$$ However then $$C(-C^{-1}_{k-b,k}(e_{v_a}))=C(-C^{-1}_{k-c,n}(e_{v_a})) \implies C_{k-b}(e_{v_{k-b}})=C_{k-c}(e_{v_{k-c}})$$ This means both $r_{v_{k-b}}$ and $r_{v_{k-c}}$ are equal. However, we already showed that all the $r_i$ are distinct in a green quiver. Thus, there is some $k$ for any monomial $p$ so that the only occurrence of $p$ has to be at mutation $n-k$ after stabilizing the corresponding $r_i$, assuming $n\equiv a \mod p$. Now, take the max of this value $k$ (call it $\max(k)$ for all monomials $p|m$ over all the congruence classes $\mod p$ (hence over all $n$)). Since only monomials $p|m$ can contribute to $m$, we get that all contributors must be after $n-\max(k)$, as desired. 
\end{proof}

\begin{theorem}
Given a green quiver $Q$, and a periodic mutation sequence $v_1,\ldots,v_p,\ldots$, for any $i$, $S_i,S_{i+p},\ldots$ converges as a formal power series.
\end{theorem}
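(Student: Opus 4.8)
The plan is to show that the sequence of deformed $F$-polynomials $S_i, S_{i+p}, S_{i+2p}, \ldots$ is eventually constant on every fixed monomial, which is precisely what convergence as a formal power series means. Since a formal power series converges exactly when each coefficient stabilizes, it suffices to fix an arbitrary monomial $m$ and prove that its coefficient in $S_{i+jp}$ is independent of $j$ once $j$ is large enough.

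First I would apply the previous lemma to obtain the uniform constant $c = \max(k)$: for the monomial $m$, any sequence $w_1, \ldots, w_k$ with $(-C_n^{-1}(r_{w_1})) \cdots (-C_n^{-1}(r_{w_k})) = m$ must satisfy $n - c \leq w_1 \leq \cdots \leq w_k \leq n$. The key consequence is that the contributing indices all lie in a window of bounded width $c$ just below $n$. I would then re-index these contributors by their \emph{distance from the end}, setting $u_j = n - w_j$, so that each $u_j$ lies in the fixed range $[0, c]$ regardless of $n$. Combined with the corollary for $S_n$, which writes the coefficient of $m$ as a sum over sequences $0 \leq w_1 \leq \cdots \leq w_k \leq n-1$ of $\phi(\sf{w}) W(n, n-w_k, \ldots, n-w_1)$ where the monomial is built from the stabilized $-C_n^{-1}(r_{w_i})$, this shows that only finitely many ``shapes'' of contributing sequences survive.

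The heart of the argument is periodicity: because the mutation sequence is $p$-periodic and $n \equiv i \pmod p$ is fixed along the subsequence, the local quiver data governing the relevant $a_{u,v}$ and $b_{u,v}$ values—and hence each factor $s_{n-w_i} + \sum_{j}(-a_{w_i,w_j} + b_{w_i,w_j})$ in $W$—depends only on the differences $w_j - w_i$ (equivalently on the $u_j$), not on $n$ itself, once $n$ exceeds $c$. I would make this precise by invoking the interpretation of $C_{m,n}^{-1}$ entries as counting arrows after a shifted mutation sequence with the same color pattern, so that $C_{n-u_j, n}^{-1}$ is governed by the same periodic window for all large $n$ in the congruence class. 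Thus both the stabilized monomial $\prod_i (-C_n^{-1}(r_{w_i}))$ and the weight $W$ become functions of the $u_j$ alone, and the coefficient of $m$ in $S_{i+jp}$ stabilizes.

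The main obstacle I anticipate is rigorously establishing this translation-invariance of the $C$-matrix-between-steps entries: I must verify that $-C_{n-u,n}^{-1}(r_{n-u})$, as a monomial, and the values $a_{n-u_i, n-u_j}$, $b_{n-u_i, n-u_j}$ are genuinely independent of $n$ within a congruence class once $n$ is large. This requires carefully handling the ``color sequence must be the same'' caveat flagged in the formula discussion—confirming that greenness of the entire sequence guarantees the color pattern in the relevant window is stable, so no sign corrections intrude. The finiteness of contributing sequences from the window bound, together with the fact that each contributes a fixed value, then immediately yields that each coefficient is eventually constant, completing the proof of convergence.
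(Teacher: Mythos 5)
Your proposal follows essentially the same route as the paper's proof: fix a monomial, use the preceding lemma to confine all contributing sequences to a window of width $c$ below $n$, and then invoke $p$-periodicity to argue that the entries $C^{-1}_{n-w,n}$ (and hence the weights and stabilized monomials) depend only on the offsets $w$ and the congruence class of $n$ modulo $p$, so each coefficient is eventually constant. Your explicit flagging of the translation-invariance and color-pattern issue is a point the paper largely asserts rather than verifies, but the underlying argument is the same.
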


\begin{proof}
Consider any monomial $m$ and we show its coefficient converges. Take $n$ larger than the $c$ value for monomial $m$ from the previous lemma. Thus, we have our equation: $$S_n = \sum_{0\leq w_1\ldots w_k\leq n-1} \phi(w_1,\ldots,w_k)W(n,n-w_k,\ldots,n-w_1) \prod_{i=1}^{v}y_i^{\sum_{j=1}^{k}\delta_{n-w_i,n}C^{-1}_{n-w_i,n}[j,v_{n-w_i}]}$$ which can be rewritten as $$S_n = \sum_{0\leq w_1\ldots w_k\leq c} \phi(w_1,\ldots,w_k)W(n,n-w_k,\ldots,n-w_1) \prod_{i=1}^{v}y_i^{\sum_{j=1}^{k}\delta_{n-w_j,n}C^{-1}_{n-w_j,n}[j,v_{n-w_j}]}$$ It would be clear these expressions were equal for sets of $n$ where we could show $C^{-1}_{n-w,n}$ was only dependent on $w$; i.e. for a fixed $w$, the same regardless of $n$. However, this is a property true of all $n$ in a congruence class $\mod p$. Thus, the expression is in fact equal for all $n > c$. 
\end{proof}

Recall from before that stability was originally conjectured by Eager and Franco in their paper \cite{EF}, using the $dP1$ quiver. As a green quiver, our paper proves this case. We also write an explicit formula for the deformed $F$-polynomial in the proceeding section.

\section{Exact Formulas for Deformed $F$-Polynomials}

In this section, we present some exact formulas for these limit deformed $F$-polynomials we have now shown to exist. We restrict to the green quiver case, since that's where we have shown convergence.

\subsection{Symmetric Quivers}
For this subsection, we write the exact deformed $F$-polynomials for symmetric quivers. It does not illustrate much, but allows for simpler proofs in proceeding sections.

\begin{theorem}\label{syms}
Let $S_n$ be the deformed $F$-polynomial for the symmetric quiver $Q$. Let $W$ be the set of (possibly empty) sequences $\sf{w}$ with $0\leq w_1 \leq \ldots \leq w_k\leq n-1$. Then $$S_n=\sum_{\sf{w}\in W} \phi(\sf{w}) \left( \prod_{i=1}^{k}s_{n-w_i}+\sum_{j=i}^{k}-s_{w_j-w_i}+s'_{w_j-w_i} \right)\prod_{i=1}^{v}y_{v_n+1-i}^{\sum_{j=1}^{k}s_{n-w_j+1-i}}.$$
\end{theorem}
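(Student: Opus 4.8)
The plan is to specialize the general deformed-$F$-polynomial formula (the corollary following Theorem \ref{formula}) to the symmetric setting, in exactly the way Theorem \ref{sym} specialized the main formula of Theorem \ref{formula}. Concretely, I would start from
$$S_n = \sum_{\sf{w} \in W}\phi(\sf{w})\,W(n, n-w_k, \dots, n-w_1)\prod_{i=1}^{k} C_n^{-1}(r_{w_i}),$$
with $\sf{w}$ ranging over $0 \le w_1 \le \dots \le w_k \le n-1$, and rewrite both the scalar factor $W(n,n-w_k,\dots,n-w_1)$ and the stabilized monomials $C_n^{-1}(r_{w_i})$ using the structural facts already isolated in the proof of Theorem \ref{sym}: reversibility of the base quiver gives $C_k^{-1}=C_k$ after the vertex reversal $i \mapsto v+1-i$; cyclicity gives $C_{i,k}=C_{k-i}$ after a cyclic shift; and hence $a_{i,k}=s_{k-i}$, $b_{i,k}=s'_{k-i}$, together with $a_{0,i}=s_i$, $b_{0,i}=s'_i$ and the monomial description of Lemma \ref{symr}. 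Equivalently one may view this as applying $-C_n^{-1}$ directly to the output of Theorem \ref{sym}; since a matrix action on a polynomial transforms only the exponent vectors and leaves coefficients fixed (and $C_n$ is invertible, so distinct monomials stay distinct), the scalar attached to each sequence is inherited unchanged from $F_n$ and only the accompanying monomial is altered.

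There are then two computations. For the scalar factor I would substitute $a_{i,k}=s_{k-i}$ and $b_{i,k}=s'_{k-i}$ into $W(n,n-w_k,\dots,n-w_1)$; because the inputs to $W$ are the time-reflected indices $n-w_{k+1-m}$, the diagonal entries become $a_{n-w_{k+1-m},\,n}=s_{w_{k+1-m}}$ and the off-diagonal entries become $s$- and $s'$-values of differences of the $w$'s, and reversing the order of the product reorganizes this into the stated product $\prod_i\big(s_{n-w_i}+\sum_j(-s_{w_j-w_i}+s'_{w_j-w_i})\big)$. For the monomial factor, the key point is that since the quiver is green we have $\delta_{n-w_i,n}=1$, and writing $C^{-1}_{n-w_i,n}=C_n^{-1}C_{n-w_i}$ while noting that the $v_{n-w_i}$-th column of $C_{n-w_i}$ is precisely the exponent vector of $r_{n-w_i}$ (Lemma \ref{symr}), each stabilized exponent is an entry of $-C_n^{-1}$ applied to an $r$-monomial; reversibility then turns this into a vertex-reversed $r$-monomial, producing the exponents attached to $y_{v_n+1-i}$ claimed in the statement.

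The main obstacle is the bookkeeping of two intertwined reindexings: the time reflection $w \mapsto n-w$, which reverses the order of the sequence $\sf{w}$ and accounts for the appearance of reflected indices in the scalar factor, and the spatial reversal $i \mapsto v_n+1-i$ coming from reversibility of the base quiver. I would isolate these by first establishing the single-monomial identity $-C_n^{-1}(r_m)=\rho(r_{m'})$, where $\rho$ is the vertex reversal $y_i\mapsto y_{v_n+1-i}$ and $m'$ is the reflection of $m$ about the endpoint $n$ (with the $s$-index reflected accordingly), and checking it against a small instance such as $K_2$ to pin down the exact off-by-one in the $s$-indexing before assembling the full product. Once this clean ``reflect time around $n$ and reverse the vertices'' statement is fixed, the coefficient inheritance from Theorem \ref{sym} and the final re-indexing of the outer sum over $\{0,\dots,n-1\}$ are routine.
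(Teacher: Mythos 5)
Your proposal is correct and follows essentially the same route as the paper: the paper also obtains $S_n$ by applying $-C_n^{-1}$ to the output of Theorem \ref{sym}, inheriting the scalar factors unchanged, and then uses cyclicity and reversibility to identify $-C_n^{-1}(r_{w_j})=C^{-1}_{w_j,n}(e_{v_{w_j}})$ with the vertex-reversed monomial $r_{n-w_j}$, i.e.\ exactly your ``reflect time around $n$ and reverse the vertices'' identity, before invoking Lemma \ref{symr} to write out the exponents.
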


\begin{proof}
For this proof, consider vertices by their value $\mod v$. Recall the formula from Theorem \ref{sym} which states $$F_n=\sum_{\sf{w}\in W} \phi(\sf{w}) \left( \prod_{i=1}^{k}s_{n-w_i}+\sum_{j=i}^{k}-s_{w_j-w_i}+s'_{w_j-w_i} \right)\prod_{i=1}^{v}y_i^{\sum_{j=1}^{k}s_{w_j+1-i}}.$$ We simply need to deform the exponents of the monomials. By Lemma \ref{symr}, $$r_k=\prod_{i=1}^{v}y_i^{s_{k+1-i}}$$ so the latter part of the expression can be rewritten as $$\prod_{j=1}^{k}r_{w_j}.$$ Now we deform the exponents to get $$\prod_{j=1}^{k}-C^{-1}_n(r_{w_j})$$ which is $$\prod_{j=1}^{k}C^{-1}_{w_j,n}(e_{v_{w_j}}).$$ Now since the quiver is symmetric, each $A_i$ is the same, with the vertices cyclic shifted according to $v_i$. Thus $C^{-1}_{w_j,n}$ will just be $A_{v_n}A_{v_n-1} \cdots A_{w_j}$ where there can be many periods in between. Mapping $v_n$ to vertex 1, $v_n-1$ to vertex 2 and so on, we get that $C^{-1}_{w_j,n}(e_{v_{w_j}})$ is $C_{0,n-w_j}(e_{v_n+1-v_{w_j}})$ under the new labeling. However, this is just $r_{n-w_j}$. Then we can use Lemma \ref{symr} to see that this is $$r_{n-w_j}=\prod_{i=1}^{v}y_i^{s_{n-w_j+1-i}}.$$ Returning the vertices to their original labels, we get that each original $r_k$ deforms to $\prod_{i=1}^{v}y_{v_n+1-i}^{s_{n-k+1-i}}$ and thus $$S_n=\sum_{\sf{w}\in W} \phi(\sf{w}) \left( \prod_{i=1}^{k}s_{n-w_i}+\sum_{j=i}^{k}-s_{w_j-w_i}+s'_{w_j-w_i} \right)\prod_{i=1}^{v}y_{v_n+1-i}^{\sum_{j=1}^{k}s_{n-w_j+1-i}}.$$
\end{proof}

\subsection{$\wt{A_{1,r}}$}

\begin{definition}
The quiver $\wt{A_{1,r}}$ is on vertices $1,\ldots,r+1$ with the following edges: 
\begin{itemize}
\item An edge from $i\to i+1$ for $1\leq i \leq r$.
\item An edge from $r+1 \to 1$.
\end{itemize}
\end{definition}

In this section, we look for explicit formulas for deformed $F$-polynomials for the quiver $\wt{A_{1,r}}$. In her paper \cite{Z}, Grace Zhang looks at the quiver $\wt{A_{1,1}}$. She shows the following. Let $S_{\infty}$ be the deformed $F$-polynomial for the quiver $\wt{A_{1,2}}$. Then, up to swapping $y_1$ and $y_2$, we have $$S_{\infty}=1+\frac{y_2}{(1-y_1y_2)^2}.$$

Our theorem generalizes this result.

\begin{theorem}
Let $S_{\infty}$ be the deformed $F$-polynomial for the quiver $\wt{A_{1,r}}$. Then $$S_{\infty}=1+\frac{(y_{r+1}+y_{r+1}y_{r}+\ldots+y_{r+1}\cdots y_2)}{(1-y_1\cdots y_{r+1})^2}$$ up to a cyclic shift of variables.
\end{theorem}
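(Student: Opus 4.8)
The goal is to compute the limiting deformed $F$-polynomial $S_\infty$ for $\wt{A_{1,r}}$ in closed form. My strategy is to recognize that $\wt{A_{1,r}}$ is a symmetric quiver (with base quiver the oriented $(r+1)$-cycle, which is reversible, cyclic, and entirely green), so that Theorem \ref{syms} applies directly. First I would identify the relevant sequence $\sf{s}$ and its companion $\sf{s}'$ for this specific quiver. Since vertex $1$ has a single arrow out (to vertex $2$) and a single arrow in (from vertex $r+1$), the functions $f$ and $g$ from the symmetric-quiver definition become very simple, essentially encoding a shift. I expect $s_i$ to satisfy a trivial recurrence making it eventually constant (or $0/1$-valued) — concretely I anticipate $s_i = 1$ for $i \geq 0$ in appropriate ranges and $s_i = 0$ for $i < 0$, reflecting that there is only one ``part'' available — and $s'_i$ to be a correspondingly shifted indicator.

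The main computation is then to substitute these explicit $s_i, s'_i$ into the formula of Theorem \ref{syms}, take the limit $n \to \infty$ (justified by the convergence theorem for green periodic quivers proved earlier), and resum the resulting series. I would organize the sum by the multiplicity structure of the sequence $\sf{w}$: because $s_i$ is essentially an indicator, the inner product $\prod_i\bigl(s_{n-w_i} + \sum_{j\ge i}(-s_{w_j-w_i}+s'_{w_j-w_i})\bigr)$ collapses dramatically, with most cross terms vanishing. The $\phi(\sf{w})$ factor converts sums over nondecreasing sequences into sums over multisets, and I expect the surviving contributions to assemble into a geometric-type series. The appearance of $(1-y_1\cdots y_{r+1})^2$ in the denominator strongly suggests that after deformation each $r_i$ becomes a monomial in the single ``loop'' variable $y_1\cdots y_{r+1}$ together with a bounded correction, so that the double pole arises from differentiating (or squaring) a geometric series $\sum_k k\,(y_1\cdots y_{r+1})^k$ or equivalently $\sum_k \binom{k+1}{1}(\cdots)^k = (1-\cdots)^{-2}$.

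The hard part will be controlling the deformed exponents $\prod_{i=1}^v y_{v_n+1-i}^{\sum_j s_{n-w_j+1-i}}$ and verifying that, after the cyclic relabeling inherent in Theorem \ref{syms}, the monomials collect precisely into the numerator $y_{r+1}+y_{r+1}y_r+\cdots+y_{r+1}\cdots y_2$ over the squared denominator. In particular I must track carefully how a single deformed $r_{n-w_j}$ contributes a ``staircase'' monomial $y_{r+1}y_r\cdots y_{r+1-t}$ (for some offset $t$ determined by $w_j \bmod (r+1)$) times a power of the loop monomial $y_1\cdots y_{r+1}$. I would verify the base case $r=1$ against Zhang's formula $1 + y_2/(1-y_1y_2)^2$ as a sanity check, and then confirm that summing the staircase contributions over residues $2,\ldots,r+1$ reproduces exactly the claimed numerator. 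The combinatorial bookkeeping of which multisets $\sf{w}$ survive, and matching their total loop-degree against the $(1-y_1\cdots y_{r+1})^{-2}$ expansion, is where the real care is needed; everything else is routine substitution into Theorem \ref{syms}.
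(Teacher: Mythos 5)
Your skeleton --- recognize $\wt{A_{1,r}}$ as a symmetric quiver, apply Theorem \ref{syms}, pass to the limit via the green-periodic convergence theorem, and resum --- is the same as the paper's. But the engine you propose for collapsing the sum rests on a false premise, and the one lemma that actually makes the computation work is missing. The sequence $\sf{s}$ for this quiver is \emph{not} an indicator or eventually constant: it is $s_i = \lceil i/r \rceil$, which grows linearly without bound. Consequently the cross terms $-s_{w_j-w_i}+s'_{w_j-w_i}$ do not ``mostly vanish,'' and organizing by the multiplicity structure of $\sf{w}$ does not by itself collapse anything. Moreover, the factor producing the double pole is not a multiset count: it is the coefficient $s_q=\lceil q/r\rceil$ attached to a \emph{single} surviving length-one sequence, so that $S_\infty - 1$ is literally $\sum_{q\ge 1} s_q\cdot(\text{monomial})$, i.e.\ a $\sum_e (e+1)x^e$-type series --- the linear growth of $\sf{s}$ is exactly where $(1-y_1\cdots y_{r+1})^{-2}$ comes from.

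Second, and more importantly, you never address why sequences $\sf{w}$ of length at least $2$ contribute nothing in the limit; this is the heart of the paper's argument. The paper shows: (i) since $\sf{s}$ increments once every $r$ steps while there are $r+1$ vertices, the difference between the exponents of $y_{v_n}$ and $y_{v_n+1}$ in the term attached to $\sf{w}$ equals $k=|\sf{w}|$, so every sequence contributing to a fixed monomial has the same length; (ii) if a sequence of length $\ge 2$ contributed to a monomial surviving in the limit, then translating its indices by multiples of the period $r+1$ would produce, back in the un-deformed polynomial $F_{n+n(r+1)}$, a monomial whose degree exceeds the degree bound, forcing coefficient $0$; hence all such contributions eventually drop out. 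Only after this reduction is one left with the empty sequence (giving the $1$) plus the length-one sequences, whose exponent vectors are precisely the nonincreasing staircases with top entry one more than bottom entry, at which point the geometric resummation you describe goes through and matches Zhang's $r=1$ case. Without step (ii), your plan has no mechanism to rule out the infinitely many longer multisets that a priori contribute to each monomial, so the resummation cannot be carried out as written.
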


\begin{proof}
Let $W$ be the set of sequences $0\leq w_1\leq \ldots \leq n-1$. By Theorem \ref{syms} looking at indices modulo $r$ we know that $$S_n=\sum_{\sf{w}\in W} \phi(\sf{w}) \left( \prod_{i=1}^{k}s_{n-w_i}+\sum_{j=i}^{k}-s_{w_j-w_i}+s'_{w_j-w_i} \right)\prod_{i=1}^{v}y_{v_n+1-i}^{\sum_{j=1}^{k}s_{n-w_j+1-i}}$$ where $s_i=\lc \frac{i}{r} \rc$. 

Consider any monomial $m$ that we are trying to find the coefficient of in the stabilized polynomial which has nonzero coefficient. We claim that the exponent of $y_{v_n}$ is at most one greater than the exponent of $y_{v_n+1}$ in any term $m$ with nonzero coefficient. Because $\sf{s}$ increases every $r$ terms and there are $r+1$ vertices, the difference between the exponent of $y_{v_n}$ and $y_{v_n+1}$ is going to be $k$. This also means that all sequences creating a given monomial are all the same length. Thus, we just have to show that the only terms with nonzero coefficient in the limit are the ones with all corresponding sequences $\sf{w}$ having length 1. Assume some sequence $\sf{w}$ with length greater than 1 corresponds to some monomial which has nonzero coefficient in the limit, specifically for $S_n$ and all future $S_{n+k(r+1)}$. In $S_n$ it corresponds to $w_1,\ldots,w_k$. Then consider the polynomial $S_{n+n(r+1)}$. The sequences $w_1+n(r+1),\ldots,w_k+n(r+1)$ would contribute to this monomial in $S_{n+n(r+1)}$. However, in the original un-deformed polynomial, this would mean the corresponding sequence $r_{w_1+n(r+1)}\cdots r_{w_k+n(r+1)}$ products to the corresponding term. However, this terms clearly has degree too large in the original polynomial, so must have 0 coefficient. As such, any term of that form will eventually drop out of the limit. Therefore, it suffices to sum over all sequences $\sf{w}$ with length 0 or 1. The sequences with length 0 only create the term 1. Letting $S'_{n}$ be the deformed polynomial summing over only length 0 and 1 sequences, we get $$S'_{n}=1+\sum_{1\leq w \leq n} s_{n-w}\prod_{i=1}^{v}y_{v_n+1-i}^{s_{n-w+1-i}}$$ where $s_i=\lc \frac{i}{r} \rc$. Since we only care about the $y_i$ up to cyclic shift, and instead summing over the length 1 sequences $\sf{q}$ with $1 \leq q_1 \leq n$ mapping $\sf{w}$ to $n-\sf{q}$, we can rewrite the above as $$S'_{n}=\sum_{1\leq q \leq n} s_{q}\prod_{i=1}^{v}y_{v+1-i}^{s_{q+1-i}}$$ up to cyclic shift of variables, and taking the limit as $n\to \infty$ gives $$S'_{\infty}=\sum_{1\leq q} s_{q}\prod_{i=1}^{v}y_{v+1-i}^{s_{q+1-i}}.$$ A sequence of exponents occurs if and only if the exponent of $y_v$ is one more than the exponent of $y_1$ and the exponent sequence is nonstrictly decreasing. Thus, we get $$S'_{\infty}=\sum_{0\leq e} (e+1) \left(y_{r+1}^{e+1}y_r^e\cdots y_1^e + y_{r+1}^{e+1} + y_r^{e+1}\cdots y_r^e +\ldots + y_{r+1}^{e+1} \cdots y_2^{e+1}y_1^e \right).$$ Because the limits of the $S'$ and the $S$ are the same, this simplifies to $$S_{\infty}=1+\frac{y_{r+1}(1+y_{r}+y_{r}y_{r-1}+\ldots+y_{r}\cdots y_2)}{(1-y_1\cdots y_{r+1})^2}$$ as desired.
\end{proof}

\subsection{$K_r$}

In this section, we investigate the deformed $F$-polynomial for the $K_r$ quiver. It is worthwhile to note that $K_2=\wt{A_{1,1}}$. As such, this case can be viewed as a different generalization of the Zhang's result for the $\wt{A_{1,1}}$ quiver. We redefine the relevant sequences.

\begin{definition}
Define the sequence $s$ as $\ldots,s_{-2}=s_{-1}=0, s_0=1$ and $s_{k+2}=rs_{k+1}-s_k$ for $k\ge 0$. Let $p=\frac{r+\sqrt{r^2-4}}{2}$ Let the norm at $b$ of a sequence $1\leq w_1 \leq \ldots\leq w_k$ be $$N_b(\sf{w})=\sum_{i=1}^{k} b^{w_i}.$$ The explicit value of any given $s_i$ for $i\geq 0$ can be calculated as follows: $$s_i = \frac{1}{\sqrt{r^2-4}}p^i - \frac{1}{\sqrt{r^2-4}}p^{-i}.$$
\end{definition}

\begin{theorem}
The deformed $F$-polynomial $S_{\infty}$ is calculated as follows. Let $Q$ be the set of (possibly empty) sequences $\sf{w}$ with $0\leq w_1\leq \ldots\leq w_k$ with $N_{1/p}(\sf{w})$ at most 1. $$S_{\infty}=\sum_{\sf{w}\in W} \phi(\sf{w}) \prod_{i=1}^{k} \left( s_{w_i} - \sum_{j=1}^{i} s_{w_i-w_j} + s_{w_i-w_j-2} \right)y_1^{\sum_{i=1}^{k}s_{w_i}}y_1^{\sum_{i=1}^{k}s_{w_i-1}}$$ Alternately, $$S_{\infty}=\sum_{\sf{w}\in W} \phi(\sf{w}) \prod_{i=1}^{k} \left( s_{w_i} - \sum_{j=1}^{i} s_{w_i-w_j} + s_{w_i-w_j-2} \right)y_1^{\lf \frac{1}{\sqrt{r^2-4}}pN_p(q)\rf}y_2^{\lf \frac{1}{\sqrt{r^2-4}}N_p(\sf{w})\rf}$$ up to a permutation of variables.
\end{theorem}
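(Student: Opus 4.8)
The plan is to specialize the symmetric-quiver deformed $F$-polynomial formula from Theorem~\ref{syms} to the $K_r$ quiver, then take the limit $n\to\infty$ and show which sequences survive. First I would record the relevant sequence data: for $K_r$ we have $s'_i = -s_{i-2}$ (as established in the earlier $K_r$ section), and the closed form $s_i = \frac{1}{\sqrt{r^2-4}}(p^i - p^{-i})$ with $p = \frac{r+\sqrt{r^2-4}}{2}$. Plugging these into Theorem~\ref{syms} and reindexing the sequence $\sf{w}$ by $q_i = n - w_{k+1-i}$ (so the indices are measured backward from the termination point $n$) converts the coefficient factor into $\prod_{i=1}^{k}\bigl(s_{w_i} - \sum_{j=1}^{i} s_{w_i - w_j} + s_{w_i - w_j - 2}\bigr)$ and the monomial exponents into $\sum_i s_{w_i}$ and $\sum_i s_{w_i - 1}$ in the two variables, up to the cyclic shift of variables permitted in the statement.

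The heart of the argument is the convergence/truncation step: I must show that as $n\to\infty$ only finitely many sequences contribute, and that the surviving set is exactly those $\sf{w}$ with $N_{1/p}(\sf{w}) = \sum_i p^{-w_i} \le 1$. The key observation is that the exponent of $y_1$ in the monomial attached to $\sf{w}$ is $\sum_i s_{w_i} \approx \frac{1}{\sqrt{r^2-4}}\sum_i p^{w_i}$ once indices are measured from $n$; for this to stay bounded as $n$ grows (equivalently, for the monomial to remain of bounded degree and not fall out of the limit the way overlarge-degree terms do in the $\wt{A_{1,r}}$ proof), the backward indices must satisfy a geometric-decay condition. After the $q_i = n - w_i$ reindexing, the norm condition $N_{1/p}(\sf{w}) \le 1$ is precisely the condition that the total degree contributed does not exceed the degree of the undeformed $F$-polynomial, so that the coefficient is genuinely nonzero rather than being a cancellation artifact. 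I would establish this by the same mechanism as in the $\wt{A_{1,r}}$ argument: a sequence whose backward indices are too large corresponds, in the \emph{undeformed} polynomial, to a product $\prod r_{w_i}$ of excessive degree, which must have zero coefficient by the degree bound $\sum_i s_{w_i - 1} \le s_{n-1}$ noted after the $K_r$ $F$-polynomial theorem, hence that term eventually drops out of the limit.

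For the second, alternate monomial expression, I would use the explicit formula $s_i = \frac{1}{\sqrt{r^2-4}}(p^i - p^{-i})$ to evaluate the exponents $\sum_i s_{w_i}$ and $\sum_i s_{w_i-1}$ directly in terms of the norm $N_p(\sf{w}) = \sum_i p^{w_i}$. In the limit the $p^{-w_i}$ tails are negligible (they contribute strictly less than one to an integer exponent), so each exponent equals the floor of the corresponding leading term: $\lf \frac{1}{\sqrt{r^2-4}} p\, N_p(\sf{w}) \rf$ for $y_1$ and $\lf \frac{1}{\sqrt{r^2-4}} N_p(\sf{w}) \rf$ for $y_2$, where the factor of $p$ in the first comes from the shift between $s_{w_i}$ and $s_{w_i-1}$. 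The two forms then agree up to permutation of variables.

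I expect the main obstacle to be making the truncation rigorous: showing cleanly that the set of surviving sequences is \emph{exactly} $\{N_{1/p}(\sf{w}) \le 1\}$ rather than some larger or smaller set. One direction (sequences violating the bound drop out) follows from the degree argument above, but the converse — that every sequence satisfying the norm bound contributes a nonzero, stable coefficient in the limit and that no cancellation collapses the sum — requires care, since the coefficient factor $\prod_i(s_{w_i} - \sum_j(s_{w_i-w_j}-s_{w_i-w_j-2}))$ could in principle vanish. Controlling the interplay between the irrational threshold $p^{-1}$ and the integer-valued exponents (via the floor functions) is the delicate point, and I would handle it by carefully tracking that the $p^{-w_i}$ error terms never push an exponent across an integer boundary in the stable regime.
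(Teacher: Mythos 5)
Your proposal follows essentially the same route as the paper: specialize Theorem~\ref{syms} to $K_r$ using $s'_i=-s_{i-2}$, truncate the sum by comparing the degree $\sum_i s_{n-w_i}$ of the corresponding undeformed term against the bound $s_{n-1}$ (which, via the closed form $s_i=\frac{1}{\sqrt{r^2-4}}(p^i-p^{-i})$, is exactly the condition $N_{1/p}(\sf{w})\le 1$ in the limit), and then derive the floor expressions by noting the $p^{-w_i}$ tail contributes an amount strictly between $0$ and $1$ to an integer. Your worry about the converse direction is not actually an obstacle, since the theorem merely sums over all norm-bounded sequences without claiming each contributes nonzero, and after reindexing the coefficient factor is independent of $n$, so stability per sequence is automatic.
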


\begin{proof}
Recall the statement of our formula from Theorem \ref{syms}: $$S_n=\sum_{\sf{w}\in W} \phi(w_1,\ldots,w_k) \left( \prod_{i=1}^{k}s_{n-w_i}+\sum_{j=i}^{k}-s_{w_j-w_i}+s'_{w_j-w_i} \right)\prod_{i=1}^{v}y_{v_n+1-i}^{\sum_{j=1}^{k}s_{n-w_j+1-i}}.$$ Noting that there are only two vertices, this simplifies to $$S_n = \sum_{1\leq w_1\ldots w_k\leq n} \phi(w_1,\ldots,w_k) \prod_{i=1}^{k} \left( (s_{w_i}) - \sum_{j=1}^{i-1} (s_{w_j-w_i}+s_{w_j-w_i-2}) \right) y_1^{\sum_{j=1}^{k}s_{w_i}}y_2^{\sum_{j=1}^{k}s_{w_i-1}}$$ where $y_1$ and $y_2$ are swapped if $n$ is even. We ignore this distinction in the proof that follows. This is exactly our desired expression, except we need to limit the set of sequences we are summing over, using the norm condition. We take a monomial $m$ with degree in $y_1$ being $d$. Then, all possible sequences $w$ that allow for this monomial have at most $d$ terms and all $w_i<d$ ($d$ is a very crude bound, but suffices). Take $n$ extremely large. Now, we look at the degree of the term corresponding to $w$ in the original $F$-polynomial. We want to show both that if the norm is at most $1$, the degree will be this must be less than or equal to $s_{n-1}$, and if norm is greater than 1 it will be larger. The degree is $$\sum_{i=1}^{k}s_{n-w_i} = \frac{1}{\sqrt{r^2-4}}(p^nN_{1/p}(w) - \sum_{i=1}(1/p)^{n-w_i})$$ by using the explicit form of the $s_i$. $s_{n-1}$ can be expressed as $$\frac{1}{\sqrt{r^2-4}}p^n- \frac{1}{\sqrt{r^2-4}}p^{n}$$ When $n$ goes to infinity, the first term of each expression dominates, and this easily proves the desired result; the first is smaller when norm is less than 1, and larger otherwise. If the norm is exactly 1, the first terms of each expression are equal, so we compare the second term. The first expression clearly has a smaller first term, so the result holds in this case as well.

Now we prove the second part; that $$\sum_{j=1}^{k}s_{w_i}=\lf \frac{1}{\sqrt{r^2-4}}N_p(q)\rf$$ and $$\sum_{j=1}^{k}s_{w_i-1}=\lf \frac{1}{\sqrt{r^2-4}}pN_p(q)\rf$$ for a sequence with norm at most 1. For the first part, first rewrite $$\sum_{j=1}^{k}s_{w_i}=\frac{1}{\sqrt{r^2-4}}(pN_p(w)-p^{-1}N_{1/p}(w))$$ The second term (after expanding) is strictly greater than 0 and less than 1, and since the expression must be an integer, we have proven our desired result. We have $$\sum_{j=1}^{k}s_{w_i}=\frac{1}{p\sqrt{r^2-4}}(N_p(w)-N_{1/p}(w))$$ Again the second term after expanding has to be greater than 0 and less than 1, proving the desired result.
\end{proof}

We now find another formulation for the deformed $F$-polynomial of the quiver $K_r$ highlighting exactly which monomials have nonzero coefficient.

\begin{corollary}
The coefficient of the term $y_1^{\lf pa\rf +1}y_2^{a}$ for integers $a\ge 0$ are the only ones that are nonzero, and are expressed as follows. Let $W$ be the set of sequences $\sf{w}$ with $0\leq w_1 \leq \ldots\leq w_k$ and with $\sum_{\sf{w}\in W}s_{w_i-1}=a$. Then, the coefficient corresponding to that value of $a$ is $$\phi(w_1,\ldots,w_k) \prod_{i=1}^{k} \left( (s_{w_i}) - \sum_{j=1}^{i-1} (s_{w_j-w_i}+s_{w_j-w_i-2}) \right)$$
\end{corollary}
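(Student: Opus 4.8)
The plan is to read the corollary off the preceding theorem by analyzing \emph{only} the exponent vector of each monomial; the coefficient factor $\phi(w_1,\ldots,w_k)\prod_{i=1}^{k}\left(s_{w_i}-\sum_{j=1}^{i-1}(s_{w_j-w_i}+s_{w_j-w_i-2})\right)$ is transcribed verbatim from the theorem, so no new work is needed there. Thus the whole content of the corollary is the claim that the $y_1$-exponent of a monomial is forced by its $y_2$-exponent. Writing $D_1(\sf{w})=\sum_{i=1}^{k}s_{w_i}$ for the $y_1$-degree and $a=a(\sf{w})=\sum_{i=1}^{k}s_{w_i-1}$ for the $y_2$-degree, the goal reduces to the single arithmetic identity $D_1=\lf pa\rf+1$ for every nonempty admissible $\sf{w}$, together with the bookkeeping consequences it entails.

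The key step is to compute $D_1-pa$ in closed form. First I would use the closed form for $s$ in the correctly shifted normalization $s_k=\frac{1}{\sqrt{r^2-4}}\left(p^{k+1}-p^{-(k+1)}\right)$, so that $s_{-1}=0$ and $s_0=1$, which gives $D_1=\frac{1}{\sqrt{r^2-4}}\left(pN_p(\sf{w})-p^{-1}N_{1/p}(\sf{w})\right)$ and $a=\frac{1}{\sqrt{r^2-4}}\left(N_p(\sf{w})-N_{1/p}(\sf{w})\right)$, where $N_p$ and $N_{1/p}$ are the norms from the definition. Subtracting and invoking the two algebraic facts $p+p^{-1}=r$ and $p-p^{-1}=\sqrt{r^2-4}$ cancels the $N_p$-contributions and leaves the clean relation $D_1-pa=N_{1/p}(\sf{w})$. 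I should verify that this expansion remains valid when some $w_i=0$; it does, since the shifted closed form evaluates $s_{-1}$ to $0$ correctly.

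Finally I would convert this identity into the floor statement using integrality. Since $D_1$ is a nonnegative integer and, for a nonempty sequence, $N_{1/p}(\sf{w})$ is strictly positive and at most $1$ (positivity is automatic, and the upper bound $N_{1/p}(\sf{w})\le 1$ is exactly the constraint defining the index set $W$), the real number $pa=D_1-N_{1/p}(\sf{w})$ lies in $[D_1-1,D_1)$, whence $\lf pa\rf=D_1-1$ and $D_1=\lf pa\rf+1$. This simultaneously yields the three assertions of the corollary: every monomial coming from a nonempty sequence has the shape $y_1^{\lf pa\rf+1}y_2^{a}$; all sequences sharing a value of $a$ automatically share the same $y_1$-degree and so contribute to one and the same monomial, so the coefficient of $y_1^{\lf pa\rf+1}y_2^{a}$ is precisely the sum of the theorem's coefficient factors over all $\sf{w}$ with $\sum_i s_{w_i-1}=a$; and no monomial outside this family can appear, apart from the constant term $1$ produced by the empty sequence.

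The main obstacle I anticipate is this last, integrality step rather than the algebra: one must confirm that the norm threshold defining $W$ is exactly the bound $N_{1/p}(\sf{w})\le 1$ needed to pin the fractional part of $pa$, and one must separate the degenerate cases cleanly, namely the empty sequence, which produces the constant $1$ and falls outside the stated family, and the all-zeros sequence, forced to length one by the norm bound, which realizes $a=0$ as the monomial $y_1$ rather than as $1$. Checking the boundary case $N_{1/p}(\sf{w})=1$, where $pa$ equals the integer $D_1-1$ exactly, completes the verification.
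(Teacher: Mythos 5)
Your proposal is correct and follows essentially the same route as the paper: both arguments expand the $y_1$- and $y_2$-exponent sums via the closed form of $s_i$ in powers of $p$ and use the defining constraint $N_{1/p}(\sf{w})\le 1$ (together with positivity for nonempty $\sf{w}$) to pin down the floor. Your packaging of the computation as the single identity $D_1-pa=N_{1/p}(\sf{w})$ is tidier, and you correctly repair the index shift in the paper's stated closed form for $s_i$ (which as written would give $s_0=0$), but the underlying idea is identical.
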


\begin{proof}
We need to find $$\lf p\lf \frac{1}{\sqrt{r^2-4}}N_p(q)\rf\rf +1$$ Recall from the previous proof that our expression of $\lf \frac{1}{\sqrt{r^2-4}}N_p(q)\rf$ is also $$\frac{1}{\sqrt{r^2-4}}N_p(q)-\frac{1}{\sqrt{r^2-4}}N_{1/p}(q)$$ Multiplying by $p$ gives $$\frac{p}{\sqrt{r^2-4}}N_p(q)+\frac{p}{\sqrt{r^2-4}}N_{1/p}(q)$$ which is $$\sum_{i=1}^{|q|}s_{w_i}+\frac{1}{p\sqrt{r^2-4}}N_{1/p}(\sf{w})-\frac{1}{\sqrt{r^2-4}}N_{1/p}(\sf{w})$$ which is $$\sum_{i=1}^{|q|}s_{w_i}+\frac{p-1}{p}\frac{1}{\sqrt{r^2-4}}N_{1/p}(q)$$ The second term is non-strictly less than 1, but greater than 0, so taking the floor and adding 1 gives $\sum_{i=1}^{|q|}s_{w_i}$ as desired for the first part.

For the second part, we know that we get a given term by summing the expression $$\phi(w_1,\ldots,w_k) \prod_{i=1}^{k} \left( (s_{w_i}) - \sum_{j=1}^{i-1} (s_{w_j-w_i}+s_{w_j-w_i-2}) \right)$$ over sequences $\sf{w}$ with $0\leq w_1 \leq \ldots\leq w_k$ with $\sum_{\sf{w}\in W}s_{w_i-1}=a$. Thus, it suffices to prove that $a=\lf \frac{p}{\sqrt{r^2-4}}N_p(q) \rf$. Note that $$s_{w_i-1}=\frac{1}{p\sqrt{r^2-4}}N_p(\sf{w})-\frac{p}{\sqrt{r^2-4}}N_{1/p}(\sf{w}).$$ The second term is less than 1, so we get that the overall expression is just the floor of the first part, as desired.
\end{proof}

The deformed $F$-polynomial is interesting for two reasons: it allows for the convenient norm property to determine which monomials are included. Further, the renormalization has $y_1$ and $y_2$ uniquely determine each other.

\subsection{Gale Robinson Quivers}

We now look at the explicit deformed $F$-polynomial for the Gale Robinson Quivers. Essentially we just plug into our explicit formula and nothing extremely interesting happens, but it allows us to completely close the Eager and Franco's original conjecture data around the $dP1$ quiver.

\begin{theorem}
Take the quiver $G_{v,r,t}$. Define a sequence $\sf{s}$ with $s_i$ as the number of partitions of $i$ into parts $r,v-r$ (0 for negative values of $i$, 1 for 0). Let $W$ be the set of sequences $\sf{w}$ with $0\leq w_1 \leq \ldots \leq w_k$.
$$S_{\infty}=\sum_{\sf{w} \in W} \phi(w_1,\ldots,w_k)\prod_{i=1}^{k} \left( s_{w_i} + \sum_{j=1}^{i-1} (-s_{w_j-w_i}-s_{w_j-w_i-v}+s_{w_j-w_i-t}+s_{q_j-q_i-n+t}) \right) \prod_{i=1}^{v}y_i^{\sum_{j=1}^{k}s_{w_i-j+1}}$$
\end{theorem}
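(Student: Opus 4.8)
The plan is to derive this as a direct specialization of the deformed symmetric-quiver formula in Theorem \ref{syms}, exactly paralleling the way the undeformed Gale Robinson formula was obtained from Theorem \ref{sym}. The key structural input is the lemma establishing that $G_{v,r,t}$ is a symmetric quiver: it is reversible, the mutation sequence $1,\ldots,v,1,\ldots$ is cyclic, the quiver is entirely green, and only vertices $1+r$ and $v+1-r$ point out of vertex $1$ while only $1+t$ and $v+1-t$ point into it. Being green is what licenses invoking the convergence theorem from Section 6, so that $S_\infty$ exists and equals the limit of $S_n$ over a fixed congruence class mod $v$; being symmetric is what lets me apply Theorem \ref{syms} in the first place.

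First I would identify the two auxiliary sequences $s_i$ and $s'_i$ for this quiver. From the definition of $f$ and $g$ in terms of the base-quiver arrows at vertex $1$, the recurrence for $s_i$ is governed by the outgoing arrows $1\to 1+r$ and $1\to v+1-r$, which gives $s_i = s_{i-r} + s_{i-(v-r)}$, whose solution is precisely the count of partitions of $i$ into parts $r$ and $v-r$ (with the stated boundary values). The sequence $s'_i$ is read off from the incoming arrows $1+t\to 1$ and $v+1-t\to 1$, together with the $2$-cycle deletions, yielding $s'_i = s_{i-t} + s_{i-(v-t)} - s_{i-v}$, just as asserted in the undeformed Gale Robinson proof. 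These are the same $s,s'$ already used in that earlier theorem, so I can cite them rather than rederive.

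Next I would substitute these sequences directly into the statement of Theorem \ref{syms},
$$S_n=\sum_{\sf{w}\in W} \phi(\sf{w}) \left( \prod_{i=1}^{k}s_{n-w_i}+\sum_{j=i}^{k}-s_{w_j-w_i}+s'_{w_j-w_i} \right)\prod_{i=1}^{v}y_{v_n+1-i}^{\sum_{j=1}^{k}s_{n-w_j+1-i}}.$$
Expanding $s'_{w_j-w_i} = s_{w_j-w_i-t} + s_{w_j-w_i-v+t} - s_{w_j-w_i-v}$ inside the coefficient factor reproduces the four-term inner sum $-s_{w_j-w_i}-s_{w_j-w_i-v}+s_{w_j-w_i-t}+s_{w_j-w_i-v+t}$ appearing in the target formula. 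The remaining work is to take the $n\to\infty$ limit and recover the $y$-exponents in the clean indexed form $\prod_{i=1}^{v}y_i^{\sum_{j=1}^{k}s_{w_i-j+1}}$; this is where I would reindex the summation by replacing $w_i$ with $n-w_i$ (so that the sequences stabilize to the fixed window near the end of the mutation sequence, as guaranteed by the stabilization lemma in Section 6), absorb the cyclic relabeling $y_{v_n+1-i}$ into a permutation of variables, and let $s_{n-w_i}\to s_{w_i}$ under this substitution.

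The main obstacle I anticipate is purely bookkeeping in the limit and reindexing step rather than anything conceptually deep: one must check that after substituting $w_i \mapsto n - w_i$ the coefficient factors $s_{n-w_i}$ become $s_{w_i}$ and the exponent sums $s_{n-w_j+1-i}$ collapse to $s_{w_i-j+1}$ consistently, and that the cyclic shift in the vertex labeling $y_{v_n+1-i}$ can legitimately be discarded (the theorem statement tacitly works up to this permutation, exactly as in the $K_r$ and $\wt{A_{1,r}}$ cases). I would also verify that the convergence genuinely holds here by noting greenness, so that the finite-window truncation from the Section 6 stabilization lemma applies and the limit of $S_n$ over a residue class mod $v$ is well defined. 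Since everything reduces to substituting known sequences into an already-proven formula and performing a routine index flip, no new estimate is needed; the verification is essentially identical in spirit to the undeformed Gale Robinson computation and to Theorem \ref{syms} itself.
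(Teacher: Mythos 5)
Your proposal matches the paper's own proof: the paper likewise obtains this by specializing Theorem \ref{syms} to $G_{v,r,t}$ using the sequences $s_i$ and $s'_i = s_{i-t}+s_{i-v+t}-s_{i-v}$ already computed in the undeformed Gale Robinson theorem, and then taking the limit as $n\to\infty$. Your write-up is in fact more careful than the paper's (which compresses the reindexing $w_i\mapsto n-w_i$, the cyclic relabeling of the $y_i$, and the appeal to greenness for convergence into a single sentence), but it is the same argument.
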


\begin{proof}
We have by Theorem \ref{syms} that
$$S_n = \sum_{0\leq w_1\ldots w_k\leq n-1} \phi(w_1,\ldots,w_k)\prod_{i=1}^{k} \left( s_{w_i} + \sum_{j=1}^{i-1} (-s_{w_j-w_i}-s_{w_j-w_i-v}+s_{w_j-w_i-t}+s_{w_j-q_i-n+t}) \right) \prod_{i=1}^{v}y_i^{\sum_{j=1}^{k}s_{w_i-j+1}}$$
since that is how to calculate the corresponding sequences, and so taking the limit gives
$$S_{\infty}=\sum_{0\leq w_1 \ldots w_k} \phi(w_1,\ldots,w_k)\prod_{i=1}^{k} \left( s_{w_i} + \sum_{j=1}^{i-1} (-s_{w_j-w_i}-s_{w_j-w_i-v}+s_{w_j-w_i-t}+s_{w_j-w_i-n+t}) \right) \prod_{i=1}^{v}y_i^{\sum_{j=1}^{k}s_{w_i-j+1}}$$
\end{proof}

Now we find the exact deformed $F$-polynomial for the quiver $dP1=G_{4,2,1}$. If we can, it is always more interesting to calculate the coefficient of a given monomial directly, rather than summing up a number of terms that could possibly be contributing to the same polynomial. We will do this for the dP1 quiver, since the sequence $\sf{s}$ is easy to understand.

\begin{corollary}
Take the quiver $dP1$ and mutation sequence $1,2,3,4,1,2,\ldots$ which has $n$ terms. The coefficient of the term $y_1^ay_2^by_3^cy_4^d$ is calculated as follows. Let $W$ be the set of sequences of nonnegative integers and half-integers so that
\begin{itemize}
\item $a-c$ terms are integers.
\item $b-d$ terms are half-integers.
\item The sum of all the integer terms is $c$.
\item The sum of the floors of all the half integer terms is $d$.
\end{itemize}
Then the desired coefficient is $$\sum_{\sf{w}\in W}\phi(w_1,\ldots,w_k)\prod_{i=1}^{k} \left( \frac{(-1)^{2w_i}+1}{2}(w_i+1) - \sum_{j=1}^{i-1} (-1)^{2w_j-2w_i}(2w_j-2w_i + 4) \right)$$
\end{corollary}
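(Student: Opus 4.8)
The plan is to specialize the general Gale Robinson deformed $F$-polynomial theorem (the preceding theorem in this subsection, with $S_\infty$ expressed via the sequence $\sf{s}$) to the concrete parameters $(v,r,t)=(4,2,1)$, and then to repackage the resulting sum over nondecreasing sequences $\sf{w}$ into a direct coefficient extraction indexed by the exponent tuple $(a,b,c,d)$. First I would compute the partition sequence $\sf{s}$ explicitly for $dP1$: here $s_i$ counts partitions of $i$ into parts $r=2$ and $v-r=2$, i.e. partitions of $i$ into parts equal to $2$, so $s_i=\lf i/2\rf+1$ when $i$ is a nonnegative even integer and $s_i=0$ when $i$ is odd or negative. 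This parity structure is exactly what forces the indicator $\frac{(-1)^{2w_i}+1}{2}$ in the claimed formula, since the $w_i$ range over both integers and half-integers and $s_{w_i}$ survives only when $2w_i$ is even, i.e. $w_i$ is an integer.

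Next I would identify how the half-integer indexing arises. In the symmetric-quiver formula each $r_k$ deforms to $\prod_i y_i^{s_{\cdot}}$ with shifts by $1$ through $v=4$ vertices, and the substitution that sends the general sequence index $w$ to a half-integer is the natural reparametrization making the two ``partition-of-$2$'' progressions (one governing $(y_1,y_3)$, one governing $(y_2,y_4)$) interleave. Concretely, I would show that writing each term index as a nonnegative integer or half-integer cleanly separates the contributions to the four exponents: integer-indexed terms feed $y_1,y_3$ and half-integer-indexed terms feed $y_2,y_4$. Tracking the shift-by-$i$ in the exponent $\prod_i y_i^{\sum_j s_{w_i-j+1}}$ then yields precisely the four bookkeeping constraints in the statement: that $a-c$ counts the integer terms, $b-d$ counts the half-integer terms, the integer terms sum to $c$, and the floors of the half-integer terms sum to $d$. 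Each of these is a direct evaluation of $s_{w_i}$ versus $s_{w_i-2}$ (a drop of exactly $1$ in the ``$+1$'' part of $\lf \cdot/2\rf +1$), which produces the $w_i+1$ factor in the coefficient.

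For the coefficient itself I would plug the explicit $s$ values into the inner product $\prod_{i}\big(s_{w_i}+\sum_{j<i}(-s_{w_j-w_i}-s_{w_j-w_i-v}+s_{w_j-w_i-t}+s_{w_j-w_i-n+t})\big)$, take the limit $n\to\infty$ so the $s_{w_j-w_i-n+t}$ term vanishes (its index tends to $-\infty$), and simplify the remaining three terms at $(v,r,t)=(4,2,1)$. With $s_i=(\lf i/2\rf+1)$ on even nonnegative indices and $0$ otherwise, I expect $s_{w_i}$ to contribute the $\frac{(-1)^{2w_i}+1}{2}(w_i+1)$ factor and the difference $-s_{w_j-w_i}-s_{w_j-w_i-4}+s_{w_j-w_i-1}$ to collapse, after matching parities of $w_j-w_i$, into the single expression $(-1)^{2w_j-2w_i}(2w_j-2w_i+4)$ displayed in the corollary. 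This parity-and-telescoping simplification is the main obstacle: I must carefully verify that the half-integer shifts align so that $s_{w_j-w_i-t}$ with $t=1$ lands on an even index exactly when $-s_{w_j-w_i}$ and $-s_{w_j-w_i-v}$ do, and that the three affine-in-index quantities combine to the clean linear form; getting the sign $(-1)^{2w_j-2w_i}$ and the constant $+4$ right requires disciplined case analysis on whether $w_j-w_i$ is an integer or a half-integer.

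Finally I would assemble the pieces: the reindexing by integer/half-integer values converts the constrained sum over nondecreasing $\sf{w}$ in the general theorem into the sum over $W$ described in the corollary, the $\phi(\sf{w})$ multinomial factor carries over unchanged, and the simplified inner product gives the stated coefficient. I would close by noting consistency with the known limit behavior and, as a sanity check, confirming that this recovers $S_\infty$ term-by-term against the general theorem for small exponent tuples. The only genuinely delicate step is the parity-sensitive simplification of the $s$-differences; everything else is bookkeeping that the symmetric-quiver machinery (Theorem \ref{syms} and Lemma \ref{symr}) already supplies.
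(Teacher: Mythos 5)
Your overall route is the same as the paper's: specialize the Gale Robinson deformed-$F$-polynomial theorem to $(v,r,t)=(4,2,1)$, compute the partition sequence $s_i=\lfloor i/2\rfloor+1$ on nonnegative even indices (and $0$ otherwise), reindex by $q_i=w_i/2$ so that integer versus half-integer indices separate the $(y_1,y_3)$ and $(y_2,y_4)$ contributions, and read off the four bookkeeping constraints on $(a,b,c,d)$. That part of the plan is sound and is exactly what the paper does; indeed your value $q_i+1$ for $s_{2q_i}$ is the one consistent with the $(w_i+1)$ factor in the corollary.

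The one step that would fail is your simplification of the inner product. You propose to discard the term $s_{w_j-w_i-n+t}$ on the grounds that its index tends to $-\infty$; but that index is a typo in the displayed Gale Robinson formula. By the definition $s_i'=s_{i-t}+s_{i-v+t}-s_{i-v}$ (and the statement of the corresponding $F_n$ theorem for Gale Robinson quivers) the fourth term is $s_{w_j-w_i-v+t}=s_{w_j-w_i-3}$, which does not vanish in the limit. This matters: since $s$ is supported on even indices, when the difference $d=w_j-w_i$ (in the integer indexing) is odd the only surviving contributions are $s_{d-1}+s_{d-3}$, and both are needed for the sum to collapse to a single linear expression of the form $(2w_j-2w_i+4)$ as in the corollary; keeping only $s_{d-1}$ yields roughly half that value. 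So the three-term parity analysis you flag as the delicate step would not close as written. Everything else in your outline is the same bookkeeping the paper performs.
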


\begin{proof}
Let $s(k)$ be $k$ if $k$ is a positive integer, and 0 otherwise. Noting that $s_{k}$ as defined in the Gale Robinson quiver is $\frac{k}{2}$ if $k$ is even and 0 otherwise, plugging in gives:
$$S_{\infty}=\sum_{0\leq w_1 \ldots w_k, w_i} \phi(w_1,\ldots,w_k)\prod_{i=1}^{k} \left( \frac{((-1)^{w_i}+1)w_i}{4} - \sum_{j=1}^{i-1} (-1)^{w_j-w_i}(w_j-w_i + 4) \right) \prod_{i=1}^{4}y_i^{\sum_{j=1}^{k}\frac{(-1)^{w_i-j+1}+1}{4}}$$ 
Letting $q_i=\frac{w_i}{2}$ gives the following expression. $$S_{\infty}=\sum_{q\in Q} \phi(q_1,\ldots,q_k)\prod_{i=1}^{k} \left( q(w_i) - \sum_{j=1}^{i-1} (-1)^{2q_j-2q_i}(2q_j-2q_i + 4) \right) \prod_{i=1}^{4}y_i^{\sum_{j=1}^{k}s(q_j-\frac{1}{2}i)}$$ For any term $y_1^ay_2^by_3^cy_4^d$ it is contributed to by the sequences described in the corollary statement. Adding the contributions proves the desired result.
\end{proof}

\section{Future Research}

\subsection{The Formula}
One important avenue of future research involving our formula in Theorem \ref{formula} consists of finding more use cases for the formula. For example, it might be possible to group terms and use the formula to recover positivity of coefficients, at least in certain cases. We might be able to do this by grouping terms of the formula. 

Furthermore, we know that the Laurent Phenomenon proves that the $F$-polynomials are in fact polynomials. However, our formula has nontrivial expansion for the coefficient of all monomials. Thus, for monomials that are too large, or obviously otherwise have coefficient 0, we get an expression involving the $C$-matrix entries that must be zero. We have not yet investigated these; it might be interesting. It might even help with positivity; one might be able to say that if large terms have coefficient 0, the smaller terms must have coefficient positive. The following provides intuition for why such a proof might work.

\begin{lemma}\label{posex}
Take a quiver $Q$ and a mutation sequence $v_1,\ldots,v_n$. If each $r_i^k$ can only be constructed as a product of $r_i$'s, and not any of the other terms, then the coefficient of $r_i$ must be positive.
\end{lemma}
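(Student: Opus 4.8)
The plan is to extract, from the $F$-polynomial formula of Theorem~\ref{formula}, exactly the terms that contribute to the coefficient of $r_i$ itself (the single monomial, not a power), and to show this coefficient is $a_{i,n}$, which I will argue is strictly positive. Under the hypothesis, the monomial $r_i$ can only be built as a product of $r_j$'s by using the length-one sequence $\sf{w} = (i)$: any longer product $r_{w_1}\cdots r_{w_k}$ equal to $r_i$ would express $r_i$ as a product of terms, and the hypothesis that $r_i^k$ (hence in particular $r_i$) is constructible only from copies of $r_i$ forces each $w_j = i$, but then the total degree is $k$ copies of $r_i$, so $k=1$. Thus the only contributing sequence is $\sf{w}=(i)$, giving $\phi(i) = 1$ and $W(n,i) = a_{i,n}$. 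So the coefficient of $r_i$ in $F_n$ is exactly $a_{i,n} = D_{i,n}^{-1}[v_n, v_i]$ (equivalently $C_{i,n}^{-1}[v_n,v_i]$ in the skew-symmetric case).

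The remaining work is to show $a_{i,n} > 0$. The cleanest route is to invoke positivity of $F$-polynomial coefficients (proved in \cite{GHKK}), which guarantees the coefficient of the monomial $r_i$ is nonnegative; combined with the isolation argument above, the coefficient \emph{equals} $a_{i,n}$, so $a_{i,n} \geq 0$. To upgrade to strict positivity I would argue that the coefficient is in fact nonzero: $r_i$ genuinely appears as a monomial in $F_n$ because it is produced at mutation step $i$ (the term $r_i$ enters $F_i$ with coefficient $a_{i,i}=1$ via the recurrence $F_i V_{v_i} = \prod_{v_i \succ j} V_j + r_i \prod_{v_i \prec j} V_j$), and the hypothesis prevents it from later being cancelled or recombined, since no other product of $r_j$'s can equal $r_i$ and interfere. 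Hence $a_{i,n} \geq 1 > 0$.

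The step I expect to be the main obstacle is making the isolation argument fully rigorous, i.e. precisely translating the hypothesis ``each $r_i^k$ can only be constructed as a product of $r_i$'s'' into the statement that $\sf{w}=(i)$ is the unique contributing sequence. One must be careful because the $r_j$ are monomials in the $y$-variables and different sequences could in principle collide to the same monomial; the hypothesis is exactly what rules this out, but I would want to state clearly that it implies the map $\sf{w} \mapsto \prod_j r_{w_j}$ has $r_i$ in its image only via $\sf{w}=(i)$. A secondary subtlety is ensuring that nonnegativity of the isolated coefficient (rather than a cancellation among several sequences) really does pin down the sign of $a_{i,n}$; this is immediate once uniqueness of the contributing sequence is established, since then no cancellation is possible and the single term $W(n,i)=a_{i,n}$ must itself be the (nonnegative, indeed positive) coefficient.
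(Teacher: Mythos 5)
Your isolation of the coefficient of $r_i$ as the single contribution $\phi(i)\,W(n,i)=a_{i,n}$ matches the first line of the paper's argument, but from there the two proofs diverge, and yours misses the idea the lemma is built around. The paper does \emph{not} invoke the positivity theorem of \cite{GHKK}. Instead it uses the hypothesis for \emph{every} power $k$: since $r_i^k$ can only arise from the constant sequence $(i,\ldots,i)$, its coefficient is
$$\phi(i,\ldots,i)\,W(n,i,\ldots,i)=\frac{1}{k!}\prod_{m=1}^{k}\bigl(a_{i,n}-(k-m)(a_{i,i}-b_{i,i})\bigr)=\frac{1}{k!}\,a_{i,n}(a_{i,n}-1)\cdots(a_{i,n}-k+1)=\binom{a_{i,n}}{k},$$
and because $F_n$ is an honest polynomial of bounded degree these coefficients must vanish for all sufficiently large $k$, which forces $a_{i,n}$ to be a nonnegative integer. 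That is the entire point of the lemma: it sits in the discussion of how the formula might be used to \emph{recover} positivity from the vanishing of large-degree coefficients, so quoting \cite{GHKK} to supply the sign of $a_{i,n}$ is circular relative to that goal. The fact that your proof never uses the hypothesis beyond the case $k=1$ is the tell-tale sign that the intended mechanism lies elsewhere.

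Separately, your upgrade from $a_{i,n}\ge 0$ to $a_{i,n}>0$ is a genuine gap. That $r_i$ enters $F_i$ (the polynomial at vertex $v_i$) with coefficient $a_{i,i}=1$ says nothing about its coefficient in $F_n$, which is the polynomial at the generally different vertex $v_n$ and equals $a_{i,n}=D_{i,n}^{-1}[v_n,v_i]$; the claim that the hypothesis ``prevents it from being cancelled'' is not an argument that this matrix entry is nonzero. To be fair, the paper's own proof also only pins down $a_{i,n}\ge 0$, since $\binom{0}{k}=0$ for all $k\ge 1$, so the word ``positive'' in the statement should be read with that caveat in both treatments — but the nonnegativity itself should come from the vanishing of the coefficients of $r_i^k$, not from the positivity theorem.
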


\begin{proof}
The coefficient of $r_i$ by our formula is $a_{i,n}$, and the coefficient of $r_i^k$ is $$\frac{1}{k!}\prod_{i=1}^{k}\left(a_{i,n}-\sum_{j=1}^{k-1}(a_{i,i}-b_{i,i})\right)$$ which is $$\frac{1}{k!}\prod_{i=1}^{k}( a_{i,n}-k+1)$$ This is only 0 for large $k$ when $a_{i,n}$ is positive.
\end{proof}

This idea is complicated to generalize, since in large enough monomials, every part-contribution (the particular sequence we are summing over) need not be 0. However, there might be a clever way to group terms to deal with this.

Another thing we want to do with our formula is re-express it without the $\phi(w_1,\ldots,w_k)$ term. Our formula currently reads 
$$F_n = \sum_{\sf{w}\in W} \phi(w_1,\ldots,w_k) W(n, w_1,\ldots,w_k) \prod_{i=1}^{k}r_{w_i}$$ over all sequences $\sf{w}$ with $1\leq w_1 \leq \ldots \leq w_k\leq n$. This can be rewritten as $$F_n = \sum_{\sf{q}\in Q} \frac{1}{k!} W(n, \sigma(q_1,\ldots,q_k)) \prod_{i=1}^{k}r_{q_i}$$ over all sequences $\sf{q}$ with $1\leq q_1 ,\ldots, q_k\leq n$ (not increasing order), where $\sigma(q_1,\ldots,q_k)$ is the sequence permuted to be in increasing order. This is because the extra occurrences of each sequence $\sf{w}$ exactly account for the $\phi$ term. However, this formula isn't any more insightful than the the original one, because the $W$ term still involves permuting the sequence to be in increasing order. Our goal would be to find an explicit expression $W'(n, q_1,\ldots,q_k)$ so that $$F_n = \sum_{\sf{q}\in Q} \frac{1}{k!} W'(n, q_1,\ldots,q_k) \prod_{i=1}^{k}r_{q_i}.$$ In particular, we want an explicit formula for $W'(n, q_1,\ldots,q_k)=W(n, \sigma(q_1,\ldots,q_k))$ that does not involve first permuting the sequence $\sf{q}$ into increasing order.

Another possible avenue of research would be to see how our work relates to dilogarithm identities. Keller \cite{K2} and Nakanashi \cite{N} discuss these identities, and they appear to be an area where explicit formulas may allow for better results.

\subsection{Deformed $F$-Polynomials}

Regarding deformed $F$-polynomials, or $S$-polynomials we have the following three conjectures.

\begin{conjecture}
Given a framed quiver $Q$ and a mutation sequence $v_1,v_2,\ldots$, $S_n$ is actually a polynomial; that is none of the monomial terms have negative exponents.
\end{conjecture}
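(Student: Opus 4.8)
The plan is to deduce the conjecture from Conjecture \ref{basiccon}, whose sufficiency is already established in the excerpt: every monomial of $S_n$ is a product of basic monomials, the action of $-C_n^{-1}$ on exponent vectors is linear, and the coefficients of $F$-polynomials are nonnegative by the positivity theorem \cite{GHKK}. Hence if each basic monomial $p\in P_n$ satisfies $-C_n^{-1}(p)=\langle cf(p,V_1),\ldots,cf(p,V_v)\rangle$, then each such $p$ deforms to a nonnegative exponent vector, and every monomial of $S_n$, being a product of basics, deforms to a sum of nonnegative vectors. So the whole problem collapses to proving the basic-monomial coefficient identity of Conjecture \ref{basiccon}, the unfactored analogue of the fundamental-monomial lemma already proved in Section 5. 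The advantage of working with basic rather than fundamental monomials is exactly that the identity carries no $(g-r)$ prefactor, so it retains content precisely in the $g=r$ regime where the fundamental version degenerates to $0=0$ and where, as the excerpt notes, the direct method stalls.

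To prove Conjecture \ref{basiccon} I would induct on $n$, exploiting the locality of a single mutation. Since $C_n=A_1\cdots A_n$ with the $A_i$ involutions, we have $C_n^{-1}=A_nC_{n-1}^{-1}$, and left multiplication by $A_n$ alters only row $v_n$; thus, writing $\mathbf{f}=-C_{n-1}^{-1}(p)$, the deformed exponent $-C_n^{-1}(p)=A_n\mathbf{f}$ differs from $\mathbf{f}$ only in coordinate $v_n$, by an amount read directly off the arrows incident to $v_n$ encoded in $A_n$. On the polynomial side, the $n$th mutation changes only the label at $v_n$, through the exchange relation $V_{v_n}^{\mathrm{new}}V_{v_n}^{\mathrm{old}}=\prod_{v_n\succ j}V_j+r_n\prod_{v_n\prec j}V_j$, so only $cf(p,V_{v_n})$ moves. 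The inductive step therefore reduces to a single scalar comparison: that the change in the $v_n$-coordinate of $-C_n^{-1}(p)$ matches the change in $cf(p,V_{v_n})$.

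The hard part will be extracting $cf(p,V_{v_n}^{\mathrm{new}})$ from the exchange relation, because obtaining it requires dividing the right-hand side by $V_{v_n}^{\mathrm{old}}$. The Laurent phenomenon guarantees the quotient is a genuine polynomial, but tracking an individual coefficient through this division and identifying it with the explicit $A_n$-action on the exponent of $p$ is the real content; one must also verify that $p$ stays basic (that $P_n$ is controlled under mutation) and that the identity survives for monomials arising with cancellation. A promising way to manage the division is to clear the denominator and argue by the same exponent-by-exponent bookkeeping used in the proof of Theorem \ref{formula}, comparing the $e_s(\cdot)$ contributions on the two sides rather than the coefficients directly. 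If the division proves intractable, the fallback is the representation-theoretic route of Conjecture \ref{primecon}: identify $-C_n^{-1}(p)_i$ with the Euler--Poincar\'e characteristic of a quiver Grassmannian of the relevant mutation representation and invoke indecomposability of prime subrepresentations, though this appears to demand more machinery than the direct inductive comparison.
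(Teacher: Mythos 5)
The statement you are asked to prove is left as an open conjecture in the paper; the paper establishes it only for green mutation sequences (via Theorem \ref{posexps}) and otherwise only conditionally on Conjecture \ref{basiccon} or Conjecture \ref{primecon}. Your proposal correctly reproduces the conditional reduction --- every monomial of $S_n$ factors into basics, $-C_n^{-1}$ acts linearly on exponent vectors, and positivity of $F$-polynomial coefficients then forces nonnegative exponents --- and this matches the paper's own ``If Conjecture \ref{basiccon} is true, then $S_n$ is a polynomial'' theorem essentially verbatim. But the reduction is not a proof: Conjecture \ref{basiccon} is itself open, and your sketch of it stops exactly where the difficulty lives. You yourself flag that extracting $cf(p,V_{v_n}^{\mathrm{new}})$ from the exchange relation after dividing by $V_{v_n}^{\mathrm{old}}$ is ``the real content,'' and no argument is supplied for that step; the fallback via Conjecture \ref{primecon} is likewise an appeal to another unproven statement.

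There is also a structural obstruction to the induction as you set it up. The identity $-C_n^{-1}(p)=\left<cf(p,V_1),\ldots,cf(p,V_v)\right>$ is asserted only for $p\in P_n$, and a monomial that is basic at step $n$ need not be basic at step $n-1$: for such $p$ every $cf(p,V_i)$ vanishes at step $n-1$, while $-C_{n-1}^{-1}(p)$ cannot be the zero vector since $C_{n-1}$ is invertible. So the inductive hypothesis is unavailable (indeed false) precisely for the newly created monomials, which are the ones the inductive step must handle; any successful induction must treat the first appearance of a monomial by a separate mechanism, and that is exactly where the $g=r$ degeneration you mention actually bites. As it stands the proposal is a plausible research plan consistent with the paper's own partial results, but it does not close the conjecture.
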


\begin{conjecture}
Given a framed quiver $Q$ and a mutation sequence $v_1,v_2,\ldots$, $S_n$ is bounded; i.e. for each monomial term $\prod x_i^{a_i}$, there is some $c(a_1,\ldots,a_n)$ dependent on $Q$ and the mutation sequence so that the coefficient of $\prod x_i^{a_i}$ in $S_n$ is less than $c(a_1,\ldots,a_n)$.
\end{conjecture}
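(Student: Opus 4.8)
The plan is to reduce the statement to a claim about coefficients of the ordinary $F$-polynomials and then to bound those. Since each $A_i$ satisfies $\det A_i=-1$, the $C$-matrix has $\det C_n=\pm 1$, so the exponent map $\mathbf{e}\mapsto -C_n^{-1}\mathbf{e}$ is a bijection of $\Z^v$. Consequently every monomial of $S_n=-C_n^{-1}(F_n)$ is the image of a unique monomial of $F_n$ carrying the same coefficient: if $m$ has exponent vector $\mathbf{f}$, then the coefficient of $m$ in $S_n$ equals the coefficient of the single monomial $x^{-C_n\mathbf{f}}$ in $F_n$, which by positivity of $F$-polynomial coefficients \cite{GHKK} is a nonnegative integer. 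Thus the boundedness conjecture is equivalent to showing that, for each fixed $\mathbf{f}$, the coefficients of the (generally high-degree and $n$-dependent) monomials $x^{-C_n\mathbf{f}}$ stay bounded as $n\to\infty$. Note that this reduction already gives integrality and non-negativity, but not boundedness, since the coefficient of a monomial of growing degree could a priori grow.

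To get a handle on this integer I would expand it with the corollary to Theorem \ref{formula} (equivalently, apply Theorem \ref{formula} to the preimage monomial $x^{-C_n\mathbf{f}}$), writing the coefficient of $m$ in $S_n$ as $\sum_{\mathbf{w}} \phi(\mathbf{w})\,W(n,\ldots)$, the sum ranging over nondecreasing sequences $\mathbf{w}$ whose deformed factors satisfy $\prod_i -C_n^{-1}(r_{w_i})=m$. The first key step is a general \emph{bounded-window} lemma: there should be a constant $c=c(m,Q)$, independent of $n$, so that every contributing sequence has $w_i\ge n-c$ for all $i$. This is exactly the mechanism used in the convergence theorem of Section 6, where the window lemma was proved for periodic green sequences from two facts: that $-C_n^{-1}(r_i)$ is a monomial with positive exponents, and that the $r_i$ are pairwise distinct. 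I would try to replace these green-specific inputs by a growth estimate valid in general: pairing $-C_n^{-1}(r_i)$ with a suitable positive functional (for instance one built from $g$-vectors via the sign-coherence duality), show that the ``deformed size'' of the factor contributed by step $i$ grows without bound as $n-i\to\infty$, so that only boundedly many such factors can fit inside the fixed monomial $m$.

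Granting the window, the set of contributing multisets $\mathbf{w}$ is drawn from $\{n-c,\ldots,n\}$ and its length is controlled by the degree of $m$, so its cardinality is bounded independently of $n$. It then remains to bound each summand $\phi(\mathbf{w})\,W(n,\ldots)$. Here I would use the reduction of the first paragraph in reverse: the entire sum is a single nonnegative integer, which already rules out unbounded negative growth, and an upper bound should be extractable from the divisibility constraint, since each deformed factor $-C_n^{-1}(r_{w_i})$ must divide $m$, limiting both the distinct factors that can appear and their multiplicities, and hence (through the extended binomial coefficients appearing in $W$) the size of each term.

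I expect the main obstacle to be precisely the generalization of the window lemma and this final bound in the non-green, non-periodic setting. Outside the green case $-C_n^{-1}(r_i)$ need not be a positive monomial and the $r_i$ need not be distinct, so the clean ``distinct factors of growing degree'' picture breaks down; outside the periodic case the local matrices $C_{n-j,n}$ for $j\le c$ vary with $n$, and because the number of arrows in the quiver can grow along the mutation sequence the individual $W$-terms need not be bounded, forcing the argument to rely on cancellation. Reconciling the manifest non-negativity and single-integer structure coming from positivity with a genuinely \emph{uniform} upper bound is, I believe, the crux of the conjecture.
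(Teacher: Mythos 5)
This statement is an open conjecture in the paper: the paper offers no proof of it, remarking only that one would need to extend the periodic-green convergence argument of Section 6 by removing both the ``periodic'' and the ``green'' hypotheses --- which is essentially the plan you sketch. Your first-paragraph reduction is sound: since each $A_i$ has determinant $-1$, the map $\mathbf{e}\mapsto -C_n^{-1}\mathbf{e}$ is injective on $\Z^v$, so the deformation permutes monomials and preserves coefficients, and the coefficient of a fixed monomial of $S_n$ is the (nonnegative, by positivity) coefficient of an $n$-dependent monomial of $F_n$. But the two steps that would actually yield boundedness are precisely the ones you leave as intentions rather than arguments, and both fail for reasons you partly anticipate. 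First, the bounded-window lemma: the paper's proof of the window uses that all the $r_i$ are pairwise distinct and that each $-C_n^{-1}(r_i)$ is a monomial with positive exponents, and both facts are established only for green sequences; outside the green case (cf.\ Corollary \ref{rgfails}) a deformed factor $-C_n^{-1}(r_{w_i})$ can have negative exponents, so the number of factors in a contributing sequence is no longer controlled by the degree of $m$, and your proposed replacement via a ``positive functional built from $g$-vectors'' is not constructed. Second, even granting a window of width $c$, the individual summands $\phi(\mathbf{w})\,W(n,\ldots)$ involve entries of $C_{n-j,n}$ for $j\le c$, and without periodicity these can grow with $n$ for fixed offset $j$; nonnegativity of the total sum gives only the lower bound $0$ and says nothing about an upper bound, so the appeal to ``the entire sum is a single nonnegative integer'' does not close this. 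As a research plan your proposal is reasonable and consistent with the route the paper itself suggests, but it is not a proof; the crux you identify at the end is exactly what remains open.
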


\begin{conjecture}
Given a framed quiver $Q$ and a periodic mutation sequence $v_1,\ldots,v_p,v_1,\ldots,v_p,\ldots$, such that performing the mutations $v_1,\ldots,v_p$ on the base quiver returns the quiver back to the original, for every $i$, the sequence $S_i, S_{i+p}, S_{i+2p},\ldots$ converges. 
\end{conjecture}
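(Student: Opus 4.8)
The plan is to follow the architecture of the convergence theorem proved earlier for green quivers, isolate the two places where greenness was used, and replace them. Fix a residue $a$ and restrict to $n\equiv a \pmod p$; it suffices to prove that the coefficient of each fixed Laurent monomial $m$ in $S_n$ is eventually constant along this class. Applying the deformed $F$-polynomial formula (the corollary to Theorem~\ref{formula}) and indexing every contributing mutation by its distance $w$ from the terminal step $n$, this coefficient is a sum over nondecreasing sequences $0\le w_1\le\cdots\le w_k$ whose deformed monomials multiply to $m$, of $\phi(\sf{w})\,W(n,n-w_k,\ldots,n-w_1)$, where the deformed monomial at distance $w$ has exponent vector $\mu_w:=\delta_{n-w,n}\,C^{-1}_{n-w,n}\,e_{v_{n-w}}$.

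First I would settle the ingredients that can be made color-free. Since $v_1,\ldots,v_p$ returns the base quiver to itself, the arrows are periodic with period $p$, and hence---provided the coloring of the mutations is eventually periodic---the matrices $A_{n-p+1},\ldots,A_n$ over one period, the relevant $C^{-1}_{n-w,n}$, the mutation vertices $v_{n-w}$, and the colors of the $r_{n-w}$ all depend only on residues modulo $p$ for $n$ large. Granting this, $\mu_w$ is independent of $n$, and every factor $a_{n-w_i,n}$, $a_{n-w_i,n-w_j}$, $b_{n-w_i,n-w_j}$ appearing in $W$ stabilizes, so $W(n,n-w_k,\ldots,n-w_1)\to\widetilde W(w_1,\ldots,w_k)$, a constant. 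In the green case the coloring is trivially periodic (all green), which is exactly why the green proof could invoke this directly; in general the eventual periodicity of the coloring pattern is already a nontrivial point that I would have to establish from sign-coherence and the c-vector dynamics.

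Everything then reduces to a uniform support bound: a constant $c=c(m)$ such that only distances $w\le c$ occur in sequences contributing to $m$. Given it, the coefficient of $m$ in $S_n$ is the fixed finite sum $\sum\phi(\sf{w})\,\widetilde W(\sf{w})$ over sequences with $w_j\le c$ and $\prod_j\mu_{w_j}=m$, identical for all large $n$ in the class, so $S_a,S_{a+p},\ldots$ converges. This is the step where greenness was essential. In the green setting one uses that the $\mu_w$ are nonnegative and that the $r_i$ are distinct, so the $\mu_w$ are distinct lattice points in the nonnegative orthant; as only finitely many such points have bounded coordinate sum and each factor dividing $m$ must, the bound follows. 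Both inputs fail here: Corollary~\ref{rgfails} shows red-heavy fundamentals force some $\mu_w$ to have negative entries, and the $r_i$ may repeat. I would instead use a dynamical height. Once the coloring is periodic, the phase-aligned c-vectors underlying $\mu_0,\mu_p,\mu_{2p},\ldots$ run through an orbit of a fixed monodromy $\Phi\in GL_v(\Z)$ (the one-period $C$-matrix composed with the cyclic relabeling). The dichotomy is then clean: either this orbit escapes every bounded region of $\Z^v$, in which case only finitely many $w$ yield a $\mu_w$ small enough to divide $m$ and the support bound holds; or the orbit is bounded, hence---being integral---finite and eventually periodic in $w$.

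The bounded-orbit case is the main obstacle, and the regime where the statement itself is most delicate. There infinitely many distances realize the same finitely many lattice points, the support is genuinely unbounded, and the coefficient of $m$ is a priori an infinite sum; moreover, if the c-vector dynamics is periodic of period $pd$ with $d>1$ while the quiver has period $p$, one would expect $S_a,S_{a+p},\ldots$ to cycle through $d$ values rather than converge. I would attack this by showing that in the bounded regime the entire decorated mutation data is eventually periodic of some period $pd$, refining to classes modulo $pd$ on which $C_n$ and $F_n$ (hence $S_n$) are eventually constant, and then proving that the $d$ values seen along a class modulo $p$ must in fact coincide---presumably through a cancellation among the $\widetilde W$-weights attached to the recurrent distances, or by showing the stated hypotheses force $d=1$. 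Producing this coincidence, or the dynamical mechanism behind it, is the part I expect to require input genuinely beyond the green argument.
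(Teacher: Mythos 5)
You should first note that the paper does not actually prove this statement: it appears in the ``Future Research'' section precisely because it is open, and the paper's own text says the green-quiver argument ``do[es] not extend directly'' to it. So the only thing to compare against is the theorem on convergence for periodic \emph{green} quivers, which your proposal correctly identifies as the template. Your plan is a reasonable reconstruction of that template, but it is a plan with two genuine, unclosed gaps, both of which you candidly flag yourself. First, the eventual periodicity of the coloring: the hypothesis only says the \emph{base quiver} returns to itself after one period, not the framed quiver, so $C_p\neq I$ in general and the green/red pattern in later periods is governed by the evolving $C$-matrix, not by the base quiver's period. Establishing that this pattern is eventually periodic mod $p$ (or mod some multiple) is itself a nontrivial dynamical statement that you assume rather than prove; without it, none of the quantities $\mu_w$, $a_{n-w_i,n}$, $b_{n-w_i,n-w_j}$ stabilize and the whole architecture collapses.

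Second, and more seriously, the support bound. In the paper's green argument the bound comes from two facts that you correctly observe fail here: the deformed exponent vectors $-C_n^{-1}(r_i)$ are nonnegative, and the $r_i$ are pairwise distinct. Your proposed replacement via the monodromy dichotomy does not actually repair this. In the escaping-orbit branch, the claim that only finitely many $w$ can contribute to a fixed monomial $m$ implicitly uses a divisibility argument (``each factor dividing $m$ must have bounded coordinate sum''), which is exactly the nonnegativity that is unavailable: once the $\mu_w$ may have negative entries, arbitrarily long products with large individual norms can still cancel down to $m$, so escape of the orbit does not by itself bound the support. In the bounded-orbit branch you acknowledge you have no argument, and indeed you raise the legitimate possibility that the sequence $S_a, S_{a+p},\ldots$ only cycles through $d>1$ values, which would require either a cancellation mechanism or a proof that $d=1$ -- neither of which is supplied. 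So the proposal is an honest roadmap that locates the difficulties accurately, but it does not constitute a proof, and the statement remains a conjecture both in the paper and after your attempt.
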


We proved the first and third in the case of green quivers. The first is very related to the coefficients in our formula being positive. Recall that our proof of positivity in the green case relied on positivity of coefficients. It might be possible to generalize this, using our formula to understand the coefficient of ``basic'' terms via the formula. This would in turn prove Conjecture \ref{basiccon}, thus proving the result. We might also be able to achieve a direct proof using a concept like in Lemma \ref{posex}.

We have not spent much time investigating the second conjecture. We need to extend the proof used for convergence in the periodic green case in two ways. We need to remove the condition periodic and the condition green. It would be especially interesting and probably doable to remove the periodic condition.

For the third conjecture, again, we have proven it in the case of green quivers. The arguments do not extend directly from the green case, though a proof is likely similar to a full proof of the previous conjectures.

\section{Acknowledgments}
This research was carried out as part of the 2018 Mathematics REU at University of Minnesota, Twin Cities. It was supported by the NSF RTG grant DMS-1745638. The author would like to thank mentor Gregg Musiker for his invaluable guidance and support, numerous read throughs and edits of the paper, and mathematical insight. The author would also like to thank TA Esther Banaian for her comments, as well as researchers Kyungyong Lee and Joe Buhler for reading and offering edits to the paper. Much of this research was also aided by the open source mathematical software Sage.

\medskip

\bibliographystyle{plain}
\bibliography{fpoly}

\begin{thebibliography}{10}

\bibitem{ADSS}
I.~{Assem}, G.~{Dupont}, R.~{Schiffler}, and D.~{Smith}.
\newblock {Friezes, Strings and Cluster Variables}.
\newblock {\em ArXiv e-prints}, September 2010.
\newblock arXiv:1009.3341.

\bibitem{DWZ}
H.~{Derksen}, J.~{Weyman}, and A.~{Zelevinsky}.
\newblock {Quivers with potentials and their representations I: Mutations}.
\newblock {\em ArXiv e-prints}, April 2007.
\newblock arXiv:0704.0649.

\bibitem{EF}
R.~{Eager} and S.~{Franco}.
\newblock {Colored BPS pyramid partition functions, quivers and cluster
  transformations}.
\newblock {\em Journal of High Energy Physics}, 9:38, September 2012.
\newblock arXiv:1112.1132.

\bibitem{FZ1}
S.~{Fomin} and A.~{Zelevinsky}.
\newblock {Cluster algebras I: Foundations}.
\newblock {\em ArXiv Mathematics e-prints}, April 2001.
\newblock arXiv:0104151.

\bibitem{FZ4}
S.~{Fomin} and A.~{Zelevinsky}.
\newblock {Cluster algebras IV: Coefficients}.
\newblock {\em ArXiv Mathematics e-prints}, February 2006.
\newblock arXiv:0602259.

\bibitem{GW}
M.~{Glick} and J.~{Weyman}.
\newblock {Gale-Robinson quivers: from representations to combinatorial
  formulas}.
\newblock {\em ArXiv e-prints}, October 2017.
\newblock arXiv:1710.09765.

\bibitem{GHKK}
M.~{Gross}, P.~{Hacking}, S.~{Keel}, and M.~{Kontsevich}.
\newblock {Canonical bases for cluster algebras}.
\newblock {\em ArXiv e-prints}, November 2014.
\newblock arXiv:1411.1394.

\bibitem{JMZ}
I.~Jeong, G.~Musiker, and S.~Zhang.
\newblock Gale-robinson sequences and brane tilings.
\newblock {\em Discrete Mathematics and Theoretical Computer Science}, 01 2013.

\bibitem{K2}
B.~{Keller}.
\newblock {On cluster theory and quantum dilogarithm identities}.
\newblock {\em ArXiv e-prints}, February 2011.
\newblock arXiv:1102.4148.

\bibitem{K}
B.~{Keller}.
\newblock {Cluster algebras and derived categories}.
\newblock {\em ArXiv e-prints}, February 2012.
\newblock arXiv:1202.4161.

\bibitem{L}
K.~Lee.
\newblock On cluster variables of rank two acyclic cluster algebras.
\newblock {\em Annals of Combinatorics}, 16(2):305--317, Jun 2012.

\bibitem{LS}
K.~Lee and R.~Schiffler.
\newblock A combinatorial formula for rank 2 cluster variables.
\newblock {\em Journal of Algebraic Combinatorics}, 37(1):67--85, Feb 2013.

\bibitem{LS2}
K.~{Lee} and R.~{Schiffler}.
\newblock {Positivity for cluster algebras}.
\newblock {\em ArXiv e-prints}, June 2013.
\newblock arXiv:1306.2415.

\bibitem{MS}
G.~Musiker and R~Schiffler.
\newblock Cluster expansion formulas and perfect matchings.
\newblock {\em Journal of Algebraic Combinatorics}, 32(2):187--209, Sep 2010.

\bibitem{MSW}
G.~{Musiker}, R.~{Schiffler}, and L.~{Williams}.
\newblock {Positivity for cluster algebras from surfaces}.
\newblock {\em ArXiv e-prints}, June 2009.
\newblock arXiv:0906.0748.

\bibitem{N2}
K.~Nagao.
\newblock Donaldson-thomas theory and cluster algebras.
\newblock {\em ArXiv e-prints}, May 2011.
\newblock arXiv:1002.4884.

\bibitem{N}
T.~{Nakanishi}.
\newblock {Tropicalization method in cluster algebras}.
\newblock {\em ArXiv e-prints}, October 2011.
\newblock arXiv:1110.5472.

\bibitem{ST}
R.~{Schiffler} and H.~{Thomas}.
\newblock {On cluster algebras arising from unpunctured surfaces}.
\newblock {\em ArXiv e-prints}, December 2007.
\newblock arXiv:0712.4131.

\bibitem{Z}
G.~Zhang.
\newblock Stable cluster variables.
\newblock {\em ArXiv e-prints}, October 2018.
\newblock arXiv:1810.03699.

\end{thebibliography}

\end{document}